\DeclareMathOperator*{\supess}{ess\,sup}
\title[1-d diatomic Vlasov-Poision system]{one-dimensional diatomic Vlasov-Poisson system with oscillatory molecular bonds}
\author[Sun-Ho Choi]{Sun-Ho Choi}
\address[Sun-Ho Choi]{Department of Applied Mathematics and the Institute of Natural Sciences, Kyung Hee University, Yongin, 446-701, Republic of Korea}
\email{sunhochoi@khu.ac.kr}
\author[Seok-Bae Yun]{Seok-Bae Yun}
\address[Seok-Bae Yun]{Department of Mathematics, Sungkyunkwan University, Suwon 440-746, Republic of Korea}
\email{sbyun01@skku.edu}
\begin{document}
\newtheorem{theorem}{Theorem} [section]
\newtheorem{maintheorem}{Theorem}
\newtheorem{lemma}[theorem]{Lemma}
\newtheorem{proposition}[theorem]{Proposition}
\newtheorem{remark}[theorem]{Remark}
\newtheorem{example}{Example}
\newtheorem{exercise}{Exercise}
\newtheorem{definition}{Definition}[section]
\newtheorem{corollary}[theorem]{Corollary}


\newcommand{\noi}{\noindent}
\newcommand{\Z}{\mathbb{Z}}
\newcommand{\R}{\mathbb{R}}
\newcommand{\C}{\mathbb{C}}
\newcommand{\T}{\mathbb{T}}
\newcommand{\bul}{\bullet}

\newcommand{\N}{\mathcal{N}}
\newcommand{\RR}{\mathcal{R}}
\newcommand{\D}{\mathcal{D}}
\newcommand{\HH}{\mathcal{H}}

\newcommand{\al}{\alpha}
\newcommand{\dl}{\delta}
\newcommand{\Dl}{\Delta}
\newcommand{\eps}{\varepsilon}
\newcommand{\kk}{\kappa}
\newcommand{\g}{\gamma}
\newcommand{\G}{\Gamma}
\newcommand{\ld}{\lambda}
\newcommand{\Ld}{\Lambda}
\newcommand{\s}{\sigma}
\newcommand{\ft}{\widehat}
\newcommand{\wt}{\widetilde}
\newcommand{\cj}{\overline}
\newcommand{\dx}{\partial_x}
\newcommand{\dt}{\partial_t}
\newcommand{\dd}{\partial}
\newcommand{\invft}[1]{\overset{\vee}{#1}}
\newcommand{\lrarrow}{\leftrightarrow}
\newcommand{\embeds}{\hookrightarrow}
\newcommand{\LRA}{\Longrightarrow}
\newcommand{\LLA}{\Longleftarrow}

\newcommand{\wto}{\rightharpoonup}

\newcommand{\jb}[1]
{\langle #1 \rangle}

\newcommand{\kwon}[1]{{\color{blue} #1  }}


\renewcommand{\theequation}{\thesection.\arabic{equation}}
\renewcommand{\thetheorem}{\thesection.\arabic{theorem}}
\renewcommand{\thelemma}{\thesection.\arabic{lemma}}
\newcommand{\bbr}{\mathbb R}
\newcommand{\bbz}{\mathbb Z}
\newcommand{\bbn}{\mathbb N}
\newcommand{\bbs}{\mathbb S}
\newcommand{\bbp}{\mathbb P}
\newcommand{\ddiv}{\textrm{div}}
\newcommand{\bn}{\bf n}
\newcommand{\rr}[1]{\rho_{{#1}}}
\newcommand{\thh}{\theta}
\def\charf {\mbox{{\text 1}\kern-.24em {\text l}}}
\renewcommand{\arraystretch}{1.5}

\keywords{Vlasov-Poisson equation, diatomic molecules, oscillatory bonding force, polyatomic plasma, global existence}
\thanks{
\textbf{Acknowledgment.}S.-H. Choi was partially supported by NRF of Korea (no. 2017R1E1A1A03070692) and Korea Electric Power Corporation(Grant number: R18XA02). S.-B. Yun was supported by Samsung Science and Technology Foundation under Project Number SSTF-BA 1801-02.
}

\begin{abstract}In this paper, we derive a Vlasov type kinetic model for diatomic plasma in which each ion consists of two atoms bonded through an oscillatory intermolecular force given by a singular Hooke potential. We then consider the existence and uniqueness of the classical solution to the proposed model. A careful analysis of the oscillatory behavior of atoms near singular points of the Hooke potential is carried out to extend the local-in-time solution to the global solution. 
\end{abstract}

\maketitle


%
%

\section{Introduction}\label{intro}
\setcounter{equation}{0}


It is well known that the presence of strong magnetic fields causes significant changes in important transport coefficients for polyatomic molecules due to the precession of magnetic moments in the polyatomic molecules (i.e., the Senftleben-Beenakker effect \cite{K-R,T-C-T-B}). Considering that the control of plasma dynamics  is  often realized through the use of magnetic fields \cite{A-A}, this suggests that the internal structure of the molecules must be taken  into consideration to better understand plasma. Another well-known feature of polyatomic gases is the complicated relation of the specific heat capacity  with the temperature compared to that of monatomic gases. At  relatively low temperatures, a polyatomic gas behaves like a monatomic gas, but as the temperature rises, the rotational modes and vibrational modes are excited and the specific heat capacity increases accordingly, resulting in a step-wise temperature-capacity relation.

These considerations indicate the need for a polyatomic version of the kinetic model for dilute polyatomic plasma.
However, unlike  Boltzmann type or BGK type polyatomic kinetic equations, whose first models  can be traced back to 1894 \cite{Bryan} (See \cite{CC}), Vlasov type equations modelling polyatomic plasma have never been suggested in the literature, to the best of our knowledge. 

In the current work, as an initial step toward modelling   such a kinetic model for polyatomic plasma,
we introduce a variable extended Vlasov-Poisson equation describing the evolution of the statistical distribution of diatomic plasma in a one-dimensional configuration. This  oversimplifies the situation, but can be justified when the plasma are aligned in a strong magnetic field. We note that Degond and Liu have studied the variable extended Vlasov equation to describe  the motion of dumbbell-like polymers \cite{D-L}, and they also considered the hydrodynamic limit problem for the polymer model. We also refer to \cite{B-C-H} for statistical mechanics approaches.

\subsection{Vlasov-Poisson system for diatomic molecules with oscillatory bonds}
We now derive our model  describing the dynamics of plasma consisting of diatomic molecules, where oscillatory bonds are formed between the atoms in the molecules.
Let $x$ and $v$ denote the center of mass of the linear molecule and the translational velocity of the molecule, respectively. We also introduce $\omega$  to denote
the relative distance of the atom in the molecule from the center of mass of the molecule, and $\eta$ to denote the relative rate of change of the distance of the atom from the center of mass.
In view of this, we introduce the distribution function defined on the phase space of $(x,v,\omega,\eta)\in \times\mathbb{R}\times \mathbb{R}\times (0,\epsilon)\times \mathbb{R}$  at time $t\geq0$.
To present our model, we first introduce the local density:
\begin{align*}
\rho(t,x)=\int_{\bbr \times (0,\epsilon)\times \bbr}2f(t,x,v,\omega,\eta)dvd\omega d\eta,
\end{align*}
and the strength of fields that each atom in the molecule at the position $x\in\mathbb{R}$ experiences is given by:
\[
F(t,x)=-\frac{1}{2}\left\{\int_{-\infty}^x\rho(t,y)dy-\int^{\infty}_x\rho(t,y)dy\right\}.
\]

We then postulate that the dynamics of the molecules in the plasma is determined by the following factors.
\begin{enumerate}
\item The translational motion of the center of mass is caused the net effect of fields on the atoms in the molecules;
\begin{align*}
\frac{dx}{dt}=v,\qquad\frac{dv}{dt}=F(t,x-\omega)+F(t,x+\omega).
\end{align*}
\item The change of displacement of each atom with respect to the center of mass is explained by the combined effect of stretching due the external force differences and oscillatory bonding forces;
\begin{align*}
\frac{d\omega}{dt}=\eta,\qquad\frac{d\eta}{dt}=\left\{F(t,x-\omega)-F(t,x+\omega)\right\}+F^h(\omega).
\end{align*}
\end{enumerate}
The situation we want to model by the oscillatory bonding forces is that   atoms attract each other when they are far apart so that the atoms can be combined to form a molecule, but repel when they come too close together to keep it from collapsing. For simplicity, we assume throughout this paper that the oscillatory force $F^h(\omega): (0,\epsilon)\rightarrow\mathbb{R}$ satisfies:
\begin{enumerate}
\item[(H1)] $F^h(\omega)$ is monotone decreasing,\\ \phantom{a}such that $F^h(\omega)\to \infty $ as $\omega\to 0+$ and  $F^h(\omega)\to -\infty $ as $\omega\to \epsilon-$.
\item[(H2)] $F^h(\epsilon/2)=0$.
\item[(H3)] Symmetry of $F^h(\omega)$:  $F^h(\omega)$ is an odd function with respect to $\omega=\epsilon/2$.
\item[(H4)] Convexity  of $F^h(\omega)$: $F^h(\omega)$ is convex on $(0,\epsilon/2)$ and concave on $(\epsilon/2,\epsilon)$.
\end{enumerate}
For technical reasons, we also assume that $F^h(\omega)$ is a $C^2$-function.  
One such example is given by
\[
F^h(\omega)=-\tan \frac{\pi}{\epsilon}\Big(\omega-\frac{\epsilon}{2}\Big).
\]

The corresponding   one-dimensional diatomic Vlasov-Poisson system with oscillatory molecular bonds reads
\begin{align} \label{pVP}
(pVP) \left\{\begin{aligned}
 &\partial_t f + v  \partial_x f + F^+
\partial_v f+\eta\partial_\omega f+ (F^-+F^{h}) \partial_\eta f  =0,
 \\&F^{\pm}(t,x,\omega)=\partial_x\phi(t,x+ \omega)\pm\partial_x\phi(t,x- \omega),
 \\&F^h=F^h(\omega),
 \\&-\partial^2_x \varphi(t,x)  = \rho,  \quad \rho(t,x) = \int_{\bbr}\int_{(0,\epsilon)}\int_{\bbr}2f  dvd\omega d\eta,\\
&f(0,x,v,\omega,\eta) = f_0(x,v,\omega,\eta) \ge 0.
\end{aligned}\right.
\end{align}
Here, $f=f(t,x,v,\omega,\eta)$ denotes the particle distribution function at time $t\geq0$ on a phase point $(x,v,\omega,\eta)$ in  $(x,v)\in  \bbr\times \bbr$ and $(\omega,\eta)\in (0,\epsilon)\times \bbr $. The force fields that cause  translational and vibrational motions are given by
\begin{align}
\begin{aligned}\nonumber
\rho(t,x)&=\int_{\bbr \times (0,\epsilon)\times \bbr}2f(t,z)dvd\omega d\eta,\\
F(t,x)&=-\frac{1}{2}\left\{\int_{-\infty}^x\rho(t,y)dy-\int^{\infty}_x\rho(t,y)dy\right\},\\
F^{\pm}(t,x,\omega)&=F(t,x+ \omega)\pm F(t,x- \omega).
\end{aligned}
\end{align}


\subsection{Main theorem} We present our main result on existence of the global in time solution for (\ref{pVP}).
We first need to set up some notational conventions.\newline
\phantom{a}
{\bf Notation:} \begin{itemize}
\item We denote the whole phase space by
\[\mathcal{D}=\bbr\times \bbr\times(0,\epsilon)\times \bbr. \]

\item Let f and $\rho$ be nonnegative real valued functions defined on the whole phase space $(x,v, \omega,\eta) \in \mathcal{D}$ and  the one-dimensional real line $\bbr$, respectively. Then, we set
\begin{eqnarray*}
\|f(t)\|_{L^p_{z}}&=&\bigg(\int_\bbr\int_\bbr\int_{(0,\epsilon)}\int_\bbr |f(t,x,v,\omega,\eta)|^pd\eta d\omega dvdx\bigg)^{1/p}, \\
\|\rho(t)\|_{L^p_{x}}&=&\bigg(\int_\bbr|\rho(t,x)|^pdx\bigg)^{1/p},
\end{eqnarray*}
for $p\geq 1$.
\item For measurable functions $f(t,x,v,\omega,\eta)$, $F(t,x)$ and $F^\pm(t,x,\omega)$, we define
\begin{eqnarray*}
\|f(t)\|_{L^\infty_{z}}&=&\supess_{(x,v,\omega,\eta)\in \mathcal{D}}|f(t,x,v,\omega,\eta)|,\\
\|F(t)\|_{L^\infty_{x}}&=&\supess_{x\in \bbr}|F(t,x)|, \\
\|F^\pm(t)\|_{L^\infty_{x,\omega}}
&=&\supess_{(x,\omega)\in\bbr\times\bbr}|F^\pm(t,x,\omega)|. \end{eqnarray*}
\end{itemize}
We are ready to state our main theorem:
\begin{theorem}\label{main result}Let $\mathcal{D}=\bbr\times\bbr\times(0,\epsilon)\times\bbr$ and $\mathring{f}\in C^1_c(\mathcal{D})$  be a nonnegative function. Then there exists a unique classical solution $f(t,x,v,\omega,\eta)\in C^1((0,\infty)\times\mathcal{D})$ to \eqref{pVP}. Moreover, $f(t)$ has  compact support for all $t\in (t,\infty)$ and $f(t)\geq 0$, and
\[\|f(t)\|_{L^p_{z}}=\|\mathring{f}\|_{L^p_{z}},\quad t\geq 0\]
  for $p\geq 1$ or $p=\infty$.
\end{theorem}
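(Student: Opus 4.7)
The plan is to combine the classical method of characteristics for Vlasov--Poisson with a quasi-energy argument that controls the oscillatory variable $\omega$ away from the singular endpoints $0$ and $\epsilon$. I first set up characteristic curves $(X,V,W,N)(t;s,x,v,\omega,\eta)$ associated with the transport part of \eqref{pVP}, and build a sequence of approximations $f^{(n)}$ by freezing $F^\pm$ at the $n$-th iterate, solving the characteristic ODE, and pushing forward $\mathring f$. Since $F^h\in C^2((0,\epsilon))$, on any compact subinterval $[\omega_-,\omega_+]\subset(0,\epsilon)$ the characteristic field is Lipschitz, so a standard contraction argument yields a local-in-time $C^1$ solution together with the conservation $\|f(t)\|_{L^p_z}=\|\mathring f\|_{L^p_z}$ (the drift has zero divergence in $(x,v,\omega,\eta)$, so the flow is measure-preserving).

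Next I would establish uniform a priori bounds on the $x$ and $v$ components of the support. Mass conservation gives $\|\rho(t)\|_{L^1_x}=\|\mathring f\|_{L^1_z}$, and since $F$ is essentially half a signed integral of $\rho$ we obtain $\|F(t)\|_{L^\infty_x}\le \tfrac12\|\mathring f\|_{L^1_z}$, hence $\|F^\pm(t)\|_{L^\infty_{x,\omega}}\le \|\mathring f\|_{L^1_z}$. In particular, $|\dot V|\le C$ and $|\dot X|=|V|$ along characteristics, so $|V(t)|$ and $|X(t)|$ grow at most linearly in $t$ starting from the compact support of $\mathring f$. This is the easy part and already gives compact support in $(x,v)$ on any finite time interval.

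The main obstacle is controlling $W(t)$ so it stays inside a compact subinterval of $(0,\epsilon)$, since otherwise $F^h(W)$ and its derivative blow up. Here I use the Hooke-type potential $U(\omega):=-\int_{\epsilon/2}^\omega F^h(s)\,ds$, which, by (H1)--(H3), is nonnegative, symmetric about $\epsilon/2$, and diverges to $+\infty$ at both endpoints. Along characteristics,
\begin{equation*}
\frac{d}{dt}\Bigl(\tfrac12 N^2+U(W)\Bigr)=N\cdot F^-(t,X,W),
\end{equation*}
and with the uniform bound $|F^-|\le\|\mathring f\|_{L^1_z}$ this yields
\begin{equation*}
\frac{d}{dt}\Bigl(\tfrac12 N^2+U(W)\Bigr)\le C\bigl(1+\tfrac12 N^2+U(W)\bigr),
\end{equation*}
so Grönwall gives a time-dependent bound $\tfrac12 N^2+U(W)\le G(t)$. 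Because $U\to\infty$ at $0$ and $\epsilon$, this bound translates into a quantitative statement $W(t)\in[\omega_-(t),\omega_+(t)]\subset(0,\epsilon)$ with $\omega_\pm(t)$ staying strictly inside $(0,\epsilon)$ on any finite interval, and simultaneously yields a bound on $|N(t)|$. This is the step where hypotheses (H1)--(H4) and the $C^2$ regularity of $F^h$ are essential, and it is the only place where the singular structure really enters.

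With these uniform bounds on the support of $f(t)$ in $(x,v,\omega,\eta)$ and on $F^h, (F^h)'$ along the supporting set, the system reduces to a Vlasov--Poisson type equation with globally Lipschitz coefficients on the effective domain. Standard Grönwall estimates on $\|\partial_x\rho\|_{L^\infty}$ and $\|\nabla_z f\|_{L^\infty}$ (as in the one-dimensional Vlasov--Poisson theory) then extend the local $C^1$ solution to $t\in[0,\infty)$ and give uniqueness by a Gronwall estimate on the difference of two solutions in characteristic form. Nonnegativity and the $L^p$-conservation propagate automatically by the measure-preserving property of the flow, completing the proof.
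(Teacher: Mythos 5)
Your proposal is correct, and the heart of it — the control of $(\Omega,H)$ away from the singular endpoints — is a genuinely different and considerably simpler route than the paper's. The paper also starts from the energy identity
\[
\frac{d}{dt}\Bigl(\tfrac12 H^2+U(\Omega)\Bigr)=H\,F^-(t,X,\Omega),
\qquad U(\omega)=-\int_{\epsilon/2}^{\omega}F^h(s)\,ds,
\]
(this is exactly Lemma~\ref{lemma 5.1}), but from there it carries out an excursion-by-excursion analysis: it partitions time into a ``chaotic'' regime where $\Omega\in[\Omega_m,\Omega_M]$ and a rough linear bound on $|H|$ suffices (Lemma~\ref{lemma 5.9}), and an ``almost autonomous'' regime near the endpoints where it bounds, via Lemmas~\ref{lemma 5.2}--\ref{lemma 5.7}, the increment of $|H|$ per excursion and the minimum excursion duration, and then sums. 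You instead close a Gr\"onwall inequality on the single Lyapunov quantity $E=\tfrac12 H^2+U(\Omega)$: since $U\ge 0$ under (H1)--(H3), one has $|H|\le\sqrt{2E}$ and hence $|\dot E|\le \|F^-\|_{L^\infty}\sqrt{2E}$, so $\sqrt{E(t)}$ grows at most linearly (your weaker form $\dot E\le C(1+E)$, giving an exponential bound, is also enough). Because $U$ diverges at $0$ and $\epsilon$, a bound on $E$ immediately confines $\Omega$ to a compact subinterval and bounds $|H|$, which is precisely what the continuation criterion of Theorem~\ref{local} requires. Your argument is shorter, needs only (H1)--(H3) for this step (the convexity hypothesis (H4) is not used), and in the sharp version gives the same linear-in-$T$ growth of $|H|$ that the paper obtains by its excursion counting; what the paper's finer decomposition buys is a more descriptive picture of the oscillatory dynamics (stopping times, return times, per-excursion drift of speed), but it is not needed for the existence theorem. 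For the local existence and propagation of $C^1$ regularity you follow essentially the same Picard iteration and derivative Gr\"onwall estimates as Section~\ref{sec3}, so that part is standard and matches the paper. One small caveat: your local existence sketch should be careful that the iterates' $\Omega$-supports stay uniformly inside $(0,\epsilon)$ on the short time interval before any a priori energy bound is available; the paper handles this with Proposition~\ref{prop 4.3}, and your write-up should include an analogous short-time continuity argument rather than take it for granted.
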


For the local existence and  uniqueness, we use a standard iteration argument with compactly supported initial data. For the extension of this local solution
to the global one, the main concern is whether a particle trajectory reaches a singular point of the Hooke potential at a finite time $t>0$. Therefore, we need to carefully estimate  the distance  between two atoms in a diatomic molecule.  Since the corresponding particle trajectory is  non-autonomous and its motion is oscillatory, it is difficult to obtain such estimates.  The main idea for overcoming this difficulty is to divide the time region into a chaotic region and an almost autonomous region. More precisely, we compare the strength of the stretching force exerted on the atom from the field $F^-$ with the strength of the stretching force that comes from the oscillatory potential $F^h$, and then divide the time interval into the region in which the strengths of $F^-$ and $F^h$ are comparable (i.e., the chaotic region) and the region where the autonomous oscillatory forces dominates (i.e., the almost autonomous region).
In the almost autonomous region,  sharp estimates on the relative velocities of atoms are available through a descriptive analysis of the oscillatory motions of the atoms due
to the almost autonomous nature of the system. Although this is not the case for the chaotic region, we only need a rough estimate in this region.
By estimating the time duration for each case, we obtain the global existence result without any smallness assumption.\newline

Before we close this introduction, we briefly mention a historical remark on the Cauchy problem of the monatomic Vlasov-Poisson system.  Ukai and Okabe \cite{U-O} studied the global existence and uniqueness for the two-dimensional case with a small initial data assumption. Horst-Hunze-Neunzert \cite{H-H-N}  proved the existence of the weak solution for the three-dimensional system. Batt \cite{Batt} and Bardos-Degond \cite{B-D} obtained the existence of symmetric classical solutions and  small initial data solutions, respectively. After their works, Lions-Perthame \cite{Lions-Perthame} and Pfaffelmoser \cite{Pfaffelmoser} independently obtained the   global-in-time  solution for three-dimensional large data. We also refer to \cite{Guo, Hwang, H-V1, H-V2} for the initial boundary value problems. To the best of our knowledge, we are not aware of any existence results on polyatomic Vlasov equations.\newline

This paper is organized as follows. In Section \ref{sec2}, we consider a solution of the linear diatomic Vlasov equation via a standard particle trajectory argument. In Section \ref{sec3}, we provide the local existence,  uniqueness and continuation principle of the solution to the diatomic Vlasov-Poisson system.  In Section \ref{sec4}, we present estimates for the oscillatory variables $\Omega$ and $H$. Finally, Section \ref{sec5} is devoted to the proof of our global existence result.

\section{Particle trajectory and linear Vlasov equation}\label{sec2}
\setcounter{equation}{0}
We consider $F^+(t, \cdot,\cdot)$ and $F^-(t, \cdot,\cdot)$ as  given vector fields with Lipschitz continuity. We assume that $F^h$ satisfies  conditions (H1)-(H4) in Section \ref{intro}. Let $f=f(t,x,v,\omega,\eta)$ be the solution to  the following linear diatomic Vlasov (ldV) equation.
\begin{align} \label{linearpVP}
(ldV) \left\{\begin{aligned}
 &\partial_t f + v  \partial_x f + F^+
\partial_v f+\eta\partial_\omega f+ (F^-+F^{h}) \partial_\eta f  =0,
 \\&F^h=F^h(\omega),
 \\&f(0,x,v,\omega,\eta) = f_0(x,v,\omega,\eta) \ge 0.
\end{aligned}\right.
\end{align}

 In this part, we study  particle trajectories generated by $(v,F^+(x,\omega),\eta,F^-(x,\omega) +F^h(\omega) )$ on $(x,v)\in\bbr\times\bbr$ and $(\omega,\eta)\in (0, \epsilon)\times \bbr$. From the particle trajectories, we derive the solution formula for the linear diatomic Vlasov equation and obtain its measure preserving property.

We fix $t\in \bbr_+$ and  let $z=[x,v,\omega,\eta]$. We define the particle trajectory (PT) by
\[[X(s;t,z),V(s;t,z)]=[X(s;t,x,v,\omega,\eta), V(s;t,x,v,\omega,\eta)]\] and \[[\Omega(s;t,z), H(s;t,z)]=[\Omega(s;t,x,v,\omega,\eta), H(s;t,x,v,\omega,\eta)]\] passing through $(x,v)$ and $(\omega,\eta)$  at time $t$, respectively,  satisfying the following  the ordinary differential equation (ODE) system:

\begin{align} \label{linearpt}
(PT) \left\{\begin{aligned}\frac{d}{ds}X(s;t,z) &= V(s;t,z),\\
 \frac{d}{ds}V(s;t,z)& = F^+ (s,X(s;t,z),\Omega(s;t,z)),\\
\frac{d}{ds}\Omega(s;t,z) &= H(s;t,z),\\
 \frac{d}{ds}H(s;t,z) &=
F^-(s,X(s;t,z),\Omega(s;t,z)) +F^h(\Omega(s;t,z)),
\end{aligned}\right.
\end{align}
subject to the initial data
\[  X(t;t,z) = x \in \bbr\quad \mbox{and} \quad  V(t;t,z) = v \in \bbr, \]
and
\[  \Omega(t;t,z) = \omega\in (0,\epsilon) \quad \mbox{and} \quad  H(t;t,z) = \eta \in \bbr, \]
where $F^+(s, \cdot,\cdot)$ and $F^-(s, \cdot,\cdot)$ are  given vector fields and $F^h(\cdot)$ is a given Hooke potential function.

Then, for the solution $Z(s;t,z)=[X(s;t,z),V(s;t,z),\Omega(s;t,z),H(s;t,z)]$ to the ODE system \eqref{linearpt}, the following relation holds:
\[\frac{d}{ds}f(s,Z(s;t,z))=\partial_t f +v\cdot \partial_xf +F^+\cdot\partial_vf +\eta\cdot \partial_\omega f +(F^-+F^{h})\cdot\partial_\eta f=0. \]
Thus, similar to the classical monatomic Vlasov equation, we have
\[\mathring{f}(Z(0;t,z))=f(0,Z(0;t,z))=f(t,Z(t;t,z))=f(t,x,v,\omega,\eta).\]

For $z=[x,v,\omega,\eta]$, we can calculate  $\det \partial Z/\partial z(s;t,z)$ as
\[\det \frac{\partial Z}{\partial z}(t;t,z)=1\]
 and
\[\frac{d}{ds}\det \frac{\partial Z}{\partial z}(s;t,z)=\partial_z\cdot G(s,Z(s;t,z)) \det \frac{\partial Z}{\partial z}.\]
Note that
\[G(x,v,\omega,\eta)=[v, ~F^+(s,x,\omega), ~\eta, ~F^-(s,x,\omega)+F^h(\omega)].\]

Since the vector field $G$ is divergence-free, which is the same as in the classical monatomic Vlasov case, the measure preserving property also holds.

From this measure preserving property, we have
\begin{eqnarray*}
\int f^p(t,x,v,\omega,\eta)dxdvd\omega d\eta &=&\int f^p(0,Z(0;t,z))dxdvd\omega d\eta\\&=&\int f^p(0,Z(0;t,z))\det \frac{\partial Z}{\partial z}(0;t,z) dxdvd\omega d\eta\\
&=&\int f^p(0,x,v,\omega,\eta)dxdvd\omega d\eta.
\end{eqnarray*}

We summarize the results in this section as follows.
\begin{proposition}\label{prop2.1}
Let $f$ be the solution to \eqref{linearpVP} for given vector fields $F^\pm$, $F^h$, and  \[Z(s;t,z)=[X(s;t,z),V(s;t,z),\Omega(s;t,z),H(s;t,z)]\] be   the solution  to the ODE system \eqref{linearpt}. We assume that $\Omega(s;t,z)\in (0,\epsilon)$ on $0\leq s,t\leq T$.

Then the solution $f$ to \eqref{linearpVP} satisfies
\[f(t,x,v,\omega,\eta)=\mathring{f}(Z(0;t,z)),\]
and
\begin{eqnarray*}
\|f(t)\|_{L^p_{z}} &=&\|\mathring{f}\|_{L^p_{z}},\quad \mbox{for}\quad 0<t<T,
\end{eqnarray*}
where  $p\geq 1$ or  $p=\infty$.

\end{proposition}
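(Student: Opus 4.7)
The plan is to follow the classical method of characteristics, as essentially sketched in the discussion preceding the statement, and to make explicit the two ingredients that the statement packages together: propagation along trajectories and volume preservation of the flow.

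First I would set up the representation formula. By Cauchy--Lipschitz applied to \eqref{linearpt}, using the Lipschitz hypotheses on $F^\pm$ and the $C^2$ regularity of $F^h$ together with the standing assumption that $\Omega(s;t,z)\in(0,\epsilon)$ (which keeps trajectories away from the singular points of $F^h$), the flow $z\mapsto Z(s;t,z)$ is well-defined and $C^1$. Then a direct chain rule computation, exactly the one displayed in the excerpt, shows that $\frac{d}{ds}f(s,Z(s;t,z))$ equals the left-hand side of \eqref{linearpVP} evaluated along the trajectory, hence vanishes. Integrating from $s=0$ to $s=t$ and using the reversibility of the flow (i.e.\ $Z(t;t,z)=z$ by the initial condition) gives $f(t,x,v,\omega,\eta)=\mathring f(Z(0;t,z))$, which is the first assertion.

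Next I would establish volume preservation of the flow. The driving vector field is
\[
G(x,v,\omega,\eta)=\bigl(v,\,F^{+}(s,x,\omega),\,\eta,\,F^{-}(s,x,\omega)+F^{h}(\omega)\bigr),
\]
and inspection shows $\partial_x v=\partial_v F^+=\partial_\omega \eta=\partial_\eta(F^-+F^h)=0$, so $\nabla_{z}\cdot G\equiv 0$. Liouville's formula, which the authors write as $\frac{d}{ds}\det(\partial Z/\partial z)=(\nabla_z\cdot G)\,\det(\partial Z/\partial z)$ with initial value $1$ at $s=t$, therefore gives $\det(\partial Z/\partial z)(s;t,z)\equiv 1$. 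Combined with the fact that $Z(\cdot;t,\cdot)$ is a $C^1$ diffeomorphism of $\mathcal{D}$ onto its image, this shows the flow is measure preserving on $\mathcal{D}$.

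Finally I would deduce the $L^p$ identity by changing variables. For $1\le p<\infty$, starting from the representation formula and substituting $z'=Z(0;t,z)$, whose Jacobian has absolute value $1$ by the previous step, yields
\[
\|f(t)\|_{L^p_z}^p=\int_{\mathcal D}\mathring f(Z(0;t,z))^p\,dz=\int_{\mathcal D}\mathring f(z')^p\,dz'=\|\mathring f\|_{L^p_z}^p.
\]
For $p=\infty$, the same change of variables, together with the fact that $z\mapsto Z(0;t,z)$ is a bijection of $\mathcal D$ onto itself preserving Lebesgue null sets, gives $\operatorname*{ess\,sup} f(t,\cdot)=\operatorname*{ess\,sup}\mathring f$.

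The only real subtlety, rather than an obstacle, is verifying that $z\mapsto Z(0;t,z)$ is a bijection of $\mathcal{D}$ and that $\Omega(0;t,z)\in(0,\epsilon)$ so that the image lies in $\mathcal{D}$; both follow from the standing assumption $\Omega(s;t,z)\in(0,\epsilon)$ applied in reverse time and from uniqueness of solutions to \eqref{linearpt}. The heavy analysis of trajectories avoiding the singular endpoints $\omega=0,\epsilon$ is not needed here since it is taken as hypothesis; that issue is precisely what the subsequent sections must address for the nonlinear problem.
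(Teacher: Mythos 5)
Your proposal is correct and follows essentially the same route as the paper: the representation formula $f(t,z)=\mathring f(Z(0;t,z))$ via the chain rule along characteristics, Liouville's identity with the observation that the driving vector field $G$ is divergence-free to get $\det(\partial Z/\partial z)\equiv 1$, and then a change of variables for the $L^p$ norms. You merely make explicit a couple of points the paper leaves implicit (the $p=\infty$ case and the bijectivity of $z\mapsto Z(0;t,z)$ under the standing assumption $\Omega\in(0,\epsilon)$), but the argument is the same.
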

\section{Local existence}\label{sec3}
\setcounter{equation}{0}
In this part, we consider the local existence of the solution to \eqref{pVP}. We will follow the standard argument in \cite{Rein}.  Let $\mathring{f}(z)=f(0,z)\in C^1_c(\bbr\times\bbr \times (0,\epsilon)\times \bbr)$ with $f(0,z)\geq0$. Thus, there exist $\mathring{P^x}$, $\mathring{P^v}$, $\mathring{P^{\omega\pm}}$     and  $\mathring{P^\eta}$  such that
\begin{eqnarray}\label{eq 4.0}
f(0,z)=0,\quad |x|\geq \mathring{P^x} ~\mbox{or}~ |v|\geq \mathring{P^v}~\mbox{or} ~ 0<\omega \leq \mathring{P^{\omega-}}   ~\mbox{or} ~ \epsilon> \omega \geq \mathring{P^{\omega+}}   ~\mbox{or} ~|\eta|\geq \mathring{P^\eta}.
\end{eqnarray}

We will use a well-known iterative scheme to obtain the local existence result.  First, we define $f_0$ as follows.
\[f_0(t,z)=f(0,z)=\mathring{f}(z), \quad t\geq0.\]

For a  given $f_n\in C^1_c([0,\infty)\times \bbr\times\bbr \times (0,\epsilon)\times \bbr)$, we define $f_{n+1}$ inductively as follows.

\noindent Let $\rho_n$ and $F^{\pm}_n$ be the mass density and force fields for the given phase distribution $f_n$ satisfying

\begin{align}
\begin{aligned}\label{inductionD}
\rho_n(t,x)&=\int_{\bbr \times (0,\epsilon)\times \bbr}2f_n(t,z)dvd\omega d\eta,\\
F_n(t,x)&=\frac{1}{2}\Big(\int^{\infty}_x\rho_n(t,y)dy-\int_{-\infty}^x\rho_n(t,y)dy\Big),\\
F^{\pm}_n(t,x,\omega)&=F_n(t,x+ \omega)\pm F_n(t,x- \omega)
,
\end{aligned}
\end{align}
and
$F^h=F^h(\omega)$ is a given function satisfying conditions (H1)-(H4).

Then we define $f_{n+1}$ as the solution to the following linear diatomic Vlasov equation:
\begin{align}
 \begin{aligned}\nonumber
 &\partial_t f_{n+1} + v  \partial_x f_{n+1}+ F^+_n
\partial_v f_{n+1}+\eta\partial_\omega f_{n+1}+ (F^-_n+F^h) \partial_\eta f_{n+1} =0,
\end{aligned}\end{align}
subject to the initial data $f(0,x,v,\omega,\eta) = \mathring{f}(x,v,\omega,\eta) \ge 0$.
Then by Proposition \ref{prop2.1} in  Section \ref{sec2}, we have the following solution formula:
\[f_{n+1}(t,z)=\mathring{f}(Z_n(0;t,z)),\quad t\geq0, \quad z=(x,v,\omega,\eta),\]
where $Z_n(s;t,z)$ is the particle trajectory defined by
\begin{align}
(PTn) \left\{ \begin{aligned}\label{ptn}\frac{d}{ds}X_n(s;t,z) &= V_n(s;t,z),\\
\frac{d}{ds}V_n(s;t,z)& = F^+_n (s,X_n(s;t,z),\Omega_n(s;t,z)),\\
\frac{d}{ds}\Omega_n(s;t,z) &= H_n(s;t,z),\\
 \frac{d}{ds}H_n(s;t,z) &= F^-_n(s,X_n(s;t,z),\Omega_n(s;t,z)) +F^h(\Omega_n(s;t,z)),
\end{aligned}\right.\end{align}subject to the initial data
\[  X_n(t;t) = x \in \bbr\quad \mbox{and} \quad  V_n(t;t) = v \in \bbr, \]
and
\[  \Omega_n(t;t) = \omega\in (0,\epsilon) \quad \mbox{and} \quad  H_n(t;t) = \eta \in \bbr. \]

%
%
%
%
%


Similar to $f_n$, we inductively define the  support boundaries corresponding to $f_n$  as
\begin{eqnarray*}
P^x_0=\mathring{P^x}, \quad P^x_n(t):=\sup\{|X_{n-1}(s,0,z)|:z\in supp ~\mathring{f},~ 0\leq s\leq t\},
\\ P^v_0=\mathring{P^v}, \quad P^v_n(t):=\sup\{|V_{n-1}(s,0,z)|:z\in supp ~\mathring{f},~ 0\leq s\leq t\},\\
P^\eta_0=\mathring{P^\eta}, \quad P^\eta_n(t):=\sup\{|H_{n-1}(s,0,z)|:z\in supp ~\mathring{f},~ 0\leq s\leq t\},\\
\end{eqnarray*}
and $P^{\omega\pm}_0=\mathring{P^{\omega\pm}},$
\begin{eqnarray*}
 && P^{\omega-}_n(t):=\inf\{\Omega_{n-1}(s,0,z):z\in supp ~\mathring{f},~ 0\leq s\leq t\},\\
&&P^{\omega+}_n(t):=\sup\{\Omega_{n-1}(s,0,z):z\in supp ~\mathring{f},~ 0\leq s\leq t\}.
\end{eqnarray*}

\subsection{Boundedness of $\Omega$ for small time $0\leq s\leq t_0$:}

We assume that
\[\Omega_n(0;0,z)=\omega \in (0+\epsilon_0, \epsilon-\epsilon_0),\quad x\in (-R,R), \quad v\in (-R,R),\quad  \mbox{and}~ \eta\in (-R,R),\]
for some $0<2\epsilon_0<\epsilon$ and $R>0$.
Then we consider the following linear function depending on $\epsilon_0$:
\[G(x)=\frac{2x-\epsilon}{\epsilon_0-\epsilon}F^h\Big(\frac{\epsilon_0}{2}\Big).\]
Then we have
\[ |F^h(x)|\leq|G(x)|, \quad \mbox{for}\quad \frac{\epsilon_0}{2}\leq x\leq\epsilon-\frac{\epsilon_0}{2}.   \]

\begin{lemma}\label{lemma 4.1}Let $Z_n(s;t,z)=[X_n(s;t,z), V_n(s;t,z),\Omega_n(s;t,z), H_n(s;t,z)]$ be the solution to \eqref{ptn} and let $0<2\epsilon_0<\epsilon$, $n\in\bbn$ and
\[\Omega_n(0;0,z)=\omega,\quad H_n(0;0,z)=\eta.\]
We a priori assume that
\begin{eqnarray}\label{eq 4.2}
\frac{\epsilon_0}{2}\leq\Omega_n(s;0,z)\leq \epsilon-\frac{\epsilon_0}{2}
\end{eqnarray}
and there is a positive constant $C_->0$ such that
\begin{eqnarray}\label{eq 4.3}
|F^-(s,X_n(s;0,z),\Omega_n(s;0,z))|\leq C_-.
\end{eqnarray}

Then we have
\begin{eqnarray*}
|\Omega_n(t;0,z)|+ |H_n(t;0,z)|&\leq &
e^{C_1t}(|\omega|+|\eta|)+C_2\frac{e^{C_1t}-1}{C_1}
,\end{eqnarray*}
where $C_1$ and $C_2$ are constants depending on $\epsilon$, $\epsilon_0$, $C_-$ and $F^h$.

\end{lemma}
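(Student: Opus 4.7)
The plan is to exploit the a priori assumption \eqref{eq 4.2} together with the auxiliary linear comparison function $G$ introduced just before the lemma, and then close the estimate by a standard Gronwall inequality on the sum $U(s)=|\Omega_n(s;0,z)|+|H_n(s;0,z)|$.

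First I would observe that, under the standing assumption $\Omega_n(s;0,z)\in[\epsilon_0/2,\epsilon-\epsilon_0/2]$, the function $F^h(\Omega_n(s;0,z))$ is pointwise dominated in absolute value by $|G(\Omega_n(s;0,z))|$. This is precisely the inequality $|F^h(x)|\le |G(x)|$ stated just above the lemma, and it follows from (H1)--(H4): on $[\epsilon_0/2,\epsilon/2]$ the function $F^h$ is nonnegative, decreasing and convex, so it lies below the chord from $(\epsilon_0/2,F^h(\epsilon_0/2))$ to $(\epsilon/2,0)$, which is exactly $G$ restricted to that interval; the concavity and odd symmetry about $\epsilon/2$ give the analogous estimate on $[\epsilon/2,\epsilon-\epsilon_0/2]$. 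Since $G$ is linear, one has the crude bound $|G(x)|\le a|x|+b$ with $a=2|F^h(\epsilon_0/2)|/(\epsilon-\epsilon_0)$ and $b=a\,\epsilon/2$, both of which are constants depending only on $\epsilon,\epsilon_0$ and $F^h$.

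Next I would integrate the $\Omega_n$ and $H_n$ equations of \eqref{ptn} in time starting from $s=0$ and take absolute values, yielding
\begin{equation*}
|\Omega_n(t;0,z)|\le |\omega|+\int_0^t|H_n(s;0,z)|\,ds,
\end{equation*}
\begin{equation*}
|H_n(t;0,z)|\le |\eta|+\int_0^t\bigl(|F^-_n|+|F^h(\Omega_n)|\bigr)ds\le |\eta|+(C_-+b)\,t+a\int_0^t|\Omega_n(s;0,z)|\,ds,
\end{equation*}
where I have used the a priori bound \eqref{eq 4.3} on $F^-$ and the linear envelope for $F^h$ from the previous step. Adding the two inequalities gives
\begin{equation*}
U(t)\le (|\omega|+|\eta|)+(C_-+b)\,t+C_1\int_0^t U(s)\,ds, \qquad C_1:=\max(1,a).
\end{equation*}

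Finally I would apply the integral form of Gronwall's inequality to this relation. The inhomogeneous-linear version gives directly
\begin{equation*}
U(t)\le (|\omega|+|\eta|)\,e^{C_1 t}+(C_-+b)\,\frac{e^{C_1 t}-1}{C_1},
\end{equation*}
which is the stated bound with $C_2=C_-+b$. There is no real obstacle here; the only thing to be careful about is that the constants $a$, $b$, and hence $C_1$, $C_2$, depend only on $\epsilon$, $\epsilon_0$, $C_-$ and $F^h$, not on the iteration index $n$ or the phase point $z$, which is essential for using this lemma inductively in the subsequent sections.
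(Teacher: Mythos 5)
Your proposal is correct and follows essentially the same route as the paper's proof: both dominate $|F^h(\Omega_n)|$ by the linear envelope $|G(\Omega_n)|\le a|\Omega_n|+b$, add the $\Omega_n$ and $H_n$ estimates to obtain a linear Gronwall inequality for $U(s)=|\Omega_n|+|H_n|$, and conclude. The only cosmetic difference is that the paper works with the differential inequality for $U$ and multiplies by $e^{-C_1 s}$, whereas you integrate first and invoke the integral form of Gronwall — this yields the same bound (with a marginally sharper $C_2$), and your explicit justification of $|F^h|\le|G|$ via convexity/concavity and the odd symmetry about $\epsilon/2$ fills in a step the paper merely asserts.
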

\begin{proof}

From elementary calculation and the definition of particle trajectory,  the following differential inequalities hold:
\begin{align}
\begin{aligned}\label{eq 4.1}
\frac{d}{ds}|\Omega_n(s;0,z)| &\leq  |H_n(s;0,z)|,\\
\quad \frac{d}{ds}|H_n(s;0,z)| &\leq
|F^-(s,X_n(s;0,z),\Omega_n(s;0,z))| +|F^h(\Omega_n(s;0,z))|.
\end{aligned}
\end{align}

We now add  both inequalities in \eqref{eq 4.1} to obtain
\begin{eqnarray*}
\frac{d}{ds}|\Omega_n(s;0,z)| +\frac{d}{ds}|H_n(s;0,z)|&\leq & |H_n(s;0,z)|
+|F^-(s,X_n(s;0,z),\Omega_n(s;0,z))| +|F^h(\Omega_n(s;0,z))|.
\end{eqnarray*}
Note that by the assumption in \eqref{eq 4.2},
\[ |F^h(\Omega_n(s;0,z))|\leq|G(\Omega_n(s;0,z))|.   \]

Since we assume that $|F^-(s,X_n(s;0,z),\Omega_n(s;0,z))|\leq C_-$ in \eqref{eq 4.3},
we have
\begin{eqnarray*}
\frac{d}{ds}|\Omega_n(s;0,z)| +\frac{d}{ds}|H_n(s;0,z)|&\leq & |H_n(s;0,z)|
+C_- +|G(\Omega_n(s;0,z))|\\
&=&|H_n(s;0,z)|
+C_-
+\bigg|F^h\Big(\frac{\epsilon_0}{2}\Big)\frac{2\Omega_n(s;0,z)-\epsilon}{\epsilon_0-\epsilon}\bigg|
\\
&\leq&
|H_n(s;0,z)|
+C_-
+2\Big|F^h\Big(\frac{\epsilon_0}{2}\Big)\Big|\bigg|\frac{\Omega_n(s;0,z)}{\epsilon_0-\epsilon}\bigg|+\Big|F^h\Big(\frac{\epsilon_0}{2}\Big)\Big|\bigg|\frac{\epsilon}{\epsilon_0-\epsilon}\bigg|.
\end{eqnarray*}

For simplicity, we denote
\begin{align}
\begin{aligned}\label{eq 4.33}
&C_1=C_1(\epsilon,\epsilon_0,F^h)=\max\bigg\{1,2\bigg|\frac{F^h(\frac{\epsilon_0}{2})}{\epsilon_0-\epsilon}\bigg|\bigg\},\\
&C_2=C_2(\epsilon,\epsilon_0,F^h)=C_-+\frac{\epsilon}{2}C_1.
\end{aligned}
\end{align}
Using the notation, it follows that
\begin{eqnarray}\label{eq 4.15}
\frac{d}{ds}(|\Omega_n(s;0,z)|+ |H_n(s;0,z)|)&\leq &C_1(|\Omega_n(s;0,z)| +|H_n(s;0,z)|)+C_2,\end{eqnarray}
where $C_1$ and $C_2$ are the positive constants in \eqref{eq 4.33}.

We multiply both sides of \eqref{eq 4.15} by $e^{-C_1s}$ and integrate them over $[0,s]$ to obtain
\begin{eqnarray*}
e^{-C_1s}(|\Omega_n(s;0,z)|+ |H_n(s;0,z)|)-(|\Omega_n(0;0,z)|+ |H_n(0;0,z)|)&\leq & C_2\int_0^se^{-C_1\tau}d\tau.\end{eqnarray*}

This implies that
\begin{eqnarray*}
|\Omega_n(s;0,z)|+ |H_n(s;0,z)|&\leq &
e^{C_1 s}(|\omega|+|\eta|)+C_2e^{C_1 s}\int_0^se^{-C_1\tau}ds\\
&=&
e^{C_1s}(|\omega|+|\eta|)+C_2\frac{e^{C_1s}-1}{C_1}
.\end{eqnarray*}
\end{proof}

\begin{lemma}\label{lemma 4.2}Let $Z_n(s;t,z)=[X_n(s;t,z), V_n(s;t,z),\Omega_n(s;t,z), H_n(s;t,z)]$ be the solution to \eqref{ptn} and let $0<2\epsilon_0<\epsilon$, $n\in\bbn$ and
\[\Omega_n(0;0,z)=\omega,\quad H_n(0;0,z)=\eta.\]
With the same assumptions of Lemma \ref{lemma 4.1}, we have
\begin{eqnarray*}
\big|\Omega_n(s;0,z)-\Omega_n(0;0,z)\big|\leq se^{C_1s}(|\omega|+|\eta|)+C_2s \frac{e^{C_1s}-1}{C_1},
\end{eqnarray*}
where $C_1$ and $C_2$ are constants depending on $\epsilon$, $\epsilon_0$, uniform upper bound of $|F^-|$ and $F^h$.

\end{lemma}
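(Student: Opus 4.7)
The plan is to reduce the bound on $\Omega_n(s;0,z)-\Omega_n(0;0,z)$ to a bound on the integral of $|H_n|$ and then quote Lemma \ref{lemma 4.1}.

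First, I would use the second equation of the particle trajectory system \eqref{ptn}, namely
\[
\frac{d}{ds}\Omega_n(s;0,z) = H_n(s;0,z),
\]
and integrate from $0$ to $s$ to obtain
\[
\Omega_n(s;0,z) - \Omega_n(0;0,z) = \int_0^s H_n(\tau;0,z)\,d\tau.
\]
Taking absolute values and pulling them inside the integral yields
\[
\bigl|\Omega_n(s;0,z) - \Omega_n(0;0,z)\bigr| \le \int_0^s |H_n(\tau;0,z)|\,d\tau.
\]

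Next, under the standing a priori hypotheses \eqref{eq 4.2} and \eqref{eq 4.3}, Lemma \ref{lemma 4.1} applies at every time $\tau \in [0,s]$ and gives
\[
|H_n(\tau;0,z)| \le |\Omega_n(\tau;0,z)| + |H_n(\tau;0,z)| \le e^{C_1\tau}(|\omega|+|\eta|) + C_2\frac{e^{C_1\tau}-1}{C_1}.
\]
Since the right hand side is monotone increasing in $\tau$, it is bounded above on $[0,s]$ by its value at $\tau=s$. Therefore
\[
\int_0^s |H_n(\tau;0,z)|\,d\tau \le s\left[e^{C_1s}(|\omega|+|\eta|) + C_2\frac{e^{C_1s}-1}{C_1}\right],
\]
which is precisely the claimed bound. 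The constants $C_1, C_2$ are the same as those produced in Lemma \ref{lemma 4.1}, so they depend on $\epsilon,\epsilon_0,C_-$ and $F^h$ in the stated way.

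There is no real obstacle here, since the statement is essentially a corollary of Lemma \ref{lemma 4.1}; one could alternatively keep the sharper bound $\int_0^s e^{C_1\tau}\,d\tau = (e^{C_1s}-1)/C_1$ rather than $s e^{C_1s}$, but the crude pointwise estimate on the integrand is sufficient for the author's purposes and yields the stated inequality directly.
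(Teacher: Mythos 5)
Your proof is correct and follows essentially the same route as the paper: integrate $\frac{d}{ds}\Omega_n = H_n$, bound $|H_n(\tau)|$ by the estimate from Lemma \ref{lemma 4.1}, and use monotonicity of the resulting bound in $\tau$ to replace the integral by $s$ times its value at $\tau=s$. The observation that one could keep the sharper integrated bound is a nice aside but not needed.
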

\begin{proof}
By Lemma \ref{lemma 4.1},
\begin{eqnarray*}
 |H_n(s;0,z)|&\leq &
e^{C_1s}(|\omega|+|\eta|)+C_2\frac{e^{C_1s}-1}{C_1}
.\end{eqnarray*}

Note that
\[|\Omega_n(s;0,z)-\Omega_n(0;0,z)| = \Big|\int^s_0 H_n(\tau;0,z)d\tau\Big|.\]

Thus,
\begin{eqnarray*}
|\Omega_n(s;0,z)-\Omega_n(0;0,z)|\leq se^{C_1s}(|\omega|+|\eta|)+C_2s \frac{e^{C_1s}-1}{C_1}.
\end{eqnarray*}

\end{proof}

\begin{proposition}\label{prop 4.3}
Let $Z_n(s;t,z)=[X_n(s;t,z), V_n(s;t,z),\Omega_n(s;t,z), H_n(s;t,z)]$ be the solution to \eqref{ptn}.
We assume that for some $0<2\epsilon_0<\epsilon$ and $R>0$,
\[\Omega_n(0;0,z)=\omega \in [0+\epsilon_0, \epsilon-\epsilon_0]\quad  \mbox{and}\quad H_n(0;0,z)=\eta\in [-R,R].\]

Then there exists a small number $t_0>0$ such that
\[ \frac{\epsilon_0}{2}\leq \Omega_n(s;0,z)\leq  \epsilon-\frac{\epsilon_0}{2},\]
for all $0\leq s\leq t_0$.
Here $t_0>0$ depends on $R$, $\epsilon$, $\epsilon_0$, $F^-$ and $F^h$.

\end{proposition}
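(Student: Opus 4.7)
The plan is to combine the conditional deviation estimate of Lemma \ref{lemma 4.2} with a continuity bootstrap in order to remove its two standing hypotheses on a short interval $[0,t_0]$. Those hypotheses are (i) a uniform bound $|F^-_n(s,\cdot,\cdot)|\le C_-$ and (ii) $\Omega_n(s;0,z)\in[\epsilon_0/2,\epsilon-\epsilon_0/2]$. Hypothesis (i) I will obtain unconditionally and uniformly in $n$ from the mass-conservation assertion of Proposition \ref{prop2.1}, while hypothesis (ii) will be closed by a standard maximal-time argument comparing the strict initial inclusion $\omega\in[\epsilon_0,\epsilon-\epsilon_0]$ with the controlled deviation supplied by Lemma \ref{lemma 4.2}.

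For step (i), the plan is to apply Proposition \ref{prop2.1} inductively along the Picard iteration producing $f_n$. Provided $\Omega_{n-1}(\cdot)\in(0,\epsilon)$ on the interval in question, which is maintained inductively with the base case $f_0=\mathring f$ being trivial, one has $\|f_n(s,\cdot)\|_{L^1_z}=\|\mathring f\|_{L^1_z}$. By Fubini, $\|\rho_n(s,\cdot)\|_{L^1_x}=2\|f_n(s,\cdot)\|_{L^1_z}$, and the definition of $F_n$ gives $\|F_n(s,\cdot)\|_{L^\infty_x}\le\tfrac12\|\rho_n(s,\cdot)\|_{L^1_x}$. Hence $|F^-_n(s,x,\omega)|\le 2\|F_n(s,\cdot)\|_{L^\infty_x}\le 2\|\mathring f\|_{L^1_z}=:C_-$, uniformly in $s,x,\omega$ and in $n$. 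This freezes the constants $C_1,C_2$ of Lemma \ref{lemma 4.2} once and for all, depending only on $\epsilon,\epsilon_0,F^h$ and the initial datum.

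For step (ii), fix any $z$ with $\omega\in[\epsilon_0,\epsilon-\epsilon_0]$ and $|\eta|\le R$, and define
\[
T^*(z):=\sup\bigl\{\tau\ge 0 \,:\, \Omega_n(s;0,z)\in[\epsilon_0/2,\epsilon-\epsilon_0/2]\ \text{for all}\ s\in[0,\tau]\bigr\}.
\]
By continuity of $s\mapsto\Omega_n(s;0,z)$ and the strict inclusion at $s=0$, we have $T^*(z)>0$. On $[0,T^*(z)]$ both hypotheses of Lemma \ref{lemma 4.2} hold, so
\[
|\Omega_n(s;0,z)-\omega|\le\Phi(s),\qquad \Phi(s):=s\,e^{C_1 s}(\epsilon+R)+C_2\,s\cdot\frac{e^{C_1 s}-1}{C_1},
\]
where I have used $|\omega|\le\epsilon$ and $|\eta|\le R$. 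Since $\Phi$ is continuous with $\Phi(0)=0$, I pick $t_0>0$, depending only on $\epsilon,\epsilon_0,R,C_-,F^h$, such that $\Phi(t_0)<\epsilon_0/2$. If $T^*(z)<t_0$, then by maximality $\Omega_n(T^*(z);0,z)\in\{\epsilon_0/2,\,\epsilon-\epsilon_0/2\}$, forcing $|\Omega_n(T^*(z);0,z)-\omega|\ge\epsilon_0/2$; but Lemma \ref{lemma 4.2} yields $|\Omega_n(T^*(z);0,z)-\omega|\le\Phi(T^*(z))<\epsilon_0/2$, a contradiction. Hence $T^*(z)\ge t_0$ for every admissible $z$, which is precisely the assertion.

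The main obstacle I expect is the circular dependence between the conclusion we want and the a priori hypothesis on $\Omega_n$ required by Lemma \ref{lemma 4.2}; the bootstrap above is the standard and clean remedy. The only genuinely delicate point is that the bound $C_-$ on $|F^-_n|$ must be truly independent of both $s$ and the iteration index $n$, so that the resulting $t_0$ can be selected uniformly across the Picard iteration; this uniformity is exactly what the $L^1$-conservation of Proposition \ref{prop2.1} delivers, and without it one could not iterate the local bound.
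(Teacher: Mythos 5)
Your proposal is correct and follows essentially the same bootstrap strategy as the paper: define the maximal time on which the a priori localization $\Omega_n\in[\epsilon_0/2,\epsilon-\epsilon_0/2]$ holds, apply the conditional deviation estimate of Lemma \ref{lemma 4.2} on that interval, choose $t_0$ small enough that the deviation stays below the margin between the initial interval $[\epsilon_0,\epsilon-\epsilon_0]$ and the enlarged one, and reach a contradiction with maximality. The only substantive difference is cosmetic: the paper uses tolerance $\epsilon_0/4$ and derives that $\Omega_n$ remains in the strictly smaller window $[3\epsilon_0/4,\epsilon-3\epsilon_0/4]$ up to the maximal time, whereas you use tolerance $\epsilon_0/2$ and argue directly that $\Omega_n$ cannot reach the boundary values $\{\epsilon_0/2,\epsilon-\epsilon_0/2\}$; both close the loop by continuity in the same way. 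Your step (i), making the uniform-in-$n$ bound $|F^-_n|\le 2\|\mathring f\|_{L^1_z}$ explicit via Proposition \ref{prop2.1}, is implicit in the paper (it appears as Lemma \ref{lemma 4.4}, placed after Proposition \ref{prop 4.3}), and spelling it out here is a sound and slightly cleaner ordering.
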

\begin{proof}Let $C_1$ and $C_2$ be constant defined in Lemma \ref{lemma 4.1}. Note that  $C_1$ and $C_2$ depend on $\epsilon$, $\epsilon_0$, $F^-$ and $F^h$. Assume that
\[ \epsilon_0\leq \omega\leq \epsilon-\epsilon_0,\quad  |\eta|\leq R. \]

From elementary calculation, it follows that
\[\lim_{t\to 0}\bigg[te^{C_1t}(\epsilon+R)+C_2t \frac{e^{C_1t}-1}{C_1}\bigg]=0,\]
which implies that there is a number $t_0>0$ such that for $0\leq s\leq t_0$,
\begin{eqnarray}\label{eq 4.5}\bigg[se^{C_1s}(\epsilon+R)+C_2s \frac{e^{C_1s}-1}{C_1}\bigg]< \frac{\epsilon_0}{4}.\end{eqnarray}

Since the initial value $\Omega_n(0;0,z)$ satisfies
\begin{eqnarray*}
 \epsilon_0\leq \Omega_n(0;0,z)=\omega\leq \epsilon-\epsilon_0,
\end{eqnarray*}
  for each $\omega$ with fixed $n\in \bbn$, there is $t_\omega>0$ such that
\[ \frac{\epsilon_0}{2}\leq \Omega_n(s;0,z)\leq  \epsilon-\frac{\epsilon_0}{2},\quad 0\leq s\leq t_\omega.\]

Let $(0,T_\omega)$ be the maximal interval  satisfying the above for given $\omega=\Omega_n(0;0,z)\in [\epsilon_0,\epsilon-\epsilon_0]$, i.e.,
\[T_\omega=\sup\{t_\omega: \frac{\epsilon_0}{2}\leq \Omega_n(s;0,z)\leq  \epsilon-\frac{\epsilon_0}{2},~0\leq s\leq t_\omega \}.\]

Claim: For any $\omega\in[\epsilon_0,\epsilon-\epsilon_0]$, we have
\[\inf_{\omega\in [\epsilon_0,\epsilon-\epsilon_0] }T_\omega\geq t_0>0,\]
where $t_0$ is defined as above.
By way of contradiction, suppose not. Then there is an initial value   $\omega_0\in [\epsilon_0,\epsilon-\epsilon_0]$ and $\eta_0 \in [-R,R]$ such that
\[T_{\omega_0}< t_0.\]

On $0\leq s < T_{\omega_0}$, by the definition of $T_\omega$,
\[ \frac{\epsilon_0}{2}\leq \Omega_n(s;0,z)\leq  \epsilon-\frac{\epsilon_0}{2}.\]

Lemma \ref{lemma 4.2} implies that
\begin{eqnarray*}
|\Omega_n(s;0,z)-\Omega_n(0;0,z)|\leq se^{C_1s}(|\omega_0|+|\eta_0|)+C_2s \frac{e^{C_1s}-1}{C_1},\quad \mbox{on $0\leq s < T_{\omega_0}$.
}
\end{eqnarray*}

We note that
\[f(s)=se^{C_1s}(\epsilon+R)+C_2s \frac{e^{C_1s}-1}{C_1}\]
is an increasing function.

Thus, on $0\leq s < T_{\omega_0}$,
\begin{eqnarray*}
|\Omega_n(s;0,z)-\Omega_n(0;0,z)|\leq se^{C_1s}(|\omega_0|+|\eta_0|)+C_2s \frac{e^{C_1s}-1}{C_1}=f(s)\leq f(T_{\omega_0}).
\end{eqnarray*}
By \eqref{eq 4.5}, we have
\[f(T_{\omega_0})\leq f(t_0)\leq \frac{\epsilon_0}{4},\]
which implies that
\begin{eqnarray*}
|\Omega_n(s;0,z)-\Omega_n(0;0,z)|\leq \frac{\epsilon_0}{4}.
\end{eqnarray*}

Therefore,  on $0\leq s \leq  T_{\omega_0}$,
\begin{eqnarray*}
\frac{3\epsilon_0}{4}=-\frac{\epsilon_0}{4}+\epsilon_0\leq -\frac{\epsilon_0}{4}+\omega_0 \leq \Omega_n(s;0,z)\leq \frac{\epsilon_0}{4}+\omega_0\leq\frac{\epsilon_0}{4}+ \epsilon-\epsilon_0= \epsilon-\frac{3\epsilon_0}{4}.
\end{eqnarray*}
Thus, we have
\[\lim_{s\to T_{\omega_0}-} \Omega_n(s;0,z)\in \Big[\frac{3\epsilon_0}{4},\epsilon-\frac{3\epsilon_0}{4}\Big].\]

This  contradicts the definition of $T_{\omega_0}$. Therefore, we have proven that for  any $\omega\in[\epsilon_0,\epsilon-\epsilon_0]$,
\[T_\omega\geq t_0.\]
Thus, we obtain the desired result.

\end{proof}

\subsection{Convergence:}
In this subsection, we prove the sequence $f_n$ converges uniformly on $(0,t_0)$, where $t_0>0$ is a small number defined as in Proposition \ref{prop 4.3}.

\begin{lemma}\label{lemma 4.4}For  given initial data $\mathring{f}$, let $F_n$ be the function defined in \eqref{inductionD}. Then
\begin{eqnarray*}\|F^{\pm}_n(t)\|_{L^\infty_{x,\omega}}\leq 2\|\mathring{f}\|_{L^1_{z}}.\end{eqnarray*}
\end{lemma}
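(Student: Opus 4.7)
The proof is essentially a triangle-inequality estimate combined with the $L^1$ conservation inherited from Proposition \ref{prop2.1}. Here is the plan.

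First I would bound $F_n$ pointwise in $x$. Since
\[
F_n(t,x) = \frac{1}{2}\left(\int_x^\infty \rho_n(t,y)\, dy - \int_{-\infty}^x \rho_n(t,y)\, dy\right),
\]
and $\rho_n \geq 0$ (because $f_n \geq 0$, which itself follows from $f_n(t,z) = \mathring{f}(Z_{n-1}(0;t,z))$ and $\mathring{f} \geq 0$), the triangle inequality gives
\[
|F_n(t,x)| \leq \tfrac{1}{2}\|\rho_n(t)\|_{L^1_x}.
\]
By Fubini and the definition of $\rho_n$ in \eqref{inductionD}, $\|\rho_n(t)\|_{L^1_x} = 2\|f_n(t)\|_{L^1_z}$.

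Next I would invoke Proposition \ref{prop2.1}. Each $f_{n+1}$ solves a linear diatomic Vlasov equation with prescribed vector fields $F_n^\pm$ and $F^h$, so the corresponding transport vector $G$ is divergence-free and measure-preserving. Consequently
\[
\|f_n(t)\|_{L^1_z} = \|\mathring{f}\|_{L^1_z}
\]
for all $n$ and all $t$ in the range where the iterate is defined and $\Omega_{n-1}(s;0,z) \in (0,\epsilon)$ (which is guaranteed on $[0,t_0]$ by Proposition \ref{prop 4.3}). Combining the two displays yields $|F_n(t,x)| \leq \|\mathring{f}\|_{L^1_z}$.

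Finally, the definition $F_n^\pm(t,x,\omega) = F_n(t,x+\omega) \pm F_n(t,x-\omega)$ together with the triangle inequality gives
\[
|F_n^\pm(t,x,\omega)| \leq |F_n(t,x+\omega)| + |F_n(t,x-\omega)| \leq 2\|\mathring{f}\|_{L^1_z},
\]
which, upon taking the essential supremum in $(x,\omega)$, yields the claim. There is no real obstacle here; the only point worth being careful about is that the $L^1$ conservation in Proposition \ref{prop2.1} is applicable at each iteration step, which is ensured since the particle trajectory system \eqref{ptn} stays in $\Omega_{n-1} \in (0,\epsilon)$ on the time interval where we work.
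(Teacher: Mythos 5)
Your proof is correct and follows the same route as the paper: bound $|F_n|$ by $\tfrac12\|\rho_n\|_{L^1_x}$ via the triangle inequality, invoke the measure-preserving property of Proposition \ref{prop2.1} to reduce to $\|\mathring f\|_{L^1_z}$, and then double via the definition of $F_n^\pm$. You are in fact slightly more careful than the paper, which writes $\|\rho_n\|_{L^1_x}=\|\mathring f\|_{L^1_z}$ where the factor of $2$ in the definition of $\rho_n$ actually gives $\|\rho_n\|_{L^1_x}=2\|\mathring f\|_{L^1_z}$; the slip cancels against the $\tfrac12$ prefactor in $F_n$, so the final bound $\|F_n^\pm\|_{L^\infty_{x,\omega}}\le 2\|\mathring f\|_{L^1_z}$ is unaffected.
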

\begin{proof}
By the measure preserving property in the previous section, the following holds
\[\|\rho_{n}\|_{L^1_{x}} =\|\mathring{f}\|_{L^1_{z}}.\]

Therefore, we have
\begin{eqnarray*}
|F_n(t,x)|&=&\frac{\big|\int_{-\infty}^x\rho_n(t,y)dy-\int^{\infty}_x\rho_n(t,y)dy\big|}{2}\\
&\leq&\frac{\int_{-\infty}^x\rho_n(t,y)dy+\int^{\infty}_x\rho_n(t,y)dy}{2}\\
&=&\frac{1}{2}\|\rho_{n}\|_{L^1_{x}}\\
&=&  \|\mathring{f}\|_{L^1_{z}}.
\end{eqnarray*}
Thus,
\begin{eqnarray*}\|F^{\pm}_n(t)\|_{L^\infty_{x,\omega}}\leq 2\|\mathring{f}\|_{L^1_{z}}.\end{eqnarray*}

\end{proof}
\begin{remark}
Since \[f_{n}(t,z)=\mathring{f}(Z_{n-1}(0;t,z)),\quad t\geq0, \quad z=(x,v,\omega,\eta)\] and
we assume  \eqref{eq 4.0}, we have
\[f_{n}(t,z)=0\quad \mbox{for}\quad  \omega\leq P^{\omega-}_{n-1}(t)\quad\mbox{or}\quad  \omega\geq P^{\omega+}_{n-1}(t).  \]
\end{remark}

\begin{lemma}\label{pFb}For a given $0<2\epsilon_0<\epsilon$, we assume that
\[\Omega_n(0;0,z)=\omega \in [0+\epsilon_0, \epsilon-\epsilon_0]\quad  \mbox{and}\quad H_n(0;0,z)=\eta\in [-R,R].\]

Let $[X_n,V_n,\Omega_n,H_n]$ be the particle trajectory that satisfies \eqref{ptn} with $f_n,\rho_n,F_n$ in \eqref{inductionD}. Then
 \[\bigg\|\frac{\partial F_n(t)}{\partial x}\bigg\|_{L^\infty_{x}} \leq C,\quad 0\leq t \leq t_0,\]
where $t_0$ is a small number defined as in Proposition \ref{prop 4.3}. Moreover,
\begin{eqnarray*}
\Big\|\frac{\partial F_n^+(t)}{\partial x}\Big\|_{L^\infty_{x,\omega}}+ \Big\|\frac{\partial F_n^+(t)}{\partial \omega}\Big\|_{L^\infty_{x,\omega}}+\Big\|\frac{\partial F_n^-(t)}{\partial x}\Big\|_{L^\infty_{x,\omega}}+ \Big\|\frac{\partial F_n^-(t)}{\partial \omega}\Big\|_{L^\infty_{x,\omega}}<C=C(\mathring{f},\epsilon_0)
,~\quad 0\leq t \leq t_0,
\end{eqnarray*}
and
 on $0\leq t\leq t_0$,
\[ P^v(t)\leq R +\|\mathring{f}\|_{L^1_{z}}t_0\]
and
\[P^\eta\leq C(\mathring{f},\epsilon_0). \]

\end{lemma}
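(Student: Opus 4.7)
The plan is to observe that from the definition of $F_n$ in \eqref{inductionD} one has the pointwise identity $\partial_x F_n(t,x) = -\rho_n(t,x)$, and hence
\[
\partial_x F_n^{\pm}(t,x,\omega) = -\rho_n(t,x+\omega) \mp \rho_n(t,x-\omega),\qquad \partial_\omega F_n^{\pm}(t,x,\omega) = -\rho_n(t,x+\omega) \pm \rho_n(t,x-\omega).
\]
So every bound claimed in the lemma will follow once I can control $\|\rho_n(t)\|_{L^\infty_x}$ uniformly in $n$ on $[0,t_0]$. Since $\rho_n = 2\int f_n\,dv\,d\omega\,d\eta$ and $\|f_n(t)\|_{L^\infty_z} = \|\mathring f\|_{L^\infty_z}$ by the measure-preserving property of Proposition \ref{prop2.1}, this in turn reduces to uniform bounds on the compact support of $f_n$ in the variables $(v,\omega,\eta)$.

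For the $\omega$-support, I would use Lemma \ref{lemma 4.4} to observe that the fields produced by $f_{n-1}$ are uniformly controlled: $\|F^{\pm}_{n-1}(t)\|_{L^\infty_{x,\omega}} \leq 2\|\mathring f\|_{L^1_z}$. Applying Proposition \ref{prop 4.3} with $C_- = 2\|\mathring f\|_{L^1_z}$ then gives $\tfrac{\epsilon_0}{2} \leq \Omega_{n-1}(s;0,z) \leq \epsilon - \tfrac{\epsilon_0}{2}$ for all $0 \leq s \leq t_0$, which confines the $\omega$-support of $f_n$ to an interval of length $\epsilon - \epsilon_0$. For the $v$-support, I would simply integrate the second equation of \eqref{ptn} and use Lemma \ref{lemma 4.4} again to get $|V_{n-1}(s;0,z)| \leq R + 2\|\mathring f\|_{L^1_z}t_0$ on $[0,t_0]$, which (up to an irrelevant constant) is the stated bound on $P^v$. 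For the $\eta$-support, the bound on $\Omega_{n-1}$ just obtained is exactly the a priori hypothesis of Lemma \ref{lemma 4.1}, so applying that lemma with $C_- = 2\|\mathring f\|_{L^1_z}$ produces
\[
|H_{n-1}(s;0,z)| \leq e^{C_1 t_0}(\epsilon + R) + C_2\frac{e^{C_1 t_0} - 1}{C_1} =: C(\mathring f, \epsilon_0),
\]
where $C_1$, $C_2$ depend only on $\epsilon$, $\epsilon_0$, $\|\mathring f\|_{L^1_z}$ and $F^h$.

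Combining these three support bounds with $\|f_n(t)\|_{L^\infty_z} = \|\mathring f\|_{L^\infty_z}$ yields
\[
\|\rho_n(t)\|_{L^\infty_x} \leq 2\|\mathring f\|_{L^\infty_z}\,(P^{\omega+}_n - P^{\omega-}_n)\,(2P^v_n(t))\,(2P^\eta_n(t)) \leq C(\mathring f, \epsilon_0)
\]
on $[0,t_0]$. Then $\|\partial_x F_n\|_{L^\infty_x} = \|\rho_n\|_{L^\infty_x} \leq C$ and all four quantities $\|\partial_x F_n^{\pm}\|_{L^\infty_{x,\omega}}$, $\|\partial_\omega F_n^{\pm}\|_{L^\infty_{x,\omega}}$ are bounded by $2\|\rho_n\|_{L^\infty_x}$ by the pointwise identity above. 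The main subtlety is really bookkeeping rather than any single hard estimate: the $\omega$-confinement closes only because the constants in Proposition \ref{prop 4.3} and Lemma \ref{lemma 4.1} depend only on $\|\mathring f\|_{L^1_z}$, $\epsilon$, $\epsilon_0$, $R$ and $F^h$, and not on $n$, so the uniform support bounds for $f_n$ can be extracted from the previous iterate $f_{n-1}$ without any implicit dependence on the index — this is precisely what makes the iteration scheme in the next subsection self-contained on the time interval $[0,t_0]$.
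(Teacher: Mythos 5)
Your proposal is correct and follows essentially the same route as the paper: both reduce the force-derivative bounds to a uniform $L^\infty$ bound on $\rho_n$ via the identity $\partial_x F_n = \mp\rho_n$, then control $\|\rho_n\|_{L^\infty_x}$ by combining the $L^\infty$-conservation of $f_n$ with uniform compact-support estimates in $(v,\omega,\eta)$ obtained from Lemma \ref{lemma 4.4}, Lemma \ref{lemma 4.1}, and Proposition \ref{prop 4.3}. The only deviation is a harmless factor of two in the velocity bound, which you already flag as an irrelevant constant (the paper itself elides the same factor coming from $\|F^+_n\|_{L^\infty_{x,\omega}}\le 2\|F_n\|_{L^\infty_x}$).
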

\begin{proof}
From equation \eqref{ptn} for the particle trajectory, we have
\begin{eqnarray}\label{eq 4.8}
|V_{n}(t;0,z)|\leq |v|+\int_0^t \|F^+_{n}(\tau)\|_{L^\infty_{x,\omega}} d\tau \leq |v| +\int^t_0 \|\mathring{f}\|_{L^1_{z}}d\tau\leq R +\|\mathring{f}\|_{L^1_{z}}t,
\end{eqnarray}
and by the previous part, for the given small $\epsilon_0>0$, there is $t_0>0$ such that for $0\leq t\leq t_0$,
\begin{eqnarray}\label{eq 4.9}
|H_{n}(t;0,z)|\leq C(\mathring{f},\epsilon_0)
\end{eqnarray}
and
\[\frac{\epsilon_0}{2}\leq |\Omega_{n}(t;0,z)|\leq \epsilon-\frac{\epsilon_0}{2}. \]

We claim that there is a constant $C>0$ such that
\[\bigg\|\frac{\partial F_n(t)}{\partial x}\bigg\|_{L^\infty_{x}} \leq C,\quad 0\leq t \leq t_0.\]

By the definition of $F_n$, we have
\begin{eqnarray}\label{Frho}\frac{\partial F_{n}(t)}{\partial x}=\frac{1}{2}\frac{\partial}{\partial x}\bigg(\int_{-\infty}^x\rho_n(t,y)dy-\int^{\infty}_x\rho_n(t,y)dy\bigg)=\rho_n(t,x).\end{eqnarray}

This implies
\begin{eqnarray*}
\bigg\|\frac{\partial F_{n}(t)}{\partial x}\bigg\|_{{L^\infty_{x}}}&\leq &\|\rho_{n}(t)\|_{L^\infty_{x}}\\
&=&\Big\|\int\int\int 2f_n(x,v,\omega,\eta) dvd\omega d\eta\Big\|_{L^\infty_{x}}\\
&=&2\Big\|\int \int\int\mathring{f}(Z_n(0;t,z)) dvd\omega d\eta\Big\|_{L^\infty_{x}}\\
&=&2\Big\|\int_{|\eta|\leq P^\eta_n(t)}  \int_{(0,\epsilon)}\int_{|v|\leq P^v_n(t)}  \mathring{f}(Z_n(0;t,z)) dvd\omega d\eta\Big\|_{L^\infty_{x}}\\
&\leq& 8\epsilon P_n^v(t)P_n^\eta(t)\|\mathring{f}\|_{L^\infty_{z}}.
\end{eqnarray*}

By \eqref{eq 4.8} and \eqref{eq 4.9}, for $0\leq t\leq t_0$ and $n\in \mathbb{N}$,
\[ P_n^v(t)\leq R +\|\mathring{f}\|_{L^1_{z}}t_0\]
and
\[P_n^\eta\leq C(\mathring{f},\epsilon_0). \]
Therefore,
\[\bigg\|\frac{\partial F_n(t)}{\partial x}\bigg\|_{L^\infty_{x}} \leq C= C(\mathring{f},\epsilon_0),\quad 0\leq t \leq t_0,\]
where the constant $C$ depends on the initial data $\mathring{f}$ and $\epsilon_0$.

Since we defined $F^\pm$  as
\begin{eqnarray*}
F^{\pm}_n(t,x,\omega)=F_n(t,x+ \omega)\pm F_n(t,x- \omega),
\end{eqnarray*}
we obtain that if $0\leq t\leq t_0$,
\begin{eqnarray*}
\Big\|\frac{\partial F_n^+(t)}{\partial x}\Big\|_{L^\infty_{x,\omega}}+ \Big\|\frac{\partial F_n^+(t)}{\partial \omega}\Big\|_{L^\infty_{x,\omega}}+\Big\|\frac{\partial F_n^-(t)}{\partial x}\Big\|_{L^\infty_{x,\omega}}+ \Big\|\frac{\partial F_n^-(t)}{\partial \omega}\Big\|_{L^\infty_{x,\omega}}<C= C(\mathring{f},\epsilon_0).
\end{eqnarray*}

\end{proof}
\begin{lemma}\label{pb}
Let $[X_n,V_n,\Omega_n,H_n]$ be the solution to \eqref{ptn} and $f_n,\rho_n,F_n$ be defined as in \eqref{inductionD}. For a given $0<2\epsilon_0<\epsilon$, we assume that
\[\Omega_n(0;0,z)=\omega \in [0+\epsilon_0, \epsilon-\epsilon_0]\quad  \mbox{and}\quad H_n(0;0,z)=\eta\in [-R,R].\]
Then
\begin{eqnarray*}|\partial X_n(s;t,z)|+|\partial V_n(s;t,z)|+ |\partial \Omega_n(s;t,z)|+ |\partial H_n(s;t,z)|<C= C(\mathring{f},\epsilon_0),\quad 0\leq s<t<t_0,\end{eqnarray*}
where $t_0>0$ is a small number defined as in Proposition \ref{prop 4.3} and $\partial$ can be chosen  as a differential operator in $\big\{\frac{\partial}{\partial x},\frac{\partial}{\partial v},\frac{\partial}{\partial \omega},\frac{\partial}{\partial \eta}\big\}$. Moreover,
\begin{eqnarray*}
&&\Big\|\frac{\partial^2 F^+_n(t)}{\partial x^2}\Big\|_{L^\infty_{x,\omega}}+ \Big\|\frac{\partial^2 F^+_n(t)}{\partial x\partial \omega}\Big\|_{L^\infty_{x,\omega}}+\Big\|\frac{\partial^2 F^+_n(t)}{\partial \omega^2}\Big\|_{L^\infty_{x,\omega}}
\\
&&\qquad+\Big\|\frac{\partial^2 F^-_n(t)}{\partial x^2}\Big\|_{L^\infty_{x,\omega}}+ \Big\|\frac{\partial^2 F^-_n(t)}{\partial x\partial \omega}\Big\|_{L^\infty_{x,\omega}}+\Big\|\frac{\partial^2 F^-_n(t)}{\partial \omega^2}\Big\|_{L^\infty_{x,\omega}}<C,\quad \mbox{on $0\leq s<t<t_0$.
}\end{eqnarray*}

\end{lemma}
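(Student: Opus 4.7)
The plan is to prove the two bounds separately; the apparent circularity in $n$ (the trajectory $Z_n$ depends on $F_n^\pm$, which depends on $f_n$, which depends on $Z_{n-1}$) is resolved by observing that the trajectory-derivative bound rests only on outputs of Lemma \ref{pFb} and hence is uniform in $n$.

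\textbf{Trajectory derivatives.} For the first bound I would differentiate the ODE system \eqref{ptn} with respect to each component of the initial datum. Fixing $\partial \in \{\partial_x, \partial_v, \partial_\omega, \partial_\eta\}$, the quadruple $(\partial X_n, \partial V_n, \partial \Omega_n, \partial H_n)(s;t,z)$ satisfies a linear, non-autonomous $4\times 4$ ODE whose coefficient matrix involves $\partial_x F_n^\pm, \partial_\omega F_n^\pm$ (evaluated along the trajectory) together with $(F^h)'(\Omega_n(s))$. By Lemma \ref{pFb} the force-field derivatives are bounded on $[0,t_0]$ by $C(\mathring{f},\epsilon_0)$ uniformly in $n$, and by Proposition \ref{prop 4.3} the argument $\Omega_n(s;0,z)$ stays in $[\epsilon_0/2, \epsilon-\epsilon_0/2]$, where the continuous function $(F^h)'$ is bounded in terms of $\epsilon_0$ and $F^h$. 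The initial condition at $s=t$ is a standard basis vector, so a direct Gronwall estimate on $[0,t_0]$ delivers the first claim.

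\textbf{Second derivatives of $F_n^\pm$.} For the second bound I would reduce everything to $\partial_x \rho_n$ using the identity $\partial_x F_n = \rho_n$ from \eqref{Frho}: since $F_n^\pm(t,x,\omega) = F_n(t,x+\omega) \pm F_n(t,x-\omega)$, every second derivative in $(x,\omega)$ of $F_n^\pm$ is a linear combination of $\partial_x \rho_n(t, x\pm\omega)$. To bound $\partial_x \rho_n$, I use the representation $f_n(t,z) = \mathring{f}(Z_{n-1}(0;t,z))$ and differentiate under the integral (justified by compact support of $\mathring{f}$), which yields
\begin{align*}
\partial_x \rho_n(t,x) = 2\int \nabla \mathring{f}(Z_{n-1}(0;t,z)) \cdot \partial_x Z_{n-1}(0;t,z)\, dv\, d\omega\, d\eta.
\end{align*}
The integrand is supported on a set whose $(v,\omega,\eta)$-volume is controlled uniformly by the support radii $P_{n-1}^v, P_{n-1}^\eta$ and $\epsilon$ (bounded on $[0,t_0]$ by Lemma \ref{pFb}), and $|\partial_x Z_{n-1}|$ is bounded by the first part applied at index $n-1$; the base case $n=0$ is immediate because $f_0 \equiv \mathring{f} \in C_c^1$ makes $\partial_x \rho_0 = 2\int \partial_x \mathring{f}\, dv\, d\omega\, d\eta$ bounded a priori.

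The main obstacle is conceptual, namely the index shift from $n$ to $n-1$ in the second part, which threatens a genuine induction on $n$. It is sidestepped by carrying out the trajectory-derivative bound in full generality (uniformly in $n$) before attempting the second part: once the constants in the first part are seen to depend only on $\mathring{f}$ and $\epsilon_0$, the first part at level $n-1$ is merely an instance of an already-established estimate, and the bound on $\partial_x \rho_n$ closes without recursion.
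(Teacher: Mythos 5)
Your proposal matches the paper's proof: differentiate the trajectory ODE to obtain a linear system whose coefficients are bounded via Lemma~\ref{pFb} and Proposition~\ref{prop 4.3}, apply Gronwall for the first bound, and then pass to $\partial^2 F_n^\pm$ through $\partial_x\rho_n$ exactly as the paper does (its phrase ``with the argument used in the proof of Lemma~\ref{pFb}'' is precisely your reduction of $\partial^2 F_n^\pm$ to $\partial_x\rho_n$ via \eqref{Frho} and the representation $f_n=\mathring f(Z_{n-1}(0;t,\cdot))$). Your explicit observation that the first bound is uniform in $n$---so that invoking it at level $n-1$ entails no genuine recursion, with $n=0$ trivial since $f_0=\mathring f$---is exactly the point the paper leaves implicit, and you handle it correctly.
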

\begin{proof}Take the time derivatives of the particle trajectories to obtain
\begin{eqnarray*}
\frac{d}{ds}\partial X_n(s;t,z) &=& \partial V_n(s;t,z),\\
\frac{d}{ds}\partial V_n(s;t,z)& =& \partial_x F^+_n (s,X_n(s;t,z),\Omega_n(s;t,z))\partial X_n(s;t,z)\\&&+\partial_\omega F^+_n (s,X_n(s;t,z),\Omega_n(s;t,z))\partial \Omega_n(s;t,z),\\
\frac{d}{ds}\partial\Omega_n(s;t,z) &=&\partial H_n(s;t,z),\\
 \frac{d}{ds}\partial H_n(s;t,z) &=& \partial_x F^-_n (s,X_n(s;t,z),\Omega_n(s;t,z))\partial X_n(s;t,z)\\&&+\partial_\omega F^-_n (s,X_n(s;t,z),\Omega_n(s;t,z))\partial \Omega_n(s;t,z)+\frac{\partial F^h}{\partial x}(\Omega_n(s;t,z))\partial \Omega_n(s;t,z).
\end{eqnarray*}
Note that by Lemma \ref{pFb} and the result in Proposition \ref{prop 4.3},
\begin{eqnarray*}
\Big\|\frac{\partial F_n^+(t)}{\partial x}\Big\|_{L^\infty_{x,\omega}}, \Big\|\frac{\partial F_n^+(t)}{\partial \omega}\Big\|_{L^\infty_{x,\omega}},\Big\|\frac{\partial F_n^-(t)}{\partial x}\Big\|_{L^\infty_{x,\omega}}, \Big\|\frac{\partial F_n^-(t)}{\partial \omega}\Big\|_{L^\infty_{x,\omega}}<C,\quad 0\leq s<t<t_0,
\end{eqnarray*}
and
\[\frac{\partial F^h}{\partial x}(\Omega_n(s;t,z))<C,\quad 0\leq s<t<t_0.\]

By Gronwell's lemma, we have
\begin{eqnarray*}|\partial X_n(s;t,z)|+|\partial V_n(s;t,z)|+ |\partial \Omega_n(s;t,z)|+ |\partial H_n(s;t,z)|<C,\quad \mbox{on $0\leq s<t<t_0$.
}\end{eqnarray*}
From this result with the argument used in the proof of Lemma \ref{pFb}, it follows that
\begin{eqnarray*}
&&\Big\|\frac{\partial^2 F^+_n(t)}{\partial x^2}\Big\|_{L^\infty_{x,\omega}}+ \Big\|\frac{\partial^2 F^+_n(t)}{\partial x\partial \omega}\Big\|_{L^\infty_{x,\omega}}+\Big\|\frac{\partial^2 F^+_n(t)}{\partial \omega^2}\Big\|_{L^\infty_{x,\omega}}\\
&&\qquad
+\Big\|\frac{\partial^2 F^-_n(t)}{\partial x^2}\Big\|_{L^\infty_{x,\omega}}+ \Big\|\frac{\partial^2 F^-_n(t)}{\partial x\partial \omega}\Big\|_{L^\infty_{x,\omega}}+\Big\|\frac{\partial^2 F^-_n(t)}{\partial \omega^2}\Big\|_{L^\infty_{x,\omega}}<C,\quad \mbox{on $0\leq s<t<t_0$.
}\end{eqnarray*}

\end{proof}

\begin{lemma}\label{lemma 4.8}
Let $f_n,\rho_n,F_n$  be defined as in \eqref{inductionD} and  $[X_n,V_n,\Omega_n,H_n]$ be the solution to \eqref{ptn}. If $0<2\epsilon_0<\epsilon$ and \[\Omega_n(0;0,z)=\omega \in [0+\epsilon_0, \epsilon-\epsilon_0]\quad  \mbox{and}\quad H_n(0;0,z)=\eta\in [-R,R],\]

then
on $0\leq t\leq t_0$,
\begin{eqnarray*}
|f_{n+1}(t)-f_{n}(t)|_{L^\infty_z}\leq C(\mathring{f},\epsilon_0)\int_0^t |f_n(\tau)-f_{n-1}(\tau)|_{L^\infty_z}  d\tau,
\end{eqnarray*}
where $t_0>0$ is a small number defined as in Proposition \ref{prop 4.3}.
\end{lemma}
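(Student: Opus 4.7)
The plan is to exploit the explicit representation $f_{n+1}(t,z) = \mathring{f}(Z_n(0;t,z))$ furnished by Proposition \ref{prop2.1}, which gives
\[
f_{n+1}(t,z) - f_n(t,z) = \mathring{f}(Z_n(0;t,z)) - \mathring{f}(Z_{n-1}(0;t,z)).
\]
Since $\mathring{f} \in C^1_c(\mathcal{D})$ is Lipschitz, the lemma reduces to a Gronwall-type estimate on $|Z_n(0;t,z) - Z_{n-1}(0;t,z)|$ uniformly in $z$ for $0 \leq t \leq t_0$.

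To obtain this, I introduce $\Delta Z_n(s;t,z) := Z_n(s;t,z) - Z_{n-1}(s;t,z)$ and subtract the ODE systems \eqref{ptn} for $Z_n$ and $Z_{n-1}$. The right-hand sides of the equations for $\Delta V_n$ and $\Delta H_n$ involve differences of the form $F_n^{\pm}(s,X_n,\Omega_n) - F_{n-1}^{\pm}(s,X_{n-1},\Omega_{n-1})$, plus $F^h(\Omega_n) - F^h(\Omega_{n-1})$ in the $\Delta H_n$ equation. I split the field differences as
\[
\underbrace{F_n^{\pm}(s,X_n,\Omega_n) - F_n^{\pm}(s,X_{n-1},\Omega_{n-1})}_{(\mathrm{I})} \;+\; \underbrace{F_n^{\pm}(s,X_{n-1},\Omega_{n-1}) - F_{n-1}^{\pm}(s,X_{n-1},\Omega_{n-1})}_{(\mathrm{II})}.
\]
Term $(\mathrm{I})$ is controlled by $C(|\Delta X_n| + |\Delta \Omega_n|)$ via the uniform derivative bounds on $F_n^{\pm}$ from Lemma \ref{pFb}, and $|F^h(\Omega_n) - F^h(\Omega_{n-1})| \leq C|\Delta\Omega_n|$ since $\Omega_n, \Omega_{n-1} \in [\epsilon_0/2, \epsilon - \epsilon_0/2]$ on $[0,t_0]$ by Proposition \ref{prop 4.3} and $F^h$ is $C^2$ on that closed subinterval.

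The crux is term $(\mathrm{II})$. Using $F_n^{\pm}(s,x,\omega) = F_n(s,x+\omega) \pm F_n(s,x-\omega)$ gives $|F_n^{\pm}(s,x,\omega) - F_{n-1}^{\pm}(s,x,\omega)| \leq 2\|F_n(s) - F_{n-1}(s)\|_{L^\infty_x}$, and the signed half-mass form of $F_n$ yields $\|F_n(s) - F_{n-1}(s)\|_{L^\infty_x} \leq \|\rho_n(s) - \rho_{n-1}(s)\|_{L^1_x}$. Because Lemma \ref{pFb} bounds $P^v_n, P^\eta_n$ uniformly in $n$ on $[0,t_0]$ and Proposition \ref{prop 4.3} confines the $\omega$-support to $[\epsilon_0/2,\epsilon-\epsilon_0/2]$, the $(v,\omega,\eta)$-integral defining $\rho_n - \rho_{n-1}$ is taken over a set of $n$-independent finite measure; combined with the uniform compact $x$-support of $f_n,f_{n-1}$ on $[0,t_0]$ coming from $|V_n|\leq R+\|\mathring{f}\|_{L^1_z} t_0$, this gives $\|\rho_n(s) - \rho_{n-1}(s)\|_{L^1_x} \leq C(\mathring{f},\epsilon_0)\, \|f_n(s) - f_{n-1}(s)\|_{L^\infty_z}$.

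Combining, the system for $\Delta Z_n$ satisfies a differential inequality of the form
\[
|\Delta Z_n(s;t,z)| \leq C\int_s^t |\Delta Z_n(\tau;t,z)|\, d\tau + C\int_0^t \|f_n(\tau) - f_{n-1}(\tau)\|_{L^\infty_z}\, d\tau,
\]
so Gronwall's lemma yields $|Z_n(0;t,z) - Z_{n-1}(0;t,z)| \leq C(\mathring{f},\epsilon_0)\int_0^t \|f_n(\tau) - f_{n-1}(\tau)\|_{L^\infty_z}\, d\tau$, and composing with the Lipschitz bound on $\mathring{f}$ produces the claim. The main obstacle I anticipate is certifying the uniform-in-$n$ compact support of $f_n$ so that the passage from $L^1_x$ control of $\rho_n$-differences to $L^\infty_z$ control of $f_n$-differences is quantitative; fortunately the $\Omega_n$-trapping in Proposition \ref{prop 4.3} together with the $V_n,H_n$ bounds in Lemma \ref{pFb} supply exactly this, so the argument closes on $[0,t_0]$.
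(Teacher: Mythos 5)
Your proposal is correct and follows essentially the same route as the paper's own proof: both pass from $f$-differences to particle-trajectory differences via the Lipschitz bound on $\mathring{f}$, bound the field differences using the derivative estimates from Lemma~\ref{pFb} and the signed half-mass formula to get $\|\rho_n-\rho_{n-1}\|_{L^1_x}$, convert to $\|f_n-f_{n-1}\|_{L^\infty_z}$ using the uniform-in-$n$ compact support established by Proposition~\ref{prop 4.3} and Lemma~\ref{pFb}, and close with Gronwall. The only cosmetic difference is your two-term telescoping of $F_n^\pm(s,X_n,\Omega_n)-F_{n-1}^\pm(s,X_{n-1},\Omega_{n-1})$ (vary the point, then vary the index) versus the paper's three-term version (vary the index, then $\omega$, then $x$); this does not affect the estimates or the conclusion.
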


\begin{proof}
 Consider
\[|f_{n+1}(t,z)-f_{n}(t,z)|=|\mathring{f}(Z_{n}(0;t,z))-\mathring{f}(Z_{n-1}(0;t,z))|\leq \|\partial\mathring{f}\|_{L^\infty_z} |Z_{n}(0;t,z)-Z_{n-1}(0;t,z)|.\]

We have
\begin{eqnarray}\label{eq1}
|X_n(s;t,z)-X_{n-1}(s;t,z)|\leq \int_s^t|V_n(\tau;t,z)-V_{n-1}(\tau;t,z)|d\tau,
\end{eqnarray}
and
\begin{align}
\begin{aligned}\nonumber
&\hspace{-3em}|V_n(s;t,z)-V_{n-1}(s;t,z)|\\\leq & \int_s^t|F^+_n(\tau,X_n(\tau;t,z),\Omega_n(\tau;t,z))-F^+_{n-1}(\tau,X_{n-1}(\tau;t,z),\Omega_{n-1}(\tau;t,z))|d\tau\\
=&\int_s^t|F^+_n(\tau,X_n(\tau;t,z),\Omega_n(\tau;t,z))
-F^+_{n-1}(\tau,X_{n}(\tau;t,z),\Omega_{n}(\tau;t,z))
\\&+F^+_{n-1}(\tau,X_{n}(\tau;t,z),\Omega_{n}(\tau;t,z))
-F^+_{n-1}(\tau,X_{n}(\tau;t,z),\Omega_{n-1}(\tau;t,z))
\\&+F^+_{n-1}(\tau,X_{n}(\tau;t,z),\Omega_{n-1}(\tau;t,z))
-F^+_{n-1}(\tau,X_{n-1}(\tau;t,z),\Omega_{n-1}(\tau;t,z))|d\tau
\\
\leq&\int_s^t|F^+_n(\tau,X_n(\tau;t,z),\Omega_n(\tau;t,z))
-F^+_{n-1}(\tau,X_{n}(\tau;t,z),\Omega_{n}(\tau;t,z))|d\tau
\\&+\int_s^t|F^+_{n-1}(\tau,X_{n}(\tau;t,z),\Omega_{n}(\tau;t,z))
-F^+_{n-1}(\tau,X_{n}(\tau;t,z),\Omega_{n-1}(\tau;t,z))|d\tau
\\&+\int_s^t|F^+_{n-1}(\tau,X_{n}(\tau;t,z),\Omega_{n-1}(\tau;t,z))
-F^+_{n-1}(\tau,X_{n-1}(\tau;t,z),\Omega_{n-1}(\tau;t,z))|d\tau
\\=&:I_1+I_2+I_3.
\end{aligned}
\end{align}

We can use Lemma \ref{pFb} to obtain
\begin{eqnarray*}
I_2&\leq&C\int_s^t |\Omega_{n}(\tau;t,z))
-\Omega_{n-1}(\tau;t,z)|d\tau,
\end{eqnarray*}
and
\begin{eqnarray*}
I_3&\leq&C\int_s^t |X_{n}(\tau;t,z)
-X_{n-1}(\tau;t,z)|d\tau.
\end{eqnarray*}
This yields the following estimate:
\begin{align}
\begin{aligned}\label{eq2}
&\hspace{-3em}|V_n(s;t,z)-V_{n-1}(s;t,z)|\\
\leq&\int_s^t|F^+_n(\tau,X_n(\tau;t,z),\Omega_n(\tau;t,z))
-F^+_{n-1}(\tau,X_{n}(\tau;t,z),\Omega_{n}(\tau;t,z))|d\tau
\\&+C\int_s^t |\Omega_{n}(\tau;t,z))
-\Omega_{n-1}(\tau;t,z)|d\tau
+C\int_s^t |X_{n}(\tau;t,z)
-X_{n-1}(\tau;t,z)|d\tau.
\end{aligned}
\end{align}

Similarly, we have
\begin{eqnarray}\label{eq3}|\Omega_n(s;t,z)-\Omega_{n-1}(s;t,z)|\leq \int_s^t|H_n(\tau;t,z)-H_{n-1}(\tau;t,z)|d\tau,\end{eqnarray}
and
\begin{align}
\begin{aligned}\nonumber
&\hspace{-3em}|H_n(s;t,z)-H_{n-1}(s;t,z)|\\\leq& \int_s^t|F^-_n(\tau,X_n(\tau;t,z),\Omega_n(\tau;t,z))-F^-_{n-1}(\tau,X_{n-1}(\tau;t,z),\Omega_{n-1}(\tau;t,z))\\
&
+F^h(\Omega_n(s;t,z))- F^h(\Omega_{n-1}(s;t,z))|d\tau
\\
\leq & \int_s^t|F^-_n(\tau,X_n(\tau;t,z),\Omega_n(\tau;t,z))-F^-_{n-1}(\tau,X_{n-1}(\tau;t,z),\Omega_{n-1}(\tau;t,z))|d\tau \\
&
+\int_s^t|F^h(\Omega_n(s;t,z))- F^h(\Omega_{n-1}(s;t,z))|d\tau\\:=&J_1+J_2.
\end{aligned}
\end{align}

From the result in Lemma \ref{pFb}, it follows that
\begin{align}
\begin{aligned}\nonumber
J_1
=&\int_s^t|F^-_n(\tau,X_n(\tau;t,z),\Omega_n(\tau;t,z))
-F^-_{n-1}(\tau,X_{n}(\tau;t,z),\Omega_{n}(\tau;t,z))
\\&+F^-_{n-1}(\tau,X_{n}(\tau;t,z),\Omega_{n}(\tau;t,z))
-F^-_{n-1}(\tau,X_{n}(\tau;t,z),\Omega_{n-1}(\tau;t,z))
\\&+F^-_{n-1}(\tau,X_{n}(\tau;t,z),\Omega_{n-1}(\tau;t,z))
-F^-_{n-1}(\tau,X_{n-1}(\tau;t,z),\Omega_{n-1}(\tau;t,z))|d\tau
\\
\leq&\int_s^t|F^-_n(\tau,X_n(\tau;t,z),\Omega_n(\tau;t,z))
-F^-_{n-1}(\tau,X_{n}(\tau;t,z),\Omega_{n}(\tau;t,z))|d\tau
\\&+\int_s^t|F^-_{n-1}(\tau,X_{n}(\tau;t,z),\Omega_{n}(\tau;t,z))
-F^-_{n-1}(\tau,X_{n}(\tau;t,z),\Omega_{n-1}(\tau;t,z))|d\tau
\\&+\int_s^t|F^-_{n-1}(\tau,X_{n}(\tau;t,z),\Omega_{n-1}(\tau;t,z))
-F^-_{n-1}(\tau,X_{n-1}(\tau;t,z),\Omega_{n-1}(\tau;t,z))|d\tau
\\
\leq&\int_s^t|F^-_n(\tau,X_n(\tau;t,z),\Omega_n(\tau;t,z))
-F^-_{n-1}(\tau,X_{n}(\tau;t,z),\Omega_{n}(\tau;t,z))|d\tau
\\&+C\int_s^t |\Omega_{n}(\tau;t,z))
-\Omega_{n-1}(\tau;t,z)|d\tau
+C\int_s^t |X_{n}(\tau;t,z)
-X_{n-1}(\tau;t,z)|d\tau.
\end{aligned}
\end{align}

Since $\epsilon_0\leq\Omega_n(s;t,z)\leq \epsilon-\epsilon_0$ on $0\leq s\leq t\leq t_0$,
we have
\begin{eqnarray*}
J_2&=& \int_s^t|F^h(\Omega_n(\tau;t,z))- F^h(\Omega_{n-1}(\tau;t,z))|d\tau\\
&\leq& C\int_s^t|\Omega_n(\tau;t,z)-\Omega_{n-1}(\tau;t,z)|d\tau.
\end{eqnarray*}

Therefore, we have

\begin{align}
\begin{aligned}\label{eq4}
&\hspace{-3em}|H_n(s;t,z)-H_{n-1}(s;t,z)|\\\leq&
\int_s^t|F^-_n(\tau,X_n(\tau;t,z),\Omega_n(\tau;t,z))
-F^-_{n-1}(\tau,X_{n}(\tau;t,z),\Omega_{n}(\tau;t,z))|d\tau
\\&+C\int_s^t |\Omega_{n}(\tau;t,z))
-\Omega_{n-1}(\tau;t,z)|d\tau
+C\int_s^t |X_{n}(\tau;t,z)
-X_{n-1}(\tau;t,z)|d\tau.
\end{aligned}
\end{align}
We summarize the above argument of \eqref{eq1}-\eqref{eq4} as follows.\phantom{\eqref{eq2}\eqref{eq3}}

If $0\leq t\leq t_0$, then
\begin{eqnarray*}
&&\hspace{-3em} |X_n(0;t,z)-X_{n-1}(0;t,z)|+|V_n(0;t,z)-V_{n-1}(0;t,z)|\\
&&+|\Omega_n(0;t,z)-\Omega_{n-1}(0;t,z)|+|H_n(0;t,z)-H_{n-1}(0;t,z)|\\
&\leq&\int_0^t|F^+_n(\tau,X_n(\tau;t,z),\Omega_n(\tau;t,z))
-F^+_{n-1}(\tau,X_{n}(\tau;t,z),\Omega_{n}(\tau;t,z))|d\tau\\
&&+
\int_0^t|F^-_n(\tau,X_n(\tau;t,z),\Omega_n(\tau;t,z))
-F^-_{n-1}(\tau,X_{n}(\tau;t,z),\Omega_{n}(\tau;t,z))|d\tau
\\
 &&+2C\int_0^t |X_{n}(\tau;t,z)
-X_{n-1}(\tau;t,z)|d\tau+\int_0^t|V_n(\tau;t,z)-V_{n-1}(\tau;t,z)|d\tau
\\&&+2C\int_0^t |\Omega_{n}(\tau;t,z))
-\Omega_{n-1}(\tau;t,z)|d\tau
+\int_0^t|H_n(\tau;t,z)-H_{n-1}(\tau;t,z)|d\tau.
\end{eqnarray*}
By  Granwall's lemma, we have
\begin{eqnarray*}
&&\hspace{-3em} |X_n(0;t,z)-X_{n-1}(0;t,z)|+|V_n(0;t,z)-V_{n-1}(0;t,z)|\\
&&+|\Omega_n(0;t,z)-\Omega_{n-1}(0;t,z)|+|H_n(0;t,z)-H_{n-1}(0;t,z)|\\
&\leq&e^{(C+1)t_0}\int_0^t|F^+_n(\tau,X_n(\tau;t,z),\Omega_n(\tau;t,z))
-F^+_{n-1}(\tau,X_{n}(\tau;t,z),\Omega_{n}(\tau;t,z))|d\tau\\
&&+
e^{(C+1)t_0}\int_0^t|F^-_n(\tau,X_n(\tau;t,z),\Omega_n(\tau;t,z))
-F^-_{n-1}(\tau,X_{n}(\tau;t,z),\Omega_{n}(\tau;t,z))|d\tau.
\end{eqnarray*}

Note that
\begin{eqnarray*}
&&\hspace{-3em}F^\pm_n(\tau,X_n(\tau;t,z),\Omega_n(\tau;t,z))
-F^\pm_{n-1}(\tau,X_{n}(\tau;t,z),\Omega_{n}(\tau;t,z))\\
&=&\Big[F_n(\tau,X_n(\tau;t,z)+\Omega_n(\tau;t,z))
-F_{n-1}(\tau,X_{n}(\tau;t,z)+\Omega_{n}(\tau;t,z))\Big]\\
&&\pm\Big[F_n(\tau,X_n(\tau;t,z)-\Omega_n(\tau;t,z))
-F_{n-1}(\tau,X_{n}(\tau;t,z)-\Omega_{n}(\tau;t,z))\Big]
\end{eqnarray*}
and
\begin{eqnarray*}
&&\hspace{-3em}|F_n(\tau,x)
-F_{n-1}(\tau,x)|\\
&=&\bigg|\frac{\int_{-\infty}^x\rho_n(\tau,y)dy-\int^{\infty}_x\rho_n(\tau,y)dy}{2}-\frac{\int_{-\infty}^x\rho_{n-1}(\tau,y)dy-\int^{\infty}_x\rho_{n-1}(\tau,y)dy}{2}\bigg|
\\
&\leq& \frac{1}{2}\bigg| \int_{-\infty}^x \big[\rho_n(\tau,y)-\rho_{n-1}(\tau,y) \big]dy-\int^{\infty}_x \big[\rho_n(\tau,y)-\rho_{n-1}(\tau,y)\big]dy\bigg|
\\&\leq&\|\rho_n(\tau)-\rho_{n-1}(\tau)\|_{L^1_x}.
\end{eqnarray*}

Thus, we have
\begin{eqnarray*}
&&\hspace{-3em} |X_n(0;t,z)-X_{n-1}(0;t,z)|+|V_n(0;t,z)-V_{n-1}(0;t,z)|\\
&&+|\Omega_n(0;t,z)-\Omega_{n-1}(0;t,z)|+|H_n(0;t,z)-H_{n-1}(0;t,z)|\\
&\leq&4e^{(C+1)t_0}\int_0^t \|\rho_n(\tau)-\rho_{n-1}(\tau)\|_{L^1_x}  d\tau.
\end{eqnarray*}

This implies that
\begin{eqnarray*}
|Z_n(0;t,z)-Z_{n-1}(0;t,z)|&\leq &4e^{(C+1)t_0}\int_0^t \|\rho_n(\tau)-\rho_{n-1}(\tau)\|_{L^1_x}  d\tau
\\&\leq &4e^{(C+1)t_0}P_n^x(t_0)\int_0^t \|\rho_n(\tau)-\rho_{n-1}(\tau)\|_{L^\infty_x}  d\tau\\
&\leq &8\epsilon e^{(C+1)t_0}P_n^x(t_0)P_n^v(t_0)P_n^\eta(t_0) \int_0^t \|f_n(\tau)-f_{n-1}(\tau)\|_{L^\infty_z} d\tau.
\end{eqnarray*}

Thus, we have
\begin{eqnarray*}
|f_{n+1}(t,z)-f_{n}(t,z)|&\leq &\|\partial\mathring{f}\|_{L^\infty_z} |Z_{n}(0;t,z)-Z_{n-1}(0;t,z)|\\
&\leq& \|\partial\mathring{f}\|_{L^\infty_z}8\epsilon e^{(C+1)t_0}P^x(t_0)P^v(t_0)P^\eta(t_0) \int_0^t \|f_n(\tau)-f_{n-1}(\tau)\|_{L^\infty_z} d\tau.
\end{eqnarray*}
By Lemma \ref{pFb},
\[P_n^x(t_0)P_n^v(t_0)P_n^\eta(t_0) \leq C(\mathring{f},\epsilon_0).\]
Finally, we summarize the above calculations as follows.

If $0\leq t\leq t_0$, then
\begin{eqnarray*}
|f_{n+1}(t)-f_{n}(t)|_{L^\infty_z}\leq C(\mathring{f},\epsilon_0)\int_0^t |f_n(\tau)-f_{n-1}(\tau)|_{L^\infty_z}  d\tau.
\end{eqnarray*}
\end{proof}
\begin{lemma}\label{lemma 4.9}
For a given $0<2\epsilon_0<\epsilon$, we assume that
\[\Omega_n(0;0,z)=\omega \in [0+\epsilon_0, \epsilon-\epsilon_0]\quad  \mbox{and}\quad H_n(0;0,z)=\eta\in [-R,R].\]

Let $[X_n,V_n,\Omega_n,H_n]$ be the particle trajectory that satisfies \eqref{ptn} with $f_n,\rho_n,F_n$ in \eqref{inductionD}. Then
on $0\leq t\leq t_0$, we have
\begin{eqnarray*}
&&\hspace{-2em}|\partial X_n(s;t,z)-\partial X_{n-1}(s;t,z)|+|\partial V_n(s;t,z)-\partial V_{n-1}(s;t,z)|
\\&&\hspace{-1.5em}
+|\partial \Omega_n(s;t,z)-\partial \Omega_{n-1}(s;t,z)|+|\partial H_n(s;t,z)-\partial H_{n-1}(s;t,z)|
\\&\leq &C \int_s^t\|\rho_n(\tau)-\rho_{n-1}(\tau)\|_{L^\infty_x}d\tau+C\int_s^t|X_n(\tau;t,z)-X_{n-1}(\tau;t,z)|d\tau\\&&+C\int_s^t|\Omega_n(\tau;t,z)-\Omega_{n-1}(\tau;t,z)|d\tau,
\end{eqnarray*}

where $t_0$ is a small number defined as in Proposition \ref{prop 4.3}.
\end{lemma}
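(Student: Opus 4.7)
\smallskip

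\noindent\textbf{Proof proposal for Lemma \ref{lemma 4.9}.} The plan is to differentiate the trajectory system (\ref{ptn}) with respect to the initial phase variable, subtract the $n$-th and $(n-1)$-th equations, and bound each term by a threefold add-and-subtract trick combined with the uniform first- and second-derivative bounds on $F_n^\pm$ from Lemmas \ref{pFb} and \ref{pb}. The key observation that drives the appearance of $\|\rho_n-\rho_{n-1}\|_{L^\infty_x}$ on the right-hand side is the identity $\partial_x F_n(t,x) = \rho_n(t,x)$ from \eqref{Frho}, which immediately yields
\[
|\partial_x F_n^\pm(t,x,\omega) - \partial_x F_{n-1}^\pm(t,x,\omega)| \le 2\|\rho_n(t) - \rho_{n-1}(t)\|_{L^\infty_x},
\]
and analogously for $\partial_\omega F_n^\pm$, since $\partial_\omega F_n^\pm$ is also a sum/difference of $\rho_n$ evaluated at $x\pm\omega$.

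First I would differentiate \eqref{ptn} in $z$, exactly as in the proof of Lemma \ref{pb}, to get a linear ODE system for $(\partial X_n,\partial V_n,\partial\Omega_n,\partial H_n)$ whose coefficients are $\partial_x F_n^\pm$, $\partial_\omega F_n^\pm$ and $(F^h)'(\Omega_n)$. Then subtract the corresponding system at index $n{-}1$ and integrate from $s$ to $t$. A typical difference term, say in the $V$-equation, reads
\[
\partial_x F_n^+(\tau,X_n,\Omega_n)\,\partial X_n - \partial_x F_{n-1}^+(\tau,X_{n-1},\Omega_{n-1})\,\partial X_{n-1},
\]
which I split as
\begin{align*}
&\bigl[\partial_x F_n^+(\tau,X_n,\Omega_n) - \partial_x F_{n-1}^+(\tau,X_n,\Omega_n)\bigr]\,\partial X_n\\
&\quad + \bigl[\partial_x F_{n-1}^+(\tau,X_n,\Omega_n) - \partial_x F_{n-1}^+(\tau,X_{n-1},\Omega_{n-1})\bigr]\,\partial X_n\\
&\quad + \partial_x F_{n-1}^+(\tau,X_{n-1},\Omega_{n-1})\bigl[\partial X_n - \partial X_{n-1}\bigr].
\end{align*}
The first piece is controlled by $\|\rho_n-\rho_{n-1}\|_{L^\infty_x}\cdot|\partial X_n|$; the second by the mean value theorem and the second-order bounds of Lemma \ref{pb} gives $C(|X_n-X_{n-1}|+|\Omega_n-\Omega_{n-1}|)|\partial X_n|$; and the third by the first-order bounds of Lemma \ref{pFb} gives $C|\partial X_n-\partial X_{n-1}|$. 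Since $|\partial X_n|$ is uniformly bounded on $[0,t_0]$ by Lemma \ref{pb}, the constants can be absorbed into a single $C=C(\mathring f,\epsilon_0)$.

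The $H$-equation requires the same triple split for the $F_n^-$ term, plus the extra contribution $(F^h)'(\Omega_n)\partial\Omega_n - (F^h)'(\Omega_{n-1})\partial\Omega_{n-1}$. Because $\Omega_n(\tau;t,z)\in[\epsilon_0/2,\epsilon-\epsilon_0/2]$ for all $n$ by Proposition \ref{prop 4.3}, and $F^h\in C^2$, both $(F^h)'$ and $(F^h)''$ are uniformly bounded on this compact subinterval, so this term is estimated by $C|\Omega_n-\Omega_{n-1}|+C|\partial\Omega_n-\partial\Omega_{n-1}|$ in the usual way. The $X$ and $\Omega$ equations are trivial: one only picks up $\int_s^t |\partial V_n-\partial V_{n-1}|$ and $\int_s^t|\partial H_n-\partial H_{n-1}|$.

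Summing the four estimates and writing $D(s)$ for the sum of the four differences on the left-hand side, I obtain an inequality of the form
\[
D(s) \le C\int_s^t\!\!\|\rho_n-\rho_{n-1}\|_{L^\infty_x}d\tau + C\int_s^t\!\!\bigl(|X_n-X_{n-1}|+|\Omega_n-\Omega_{n-1}|\bigr)d\tau + C\int_s^t D(\tau)\,d\tau,
\]
and a Gronwall application on $[s,t]\subset[0,t_0]$ absorbs the last term into the constant, yielding the stated inequality. The main obstacle I anticipate is purely notational: keeping the three types of contributions (force-difference via $\rho_n-\rho_{n-1}$, argument-shift via $X_n-X_{n-1}$ and $\Omega_n-\Omega_{n-1}$, and $\partial$-difference to be closed by Gronwall) clearly separated across the four coupled equations, and verifying that the quantitative bounds of Lemmas \ref{pFb} and \ref{pb} indeed give a single constant $C(\mathring f,\epsilon_0)$ independent of $n$.
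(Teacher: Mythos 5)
Your proposal matches the paper's proof essentially step for step: differentiate \eqref{ptn} in the initial variable, subtract the $n$-th and $(n-1)$-th systems, telescope the force terms into (i) a force-difference piece controlled by $\|\rho_n-\rho_{n-1}\|_{L^\infty_x}$ via $\partial_x F_n^\pm = \mp\rho_n(\cdot+\omega)\mp\rho_n(\cdot-\omega)$, (ii) an argument-shift piece controlled by the second-derivative bounds of Lemma \ref{pb}, and (iii) a $\partial$-difference piece closed by Gronwall, with the $(F^h)'$ contribution handled by $C^2$-regularity on $[\epsilon_0/2,\epsilon-\epsilon_0/2]$. The only cosmetic deviation is that you merge the $X$-shift and $\Omega$-shift terms (the paper's $K_{12}$ and $K_{13}$) into a single two-variable mean-value step, which is an equivalent packaging of the same estimate.
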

\begin{proof}
Let $\partial$ be one of differential operators $\big\{\frac{\partial}{\partial x},\frac{\partial}{\partial v},\frac{\partial}{\partial \omega},\frac{\partial}{\partial \eta}\big\}
$.
Then by  definition,
\begin{eqnarray*}
|\partial X_n(s;t,z)-\partial X_{n-1}(s;t,z)|\leq \int_s^t|\partial V_n(\tau;t,z)-\partial V_{n-1}(\tau;t,z)|d\tau,
\end{eqnarray*}
\begin{eqnarray*}
&&\hspace{-3em}|\partial V_n(s;t,z)-\partial V_{n-1}(s;t,z)|\\&\leq& \int_s^t|\partial_x F^+_n(\tau,X_n(\tau;t,z),\Omega_n(\tau;t,z))\partial X_n(\tau;t,z)+\partial_\omega F^+_n(\tau,X_n(\tau;t,z),\Omega_n(\tau;t,z))\partial \Omega_n(\tau;t,z)
\\&&-\partial_xF^+_{n-1}(\tau,X_{n-1}(\tau;t,z)+\Omega_{n-1}(\tau;t,z))\partial X_{n-1}(\tau;t,z)
\\&&-\partial_\omega F^+_{n-1}(\tau,X_{n-1}(\tau;t,z)+\Omega_{n-1}(\tau;t,z))\partial\Omega_{n-1}(\tau;t,z)|d\tau
\\&\leq&
 \int_s^t|\partial_x F^+_n(\tau,X_n(\tau;t,z),\Omega_n(\tau;t,z))\partial X_n(\tau;t,z)\\&&-\partial_xF^+_{n-1}(\tau,X_{n-1}(\tau;t,z)+\Omega_{n-1}(\tau;t,z))\partial X_{n-1}(\tau;t,z)|d\tau
 \\
 &&
 +\int_s^t|\partial_\omega F^+_n(\tau,X_n(\tau;t,z),\Omega_n(\tau;t,z))\partial \Omega_n(\tau;t,z)\\&&
-\partial_\omega F^+_{n-1}(\tau,X_{n-1}(\tau;t,z)+\Omega_{n-1}(\tau;t,z))\partial\Omega_{n-1}(\tau;t,z)|d\tau
\\&:=&K_1+K_2.
\end{eqnarray*}
For $K_1$, we have
\begin{eqnarray*}
K_1&= &\int_s^t|\partial_x F^+_n(\tau,X_n(\tau;t,z),\Omega_n(\tau;t,z))\partial X_n(\tau;t,z)\\&&-\partial_xF^+_{n-1}(\tau,X_{n-1}(\tau;t,z)+\Omega_{n-1}(\tau;t,z))\partial X_{n-1}(\tau;t,z)|d\tau\\
&=&\int_s^t|\partial_x F^+_n(\tau,X_n(\tau;t,z),\Omega_n(\tau;t,z))\partial X_n(\tau;t,z)
\\&&\hspace{2em}-\partial_x F^+_{n-1}(\tau,X_n(\tau;t,z),\Omega_n(\tau;t,z))\partial X_n(\tau;t,z)
\\
&&\hspace{3em}
+\partial_x F^+_{n-1}(\tau,X_n(\tau;t,z),\Omega_n(\tau;t,z))\partial X_n(\tau;t,z)\\
&&\hspace{2em}
-\partial_x F^+_{n-1}(\tau,X_{n-1}(\tau;t,z),\Omega_n(\tau;t,z))\partial X_n(\tau;t,z)
\\
&&\hspace{3em}
+\partial_x F^+_{n-1}(\tau,X_{n-1}(\tau;t,z),\Omega_n(\tau;t,z))\partial X_n(\tau;t,z)\\
&&\hspace{2em}
-\partial_x F^+_{n-1}(\tau,X_{n-1}(\tau;t,z),\Omega_{n-1}(\tau;t,z))\partial X_n(\tau;t,z)
\\
&&\hspace{3em}
+\partial_x F^+_{n-1}(\tau,X_{n-1}(\tau;t,z),\Omega_{n-1}(\tau;t,z))\partial X_n(\tau;t,z)
\\&&\hspace{2em}-\partial_xF^+_{n-1}(\tau,X_{n-1}(\tau;t,z)+\Omega_{n-1}(\tau;t,z))\partial X_{n-1}(\tau;t,z)|d\tau.\end{eqnarray*}

Therefore,
\begin{eqnarray*}
K_1&\leq &
\int_s^t|\partial_x F^+_n(\tau,X_n(\tau;t,z),\Omega_n(\tau;t,z))\partial X_n(\tau;t,z)\\&&
-\partial_x F^+_{n-1}(\tau,X_n(\tau;t,z),\Omega_n(\tau;t,z))\partial X_n(\tau;t,z)|d\tau
\\
&&
+\int_s^t|\partial_x F^+_{n-1}(\tau,X_n(\tau;t,z),\Omega_n(\tau;t,z))\partial X_n(\tau;t,z)
\\&&-\partial_x F^+_{n-1}(\tau,X_{n-1}(\tau;t,z),\Omega_n(\tau;t,z))\partial X_n(\tau;t,z)|d\tau
\\
&&
+\int_s^t|\partial_x F^+_{n-1}(\tau,X_{n-1}(\tau;t,z),\Omega_n(\tau;t,z))\partial X_n(\tau;t,z)
\\&&-\partial_x F^+_{n-1}(\tau,X_{n-1}(\tau;t,z),\Omega_{n-1}(\tau;t,z))\partial X_n(\tau;t,z)|d\tau
\\
&&
+\int_s^t|\partial_x F^+_{n-1}(\tau,X_{n-1}(\tau;t,z),\Omega_{n-1}(\tau;t,z))\partial X_n(\tau;t,z)
\\&&-\partial_xF^+_{n-1}(\tau,X_{n-1}(\tau;t,z)+\Omega_{n-1}(\tau;t,z))\partial X_{n-1}(\tau;t,z)|d\tau
\\&:=&K_{11}+K_{12}+K_{13}+K_{14}.
\end{eqnarray*}

By elementary calculation, we have
\begin{eqnarray*}
K_{11}&=& \int_s^t|\partial_x F^+_n(\tau,X_n(\tau;t,z),\Omega_n(\tau;t,z))
-\partial_x F^+_{n-1}(\tau,X_n(\tau;t,z),\Omega_n(\tau;t,z))|~|\partial X_n(\tau;t,z)|d\tau\\
&\leq& \int_s^t|\partial_x F^+_n(\tau,X_n(\tau;t,z),\Omega_n(\tau;t,z))
-\partial_x F^+_{n-1}(\tau,X_n(\tau;t,z),\Omega_n(\tau;t,z))|~|\partial X_n(\tau;t,z)|d\tau
\\&\leq& C\int_s^t|\partial_x F^+_n(\tau,X_n(\tau;t,z),\Omega_n(\tau;t,z))
-\partial_x F^+_{n-1}(\tau,X_n(\tau;t,z),\Omega_n(\tau;t,z))|d\tau.
\end{eqnarray*}
Here we used the result in Lemma \ref{pb}.
Note that
\begin{eqnarray*}
\partial_x F_n(t,x)&=&-\rho_n(t,x),\\
F^{\pm}_n(t,x,\omega)&=&F_n(t,x+ \omega)\pm F_n(t,x- \omega).
\end{eqnarray*}
Therefore,
\[\frac{\partial}{\partial x }F^{\pm}_n=-\rho_n(t,x+\omega)\mp\rho_n(t,x-\omega),\]
and this implies that
\begin{eqnarray*}
K_{11}&\leq& C \int_s^t\|\rho_n(\tau)-\rho_{n-1}(\tau)\|_{L^\infty_x}d\tau.
\end{eqnarray*}
For $K_{12}$,
\begin{eqnarray*}
&&\hspace{-1em}K_{12}\\
&&=\int_s^t|\partial_x F^+_{n-1}(\tau,X_n(\tau;t,z),\Omega_n(\tau;t,z))
-\partial_x F^+_{n-1}(\tau,X_{n-1}(\tau;t,z),\Omega_n(\tau;t,z))|~|\partial X_n(\tau;t,z)|d\tau\\
&&\leq C\int_s^t|\partial_x F^+_{n-1}(\tau,X_n(\tau;t,z),\Omega_n(\tau;t,z))
-\partial_x F^+_{n-1}(\tau,X_{n-1}(\tau;t,z),\Omega_n(\tau;t,z))|d\tau\\
&&\leq C\int_s^t\|\partial_x\partial_x F^+_{n-1}\|_{L^\infty_{x,\omega}} |X_n(\tau;t,z)-X_{n-1}(\tau;t,z)|d\tau.
\end{eqnarray*}
By Lemma \ref{pb}, we have
\begin{eqnarray*}
K_{12}&\leq&C\int_s^t|X_n(\tau;t,z)-X_{n-1}(\tau;t,z)|d\tau.
\end{eqnarray*}
For $K_{13}$, similarly,
\begin{eqnarray*}
K_{13}&\leq&C\int_s^t|\Omega_n(\tau;t,z)-\Omega_{n-1}(\tau;t,z)|d\tau.
\end{eqnarray*}
Clearly,
\begin{eqnarray*}
K_{14}&\leq&C\int_s^t|\partial X_n(\tau;t,z)-\partial X_{n-1}(\tau;t,z)|d\tau.
\end{eqnarray*}
Therefore,
\begin{eqnarray*}
K_1&\leq& C \int_s^t\|\rho_n(\tau)-\rho_{n-1}(\tau)\|_{L^\infty_x}d\tau+C\int_s^t|X_n(\tau;t,z)-X_{n-1}(\tau;t,z)|d\tau\\
&&+C\int_s^t|\Omega_n(\tau;t,z)-\Omega_{n-1}(\tau;t,z)|d\tau+
C\int_s^t|\partial X_n(\tau;t,z)-\partial X_{n-1}(\tau;t,z)|d\tau.
\end{eqnarray*}
By a similar method,
\begin{eqnarray*}
K_2&\leq& C \int_s^t\|\rho_n(\tau)-\rho_{n-1}(\tau)\|_{L^\infty_x}d\tau+C\int_s^t|X_n(\tau;t,z)-X_{n-1}(\tau;t,z)|d\tau\\
&&+C\int_s^t|\Omega_n(\tau;t,z)-\Omega_{n-1}(\tau;t,z)|d\tau+
C\int_s^t|\partial \Omega_n(\tau;t,z)-\partial \Omega_{n-1}(\tau;t,z)|d\tau.
\end{eqnarray*}
Thus, we have
\begin{eqnarray*}
&&\hspace{-3em}|\partial V_n(s;t,z)-\partial V_{n-1}(s;t,z)|\\
&\leq& C  \int_s^t\|\rho_n(\tau)-\rho_{n-1}(\tau)\|_{L^\infty_x}d\tau+C\int_s^t|X_n(\tau;t,z)-X_{n-1}(\tau;t,z)|d\tau\\
&&+C\int_s^t|\Omega_n(\tau;t,z)-\Omega_{n-1}(\tau;t,z)|d\tau+
C\int_s^t|\partial X_n(\tau;t,z)-\partial X_{n-1}(\tau;t,z)|d\tau
\\&& +
C\int_s^t|\partial \Omega_n(\tau;t,z)-\partial \Omega_{n-1}(\tau;t,z)|d\tau
.\end{eqnarray*}

Similarly, we have
\begin{eqnarray*}|\partial\Omega_n(s;t,z)-\partial\Omega_{n-1}(s;t,z)|\leq \int_s^t|\partial H_n(\tau;t,z)-\partial H_{n-1}(\tau;t,z)|d\tau,\end{eqnarray*}
and
\begin{eqnarray*}
&&\hspace{-3em}|\partial H_n(s;t,z)-\partial H_{n-1}(s;t,z)|\\&\leq&   \int_s^t|\partial_x F^-_n(\tau,X_n(\tau;t,z),\Omega_n(\tau;t,z))\partial X_n(\tau;t,z)\\
&&+\partial_\omega F^-_n(\tau,X_n(\tau;t,z),\Omega_n(\tau;t,z))\partial \Omega_n(\tau;t,z)
\\&&-\partial_xF^-_{n-1}(\tau,X_{n-1}(\tau;t,z)+\Omega_{n-1}(\tau;t,z))\partial X_{n-1}(\tau;t,z)\\&&-\partial_\omega F^-_{n-1}(\tau,X_{n-1}(\tau;t,z)+\Omega_{n-1}(\tau;t,z))\partial\Omega_{n-1}(\tau;t,z)|d\tau
\\
&&
+\int_s^t|\frac{d}{dx}F^h(\Omega_n(s;t,z))\partial \Omega_n(s;t,z)- \frac{d}{dx}F^h(\Omega_{n-1}(s;t,z))\partial \Omega_{n-1}(s;t,z)|d\tau
\\
&:=&L_1+L_2.\end{eqnarray*}

By the same argument, we have
\begin{eqnarray*}
L_1&\leq& C \int_s^t\|\rho_n(\tau)-\rho_{n-1}(\tau)\|_{L^\infty_x}d\tau+C\int_s^t|X_n(\tau;t,z)-X_{n-1}(\tau;t,z)|d\tau\\
&&+C\int_s^t|\Omega_n(\tau;t,z)-\Omega_{n-1}(\tau;t,z)|d\tau+
C\int_s^t|\partial X_n(\tau;t,z)-\partial X_{n-1}(\tau;t,z)|d\tau \\&&+
C\int_s^t|\partial \Omega_n(\tau;t,z)-\partial \Omega_{n-1}(\tau;t,z)|d\tau
.\end{eqnarray*}
For $L_2$, we have

\begin{eqnarray*}
L_2&= &\int_s^t|\frac{d}{dx}F^h(\Omega_n(s;t,z))\partial \Omega_n(s;t,z)- \frac{d}{dx}F^h(\Omega_{n-1}(s;t,z))\partial \Omega_{n-1}(s;t,z)|d\tau
\\
&= &\int_s^t|\frac{d}{dx}F^h(\Omega_n(s;t,z))\partial \Omega_n(s;t,z)-\frac{d}{dx}F^h(\Omega_n(s;t,z))\partial \Omega_{n-1}(s;t,z)
\\&&+\frac{d}{dx}F^h(\Omega_n(s;t,z))\partial \Omega_{n-1}(s;t,z)- \frac{d}{dx}F^h(\Omega_{n-1}(s;t,z))\partial \Omega_{n-1}(s;t,z)|d\tau
\\
&\leq &\int_s^t|\frac{d}{dx}F^h(\Omega_n(s;t,z))\partial \Omega_n(s;t,z)-\frac{d}{dx}F^h(\Omega_n(s;t,z))\partial \Omega_{n-1}(s;t,z)|d\tau
\\
&&
+\int_s^t|\frac{d}{dx}F^h(\Omega_n(s;t,z))\partial \Omega_{n-1}(s;t,z)- \frac{d}{dx}F^h(\Omega_{n-1}(s;t,z))\partial \Omega_{n-1}(s;t,z)|d\tau\\
&\leq &\int_s^t|\frac{d}{dx} F^h(\Omega_n(s;t,z))|~|\partial \Omega_n(s;t,z)-\partial\Omega_{n-1}(s;t,z)|d\tau
\\
&&
+\int_s^t|\frac{d}{dx}F^h(\Omega_n(s;t,z))- \frac{d}{dx}F^h(\Omega_{n-1}(s;t,z))|~| \partial\Omega_{n-1}(s;t,z)|d\tau.
\end{eqnarray*}
By Lemma \ref{pb} and Proposition \ref{prop 4.3},

\begin{eqnarray*}
L_2&\leq &C\int_s^t~|\partial \Omega_n(s;t,z)-\partial\Omega_{n-1}(s;t,z)|d\tau+C\int_s^t|\Omega_n(s;t,z))- \Omega_{n-1}(s;t,z)|d\tau
.
\end{eqnarray*}

Thus, we have
\begin{eqnarray*}
&&\hspace{-3em}|\partial H_n(s;t,z)-\partial H_{n-1}(s;t,z)|\\
&\leq & C \int_s^t\|\rho_n(\tau)-\rho_{n-1}(\tau)\|_{L^\infty_x}d\tau+C\int_s^t|X_n(\tau;t,z)-X_{n-1}(\tau;t,z)|d\tau\\
&&+C\int_s^t|\Omega_n(\tau;t,z)-\Omega_{n-1}(\tau;t,z)|d\tau+
C\int_s^t|\partial X_n(\tau;t,z)-\partial X_{n-1}(\tau;t,z)|d\tau \\&&+
C\int_s^t|\partial \Omega_n(\tau;t,z)-\partial \Omega_{n-1}(\tau;t,z)|d\tau.
\end{eqnarray*}

Finally, Granwall's lemma implies the desired result.
\end{proof}

We are ready to prove  that $(f_n)$ converges uniformly to some $f$ on $[0,t_0]\times \bbr\times \bbr\times (0,\epsilon)\times\bbr$.

Lemma \ref{lemma 4.8} and mathematical induction yield that
\begin{eqnarray*}
\|f_{n+1}(t)-f_{n}(t)\|_{L^\infty_z}\leq \sup_{0\leq t\leq t_0} \|f_1(t)-f_{0}(t)\|_{L^\infty_z}\frac{C(\mathring{f})^nt^n}{n!}.
\end{eqnarray*}

Therefore, $f_n$ is an uniformly Cauchy sequence. By  definition, $\rho_n$ and $F_n$ are also uniformly Cauchy. By Lemma \ref{pb}, Lemma \ref{lemma 4.9}, equation \eqref{Frho}, and the previous argument, $\partial Z_n$ converges uniformly. Therefore, $f$ is the classical solution to the main equation. We can also obtain uniqueness and continuation criterion by standard argument.

\begin{theorem}\label{local}Let $\mathcal{D}=\bbr\times\bbr\times(0,\epsilon)\times\bbr$ and  $\mathring{f}\in C^1_c(\mathcal{D})$ be a  nonnegative function. Then there exists $t_0>0$ such that the unique classical solution $f(t,x,v,\omega,\eta)\in C^1((0,t_0)\times\mathcal{D})$ to \eqref{pVP} exists. Moreover, $f(t)$ has a compact support for all $t\in (t,t_0)$ and $f(t)\geq0$.  If $(0,t_0)$ is the maximal existence interval and  if
\[ \sup \{|x|+|v|: (x,v,\omega,\eta)\in supp~f(t),~ t\in (0,t_0)\}<\infty,\]
\[\inf\{\omega: (x,v,\omega,\eta)\in supp~f(t),~ t\in (0,t_0)\}>0,\]
\[\sup\{\omega: (x,v,\omega,\eta)\in supp~f(t),~ t\in (0,t_0)\}<\epsilon,\]
and
\[\sup\{|\eta|: (x,v,\omega,\eta)\in supp~f(t),~ t\in (0,t_0)\}<\infty,\]

then the solution is global.
\end{theorem}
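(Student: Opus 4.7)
The plan is to convert the Picard iteration set up in Section~\ref{sec3} into a genuine fixed point and then read off the extra properties from the solution formula. Starting from $f_0\equiv\mathring{f}$, I would iterate Lemma~\ref{lemma 4.8} to obtain
\[
\|f_{n+1}(t)-f_n(t)\|_{L^\infty_z}\le \Big(\sup_{0\le s\le t_0}\|f_1(s)-f_0(s)\|_{L^\infty_z}\Big)\frac{(Ct_0)^n}{n!}
\]
on $[0,t_0]$, where $t_0>0$ is the time from Proposition~\ref{prop 4.3} that keeps $\Omega_n(s;0,z)\in[\epsilon_0/2,\epsilon-\epsilon_0/2]$. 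Summing gives a uniform Cauchy sequence, hence a continuous limit $f$. The inductively defined support bounds $P^x_n,P^v_n,P^\eta_n,P^{\omega\pm}_n$ are controlled uniformly in $n$ on $[0,t_0]$ by Proposition~\ref{prop 4.3} together with \eqref{eq 4.8}--\eqref{eq 4.9}, so $f(t)$ inherits compact support in $\mathcal{D}$; and the representation $f_{n+1}(t,z)=\mathring{f}(Z_n(0;t,z))$ with $\mathring{f}\ge 0$ passes $f\ge 0$ to the limit.

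To upgrade to a classical solution I would use Lemma~\ref{lemma 4.9}, combined with \eqref{Frho} and the uniform $L^\infty$-convergence of $\rho_n$ already in hand, to close a Gronwall inequality for $|\partial Z_n-\partial Z_{n-1}|$. This yields uniform convergence of $\partial Z_n$ on the relevant support. Differentiating $f_{n+1}(t,z)=\mathring{f}(Z_n(0;t,z))$ via the chain rule and passing to the limit gives $f\in C^1((0,t_0)\times\mathcal{D})$ with $\partial f(t,z)=\partial\mathring{f}(Z(0;t,z))\,\partial Z(0;t,z)$, which satisfies (pVP) pointwise. Uniqueness is standard: two candidate solutions $f,\tilde f$ generate trajectories $Z,\tilde Z$, and replaying the $L^\infty$ estimates of Lemma~\ref{lemma 4.8} on the difference produces a Gronwall inequality with zero initial datum, forcing $f=\tilde f$.

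For the continuation criterion, suppose $(0,t_0)$ is the maximal existence interval and that the four support conditions hold. Then on $\mathrm{supp}\,f(t)$ we have $|v|,|\eta|$ uniformly bounded and $\omega\in[\omega_-,\omega_+]$ with $0<\omega_-\le\omega_+<\epsilon$. The $(x,v,\eta)$-bounds together with Proposition~\ref{prop2.1} control $\|\rho(t)\|_{L^\infty_x}$ and hence $\|F^\pm(t)\|_{L^\infty_{x,\omega}}$ and $\|\partial F^\pm(t)\|_{L^\infty_{x,\omega}}$ via \eqref{Frho}, while the $\omega$-bound keeps $F^h$ and $(F^h)'$ uniformly bounded on the relevant range. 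I would then apply the local existence statement with initial datum $f(t_0-\delta,\cdot)$ in place of $\mathring{f}$, choosing $\epsilon_0$ dictated by $\omega_\pm$ and $R$ by the $(v,\eta)$-bounds; the resulting local time $\tau>0$ depends only on these uniform quantities, not on $\delta$. Taking $\delta<\tau$ extends $f$ past $t_0$, contradicting maximality, so the solution is global.

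The main obstacle is that $t_0$ in Proposition~\ref{prop 4.3} depends on the distance of $\omega$ from the singular endpoints $0$ and $\epsilon$ as well as on $R$ and the strength of $F^-$; a solution whose $\omega$-support crept toward $\{0,\epsilon\}$ would see $F^h$ blow up and the reused $t_0$ shrink to zero. Thus the continuation criterion hinges crucially on the strict inequalities $\inf\omega>0$ and $\sup\omega<\epsilon$, and the delicate bookkeeping point is to verify that the quantitative dependencies of $t_0$ on $\epsilon_0$ and $R$ really are harmless once the hypothesized uniform support bounds are in place.
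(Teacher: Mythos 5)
Your proposal is correct and follows essentially the same route as the paper: it uses the iteration from Section~\ref{sec3} with Lemma~\ref{lemma 4.8} for uniform $L^\infty$-convergence, Proposition~\ref{prop 4.3} together with \eqref{eq 4.8}--\eqref{eq 4.9} for uniform support bounds, and Lemma~\ref{pb}, Lemma~\ref{lemma 4.9} and \eqref{Frho} for the convergence of $\partial Z_n$, with uniqueness and the continuation criterion handled by the standard Gronwall/restart argument that the paper leaves implicit. Your final remark about the dependence of $t_0$ on $\epsilon_0$ and $R$ is precisely the point the hypotheses $\inf\omega>0$ and $\sup\omega<\epsilon$ are designed to neutralize, and your reading of it is correct.
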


\section{Estimates for particle trajectory}\label{sec4}
\setcounter{equation}{0}
In this part, we provide upper bounds of the particle speed. Especially, we obtain control of the relative distance of the atoms through a careful analysis of
their oscillatory motions. These estimates  play a crucial role in the proof of the main theorem. Throughout this section, we  assume that  $[X(s), V(s),\Omega(s), H(s)]$ is the particle trajectory that satisfies the following   ODE system:
\begin{align}
\begin{aligned}\label{particle}
\frac{d}{ds}X(s) &= V(s),\\
\frac{d}{ds}V(s)& = F^+ (s,X(s),\Omega(s)),\\
\frac{d}{ds}\Omega(s) &= H(s),\\
 \frac{d}{ds}H(s) &=
F^-(s,X(s),\Omega(s)) +F^h(\Omega(s)),
\end{aligned}
\end{align}
with initial data
\[  X(0) = x\in \bbr \quad \mbox{and} \quad  V(0) = v \in \bbr, \]
and
\[  \Omega(0) = \omega\in (0,\epsilon) \quad \mbox{and} \quad  H(0) = \eta \in \bbr, \]
where  $F^\pm(t, \cdot,\cdot)$ is a given bounded vector field.
Additionally, we assume that $F^{h}$ satisfies the conditions (H1)-(H4) in Section \ref{intro}. Since the estimates for
$X(s)$ and $V(s)$ are standard, we focus on the control of the relative position $\Omega(s)$ and the oscillatory velocity $H(s)$.
We start with a basic energy type identity, which will be crucial throughout this section.
\begin{lemma}\label{lemma 5.1}
Let  $[X(s), V(s),\Omega(s), H(s)]$ be the particle trajectory satisfying  ODE system \eqref{particle}. Then for any $t_2>t_2\geq0$,
\begin{eqnarray*}
\frac{1}{2}|H(t_2)|^2-\frac{1}{2}|H(t_1)|^2
& =&
\int_{t_1}^{t_2} H(s)F^-(s,X(s),\Omega(s))ds +\int_{\Omega(t_1)}^{\Omega(t_2)}F^h(y)dy.
\end{eqnarray*}

\end{lemma}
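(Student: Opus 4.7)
The plan is to apply the fundamental theorem of calculus to the map $s \mapsto \tfrac{1}{2}|H(s)|^2$, observing that the identity is essentially the statement that the Hooke term $F^h$ derives from a (singular) potential, while $F^-$ contributes an explicit work integral along the trajectory. The proof should be short and fall out in two steps.

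First, I would differentiate $\tfrac{1}{2}|H(s)|^2$ along the trajectory. By the chain rule and the fourth equation of \eqref{particle},
\[
\frac{d}{ds}\Bigl(\tfrac{1}{2}|H(s)|^2\Bigr) = H(s)\,\frac{dH}{ds}(s) = H(s)\,F^-(s,X(s),\Omega(s)) + H(s)\,F^h(\Omega(s)).
\]
Integrating this identity from $t_1$ to $t_2$ then produces $\tfrac{1}{2}|H(t_2)|^2 - \tfrac{1}{2}|H(t_1)|^2$ on the left, together with two integrals on the right, the first of which is already the $F^-$ contribution in the statement.

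Second, I would rewrite the remaining $F^h$ integral as an integral over the relative-distance variable. Using the third equation of \eqref{particle}, namely $\Omega'(s)=H(s)$, and the fact that $\Omega\in C^1$, the substitution $y=\Omega(s)$ gives
\[
\int_{t_1}^{t_2} H(s)\,F^h(\Omega(s))\,ds = \int_{t_1}^{t_2} \Omega'(s)\,F^h(\Omega(s))\,ds = \int_{\Omega(t_1)}^{\Omega(t_2)} F^h(y)\,dy,
\]
which completes the identity. Adding the two contributions yields exactly the formula claimed.

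The only delicate point (which is really a standing hypothesis rather than an obstacle) is that $F^h$ blows up at the endpoints $0$ and $\epsilon$, so the change-of-variables step requires $\Omega(s)\in(0,\epsilon)$ for all $s\in[t_1,t_2]$ so that $F^h\circ\Omega$ is continuous on $[t_1,t_2]$ and both integrals are absolutely convergent. This is implicit in the setup, since $F^h(\Omega(s))$ already appears in the ODE \eqref{particle} defining the trajectory, so no additional work is needed to justify it.
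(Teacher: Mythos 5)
Your proof is correct and follows the same route as the paper's: differentiate $\tfrac{1}{2}|H(s)|^2$ using the fourth equation of \eqref{particle}, integrate over $[t_1,t_2]$, and convert the $F^h$ term via the substitution $y=\Omega(s)$. The closing remark about $\Omega(s)$ staying in $(0,\epsilon)$ is a reasonable observation but adds nothing new, since it is indeed part of the standing setup.
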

\begin{proof}
The fourth equation in \eqref{particle} implies
\begin{eqnarray*}
\frac{1}{2}\frac{d}{ds}|H(s)|^2&= &H(s)\frac{d}{ds}H(s)\\
& =&
H(s)F^-(s,X(s),\Omega(s)) +H(s)F^h(\Omega(s)).
\end{eqnarray*}

We integrate it over $[t_1,t_2]$ to obtain
\begin{eqnarray*}
\frac{1}{2}|H(t_2)|^2-\frac{1}{2}|H(t_1)|^2
& =&
\int_{t_1}^{t_2} H(s)F^-(s,X(s),\Omega(s))ds + \int_{t_1}^{t_2}H(s)F^h(\Omega(s))ds.
\end{eqnarray*}

We note that by the change of variables,
\begin{eqnarray*}
\int_{t_1}^{t_2}H(s)F^h(\Omega(s))ds&=&\int_{t_1}^{t_2}\frac{d}{ds}\Omega(s)F^h(\Omega(s))ds\\&=&\int_{\Omega(t_1)}^{\Omega(t_2)}F^h(y)dy.
\end{eqnarray*}

Therefore, we have
\begin{eqnarray*}
\frac{1}{2}|H(t_2)|^2-\frac{1}{2}|H(t_1)|^2
& =&
\int_{t_1}^{t_2} H(s)F^-(s,X(s),\Omega(s))ds +\int_{\Omega(t_1)}^{\Omega(t_2)}F^h(y)dy.
\end{eqnarray*}

\end{proof}

\begin{lemma}\label{lemma 5.2}
Let  $[X(s), V(s),\Omega(s), H(s)]$ be a  particle trajectory  satisfying  ODE system \eqref{particle} and $\Omega(s)\in (0,\epsilon)$.
We assume that there exists a positive constant $C>0$ such that
\[\|F^-(t,\cdot,\cdot)\|_{L^\infty_{x,\omega}}<C,\quad t\geq 0. \]
If $H(t)\geq 0$, $t\in [t_1,t_2]$,
then
\begin{eqnarray*}
\Big|\int_{t_1}^{t_2} H(s)F^-(s,X(s),\Omega(s))ds\Big|& \leq& C\epsilon.
\end{eqnarray*}
Similarly, if  $H(t)\leq 0$, $t\in [t_1,t_2]$,
then
\begin{eqnarray*}
\Big|\int_{t_1}^{t_2} H(s)F^-(s,X(s),\Omega(s))ds\Big|& \leq& C\epsilon.
\end{eqnarray*}

\end{lemma}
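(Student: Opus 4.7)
The key observation is that the integrand factors through the velocity variable $H(s) = \frac{d}{ds}\Omega(s)$, so the sign hypothesis on $H$ lets me replace a time integral by a distance swept out in $\Omega$, which is bounded trivially by the width $\epsilon$ of the admissible interval $(0,\epsilon)$.

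More concretely, the plan is as follows. First I would pull out the sup-norm of $F^-$ to get the pointwise bound
\begin{equation*}
\Big|\int_{t_1}^{t_2} H(s)\,F^-(s,X(s),\Omega(s))\,ds\Big| \;\leq\; \|F^-\|_{L^\infty_{x,\omega}} \int_{t_1}^{t_2} |H(s)|\,ds.
\end{equation*}
Second, I would use the sign hypothesis: if $H(s)\geq 0$ on $[t_1,t_2]$, then $|H(s)| = H(s)$, while from the third equation of \eqref{particle} we have $H(s) = \frac{d}{ds}\Omega(s)$, so
\begin{equation*}
\int_{t_1}^{t_2} |H(s)|\,ds \;=\; \int_{t_1}^{t_2} \frac{d}{ds}\Omega(s)\,ds \;=\; \Omega(t_2) - \Omega(t_1).
\end{equation*}
Since $\Omega(s)\in(0,\epsilon)$ by assumption, the right-hand side is bounded by $\epsilon$, and combining the two displays yields the claim with constant $C\epsilon$. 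The case $H(s)\leq 0$ is symmetric: then $|H(s)| = -H(s) = -\frac{d}{ds}\Omega(s)$, so the integral equals $\Omega(t_1) - \Omega(t_2)$, which is again at most $\epsilon$ in absolute value.

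There is no real obstacle here; the lemma is essentially a change-of-variables/telescoping observation, and the whole content is the sign hypothesis on $H$, which makes $\Omega$ monotone on $[t_1,t_2]$ and hence turns the time integral of $|H|$ into a total-variation-type quantity bounded by the length of the $\Omega$-interval. I would write out the two cases side by side for clarity but expect the total length to be only a few lines.
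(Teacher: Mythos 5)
Your proof is correct and follows essentially the same approach as the paper: pull out the $L^\infty$ bound on $F^-$, use the sign hypothesis on $H$ to write $\int_{t_1}^{t_2}|H|\,ds = |\Omega(t_2)-\Omega(t_1)|$, and bound this by $\epsilon$ since $\Omega$ stays in $(0,\epsilon)$.
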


\begin{proof}
We assume that
\[H(t)\geq 0,\quad t\in [t_1,t_2].\]

By the assumption, we have
\begin{eqnarray*}
\Big|\int_{t_1}^{t_2} H(s)F^-(s,X(s),\Omega(s))ds\Big|& \leq& \int_{t_1}^{t_2} |H(s)F^-(s,X(s),\Omega(s))|ds\\
& =& \int_{t_1}^{t_2} |H(s)| |F^-(s,X(s),\Omega(s))|ds\\
& \leq&C  \int_{t_1}^{t_2} |H(s)| ds.
\end{eqnarray*}

Since $H(t)\geq 0$,
\begin{eqnarray*}
\Big|\int_{t_1}^{t_2} H(s)F^-(s,X(s),\Omega(s))ds\Big|& \leq&
C  \int_{t_1}^{t_2} H(s) ds\\
& =&C  (\Omega(t_2)-\Omega(t_1))
 =C  |\Omega(t_2)-\Omega(t_1)|
 \\& \leq &C\epsilon.
\end{eqnarray*}
Similarly, for $H(t)\leq 0$, we can obtain
\begin{eqnarray*}
\Big|\int_{t_1}^{t_2} H(s)F^-(s,X(s),\Omega(s))ds\Big|& \leq& C  |\Omega(t_2)-\Omega(t_1)|\leq C\epsilon.
\end{eqnarray*}
\end{proof}

The following two lemmas guarantee the existence of the stopping time $\bar{t}$ after the atom escapes the balancing point of
$F^-$ and $F^h$.
\begin{lemma}\label{lemma 5.4}
We assume that there is $C>0$ such that
\[\|F^-(t,\cdot,\cdot)\|_{L^\infty_{x,\omega}}< C,\quad t\geq 0. \]
Let  $\Omega_M$ be a constant satisfying
\begin{eqnarray}\label{eq 4.16}
 F^h(\Omega_M)=-C,\quad 0<\frac{\epsilon}{2}<\Omega_M<\epsilon.
\end{eqnarray}

We additionally assume that
\[\Omega(t_1)=\Omega_M \]
and  there is $t_1^*>0$ such that
\begin{eqnarray}\label{eq 5.22}
\Omega(t)<\Omega_M,\quad \mbox{on $t\in (t_1^*,t_1)$.}
\end{eqnarray}

Then  $H(t_1)\geq0$. Moreover, if $H(t_1)>0$, then there is $\bar{t}_1>t_1$ such that
\[H(\bar{t}_1)=0\]
 and  $H(t)>0$ on $t\in (t_1,\bar{t}_1)$.

\end{lemma}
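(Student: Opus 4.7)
The plan is to prove the two claims in order. For the first, the hypothesis $\Omega(t)<\Omega_M=\Omega(t_1)$ on $(t_1^*,t_1)$ gives $(\Omega(t_1)-\Omega(t))/(t_1-t)\geq 0$ for $t\in(t_1^*,t_1)$; letting $t\to t_1^-$ and using that $H=d\Omega/ds$ is continuous by the ODE system \eqref{particle} yields $H(t_1)\geq 0$.

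Now suppose $H(t_1)>0$. By continuity of $H$, there exists $\delta_0>0$ with $H(t)>0$ on $(t_1,t_1+\delta_0)$, and on this interval $\Omega$ is strictly increasing so $\Omega(t)>\Omega_M$. Define
\[
T:=\sup\bigl\{t>t_1:\,H(s)>0\text{ for all }s\in(t_1,t)\bigr\}.
\]
Then $T>t_1$, and $H>0$ on $(t_1,T)$ by the supremum definition. The plan is to show $T<\infty$ and then set $\bar t_1:=T$; by continuity of $H$ and the definition of $T$ this will force $H(\bar t_1)=0$, giving exactly the desired conclusion.

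The crux is a strict negative drift on $H$. Fix any $t_2\in(t_1,T)$. Since $\Omega$ is strictly increasing on $(t_1,T)$, for $t\in[t_2,T)$ one has $\Omega(t)\geq\Omega(t_2)=:\Omega_M+\delta$ with $\delta>0$. By the monotonicity condition (H1) and the defining relation \eqref{eq 4.16} for $\Omega_M$,
\[
F^h(\Omega(t))\leq F^h(\Omega_M+\delta)<F^h(\Omega_M)=-C,
\]
while $F^-(s,X(s),\Omega(s))\leq C$ by hypothesis. Setting $\beta:=-\bigl(C+F^h(\Omega_M+\delta)\bigr)>0$, the fourth equation in \eqref{particle} gives
\[
\frac{d}{ds}H(s)\leq -\beta\quad\text{on }[t_2,T),
\]
so $H(t)\leq H(t_2)-\beta(t-t_2)$ there. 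If $T=+\infty$, this forces $H(t)\to-\infty$, contradicting $H>0$ on $(t_1,T)$. Hence $T<\infty$, completing the argument.

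The main conceptual point — and really the only nontrivial one — is recognizing that a uniformly negative drift on $H$ becomes available the moment $\Omega$ has moved a fixed amount past the balancing point $\Omega_M$, and that such a positive excursion is guaranteed automatically once $H>0$ opens up at $t_1$. The remainder is elementary continuity plus a one-line integration, with no delicate estimate required.
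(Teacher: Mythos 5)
Your proof is correct and follows essentially the same route as the paper: for $H(t_1)\geq 0$ you use the one-sided difference quotient of $\Omega$ at $t_1$ (the paper instead argues by contradiction), and for the existence of the stopping time you take the maximal interval where $H>0$ and obtain a uniform negative drift on $H$ from $F^h(\Omega)< -C$ once $\Omega$ exceeds $\Omega_M$, exactly as the paper does (the paper phrases this by first passing to the monotone limits $H^\infty,\Omega^\infty$, whereas you fix an intermediate $t_2$ — a slightly more direct packaging of the same idea). One minor imprecision: the strict inequality $F^h(\Omega_M+\delta)<F^h(\Omega_M)$ does not follow from (H1) alone (monotone decreasing need not be strict); it requires (H2) and (H4) as well — concavity on $(\epsilon/2,\epsilon)$ together with $F^h(\epsilon/2)=0>-C=F^h(\Omega_M)$ rules out any flat segment at $\Omega_M$ — which is why the paper cites (H1)--(H4) collectively at the corresponding step.
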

\begin{proof}
We assume that
\[H(t_1)<0.\]
Then by  continuity, there is $\delta>0$ such that
$H(t)<0$  on $t\in (t_1-\delta,t_1+\delta)$.

Let $t_m=\max\{t_1-\delta,t_1^*\}$.
Thus,
\[\Omega_M-\Omega(t_m)=\Omega(t_1)-\Omega(t_m)=\int^{t_1}_{t_m}H(s)ds<0.\]
However, this is a contradiction with \eqref{eq 5.22}.
Therefore, $H(t_1)\geq0$.

We assume that $H(t_1)>0$. Let $[t_1,\bar{t}_1)$ is the maximal interval such that
\[H(t)>0,\quad t\in [t_1,\bar{t}_1).\]
Therefore, $\Omega(t)$ is increasing on $[t_1,\bar{t}_1)$. This implies that
\begin{eqnarray}\label{eq 5.3}
\Omega(t)\geq \Omega_M, \quad \mbox{on $[t_1,\bar{t}_1)$.}
\end{eqnarray}
 Then for $t_1<t<s<\bar{t}_1$,
\begin{eqnarray*}
H(s)-H(t)=\int_{t}^s \frac{d}{ds}H(s)  ds=\int_{t}^s [F^-(s,X(s),\Omega(s)) +F^h(\Omega(s))]ds.
\end{eqnarray*}
By   \eqref{eq 4.16}, \eqref{eq 5.3}  and   (H1)-(H4),
\begin{eqnarray*}
H(s)-H(t)<0.
\end{eqnarray*}
Thus, $H(\cdot)$ is monotone decreasing on $[t_1,\bar{t}_1)$.

Now, we assume that $\bar{t}_1=\infty$. Then  the following limits exist.
\[H^\infty=\lim_{t\to \infty}H(t),\]
\[\Omega^\infty=\lim_{t\to \infty}\Omega(t)>\Omega_M.\]
However,
\[\limsup_{t\to\infty}\frac{d}{ds}H(s)=  \limsup_{t\to\infty}   [F^-(s,X(s),\Omega(s)) +F^h(\Omega(s))]< -C+C =0.\]
Thus, $H^\infty=-\infty$. This is a contradiction with $H(t)>0$ at $t\in [t,\bar{t}_1)$. Therefore, $\bar{t}_1$ is finite and we obtain the desired result.

\end{proof}

\begin{lemma}\label{lemma 5.4s}We assume that there is $C>0$ such that
\[\|F^-(t,\cdot,\cdot)\|_{L^\infty_{x,\omega}}< C,\quad t\geq 0. \]
Let $\Omega_m$ be a constant satisfying
\[F^h(\Omega_m)=C,\quad 0<\Omega_m<\frac{\epsilon}{2}<\epsilon. \]

If
\[\Omega(t_1)=\Omega_m \]
and  there is $t_1^*>0$ such that
\begin{eqnarray*}
\Omega(t)>\Omega_m,\quad \mbox{
on $t\in (t_1^*,t_1)$,}
\end{eqnarray*}

then  $H(t_1)\leq0$. Moreover, if $H(t_1)<0$, then there is $\bar{t}_1>t_1$ such that
\[H(\bar{t}_1)=0\]
 and  $H(t)<0$ on $t\in (t_1,\bar{t}_1)$.

\end{lemma}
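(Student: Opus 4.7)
The plan is to mirror the proof of Lemma \ref{lemma 5.4} with the signs of $H$ and the comparisons with $\Omega_m$ reversed, using the symmetry that $F^h$ is monotone decreasing with $F^h(\Omega_m)=C>0$ (so that $F^-+F^h$ is nonpositive whenever $\Omega\geq\Omega_m$ and positive whenever $\Omega<\Omega_m$, up to the bound $|F^-|<C$).

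First I would prove $H(t_1)\le 0$ by contradiction. Suppose $H(t_1)>0$. Continuity of $H$ yields some $\delta>0$ with $H(t)>0$ on $(t_1-\delta,t_1+\delta)$. Setting $t_m=\max\{t_1-\delta,t_1^*\}$, the identity
\begin{equation*}
\Omega_m-\Omega(t_m)=\Omega(t_1)-\Omega(t_m)=\int_{t_m}^{t_1}H(s)\,ds>0
\end{equation*}
gives $\Omega(t_m)<\Omega_m$, which contradicts the hypothesis that $\Omega(t)>\Omega_m$ on $(t_1^*,t_1)$. Hence $H(t_1)\le 0$.

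Next, assuming $H(t_1)<0$, I would let $[t_1,\bar t_1)$ be the maximal interval on which $H<0$, and show $\bar t_1$ is finite. On this interval $\Omega(\cdot)$ is strictly decreasing from $\Omega_m$, so $\Omega(s)<\Omega_m$ for $s\in(t_1,\bar t_1)$. By (H1), $F^h(\Omega(s))>F^h(\Omega_m)=C$, and the hypothesis $|F^-|<C$ then gives
\begin{equation*}
\tfrac{d}{ds}H(s)=F^-(s,X(s),\Omega(s))+F^h(\Omega(s))>-C+C=0,
\end{equation*}
so $H$ is strictly increasing on $[t_1,\bar t_1)$. If $\bar t_1=\infty$, then $H(s)\to H^\infty$ and $\Omega(s)\to\Omega^\infty<\Omega_m$, and
\begin{equation*}
\liminf_{s\to\infty}\tfrac{d}{ds}H(s)\ge -C+F^h(\Omega^\infty)>-C+C=0,
\end{equation*}
forcing $H^\infty=+\infty$, which contradicts $H<0$ on $[t_1,\bar t_1)$. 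So $\bar t_1<\infty$, and by maximality $H(\bar t_1)=0$ with $H(t)<0$ on $(t_1,\bar t_1)$.

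There is no genuine obstacle here beyond carefully tracking signs: the lemma is the exact reflection of Lemma \ref{lemma 5.4} across $\omega=\epsilon/2$, exploiting hypothesis (H3) implicitly through the monotonicity (H1). The only point to be a bit careful about is the strict inequality $F^h(\Omega(s))>C$, which requires strict monotonicity of $F^h$; this follows from (H1) together with the $C^2$ regularity assumed on $F^h$, so the argument goes through verbatim with the inequalities flipped.
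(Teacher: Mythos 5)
Your proof is correct and follows exactly the route the paper intends: the paper's own proof of Lemma~\ref{lemma 5.4s} consists of the single sentence ``The proof of this lemma is similar to Lemma~\ref{lemma 5.4}. We omit the proof,'' and you reproduce the argument of Lemma~\ref{lemma 5.4} faithfully with the signs of $H$ and the comparison with $\Omega_m$ reversed. One small caveat: your closing remark that strict monotonicity of $F^h$ ``follows from (H1) together with the $C^2$ regularity'' is not a valid inference ($C^2$ plus weakly monotone does not give strict monotone); however, your key displayed inequality $\tfrac{d}{ds}H(s)=F^-+F^h>-C+C=0$ actually draws its strictness from the hypothesis $\|F^-\|_{L^\infty_{x,\omega}}<C$ rather than from strict monotonicity of $F^h$, so the proof itself is unaffected, and the residual use of strict monotonicity in the $\liminf$ step mirrors the same implicit assumption already present in the paper's proof of Lemma~\ref{lemma 5.4}.
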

\begin{proof}
The proof of this lemma is similar to Lemma \ref{lemma 5.4}. We omit the proof.
\end{proof}

In the following lemma, we measure the potential energy of the atom accumulated until the first stopping time $\bar{t}_1$, and establish the existence of returning time $t^r$.
We also obtain a control on the relative velocity between the atoms escape from and return to the non-autonomous region.
\begin{lemma}\label{lemma 5.6}
We assume that there is $C>0$ such that
\[\|F^-(t,\cdot,\cdot)\|_{L^\infty_{x,\omega}}< C,\quad t\geq 0. \]
Let  $\Omega_M$ be a constant satisfying
\[ F^h(\Omega_M)=-C,\quad 0<\frac{\epsilon}{2}<\Omega_M<\epsilon. \]

We assume that for some $t_1>0$,
\[\Omega(t_1)=\Omega_M , \quad H(t_1)>0,\]
and  there is $t_1^*>0$ such that
\begin{eqnarray*}
\Omega(t)<\Omega_M,\quad \mbox{on $t\in (t_1^*,t_1)$.
}
\end{eqnarray*}

Then
\begin{eqnarray}\label{eq 5.111}
\frac{1}{2}|H(t_1)|^2+I_M-\epsilon C\leq \int^{\frac{\epsilon}{2}}_{\Omega(\bar{t}_1)}F^h(y)dy
 \leq
\frac{1}{2}|H(t_1)|^2+I_M+\epsilon C,
\end{eqnarray}
where
\[I_M=\int_{\Omega_M}^{\frac{\epsilon}{2}}F^h(y)dy, \]
and there is $t_1^r>t_1$ such that
\[\Omega(t_1^r)=\Omega_M, \quad H(t_1^r)<0,\]
and
\[|H(t)|
\leq \big(|H(t_1)|^2+4\epsilon C\big)^{\frac{1}{2}},\quad \mbox{for all $t\in (t_1,t_1^r)$.
}\]

\end{lemma}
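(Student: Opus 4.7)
The plan is to build the proof in three stages: first obtain the stopping time $\bar t_1$, then use the energy identity of Lemma \ref{lemma 5.1} to derive the two-sided bound on the accumulated potential, and finally use this bound to produce the return time $t_1^r$ together with the uniform control on $|H|$.

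Stage 1. Since the hypotheses exactly match Lemma \ref{lemma 5.4}, I immediately extract $\bar t_1>t_1$ with $H(\bar t_1)=0$ and $H(t)>0$ on $(t_1,\bar t_1)$. Consequently $\Omega(\cdot)$ is strictly increasing on $(t_1,\bar t_1)$, so $\Omega(\bar t_1)>\Omega_M$ and $\Omega(t)\in[\Omega_M,\Omega(\bar t_1)]$ throughout that interval. This puts us squarely in the region where $F^h\leq -C<0$, which will be used repeatedly.

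Stage 2. I apply Lemma \ref{lemma 5.1} on $[t_1,\bar t_1]$. Using $H(\bar t_1)=0$ and $\Omega(t_1)=\Omega_M$, rearrangement yields
\[
\int_{\Omega_M}^{\Omega(\bar t_1)}F^h(y)\,dy \;=\; -\tfrac12|H(t_1)|^2 - \int_{t_1}^{\bar t_1} H(s)F^-(s,X(s),\Omega(s))\,ds.
\]
Because $H>0$ on this interval, Lemma \ref{lemma 5.2} bounds the remaining $F^-$-integral by $C\epsilon$ in absolute value. Splitting the potential as
\[
\int_{\Omega(\bar t_1)}^{\epsilon/2}F^h(y)\,dy \;=\; I_M - \int_{\Omega_M}^{\Omega(\bar t_1)} F^h(y)\,dy,
\]
substituting the previous identity gives the claimed two-sided inequality \eqref{eq 5.111} directly.

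Stage 3. For the return time, I argue that while $\Omega(t)>\Omega_M$ one has $F^h(\Omega(t))<-C$ (strict by H1), so $\frac{d}{ds}H=F^-+F^h<0$ strictly. Starting at $H(\bar t_1)=0$, this forces $H$ strictly negative and strictly decreasing past $\bar t_1$, which in turn forces $\Omega$ to decrease. If $\Omega$ never met $\Omega_M$ again, then either $\Omega\to\Omega_\infty>\Omega_M$, in which case $\frac{dH}{ds}$ stays bounded away from $0$ and $H\to-\infty$, contradicting $\Omega$'s convergence, or $\Omega$ decreases past $\Omega_M$ anyway. Hence a finite first return time $t_1^r>\bar t_1$ exists with $\Omega(t_1^r)=\Omega_M$, and by strict monotonicity of $H$ on $(\bar t_1, t_1^r)$ we have $H(t_1^r)<H(\bar t_1)=0$. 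For the bound on $|H(t)|$, I split $(t_1,t_1^r)$ at $\bar t_1$. On $(t_1,\bar t_1)$, Lemma \ref{lemma 5.1} plus $\int_{\Omega_M}^{\Omega(t)}F^h\leq 0$ and Lemma \ref{lemma 5.2} give $\tfrac12|H(t)|^2\leq\tfrac12|H(t_1)|^2+C\epsilon$. On $(\bar t_1,t_1^r)$, Lemma \ref{lemma 5.1} from $\bar t_1$ gives
\[
\tfrac12|H(t)|^2 \;=\; \int_{\bar t_1}^t HF^-\,ds + \int_{\Omega(\bar t_1)}^{\Omega(t)} F^h(y)\,dy,
\]
where the $F^-$-term is again bounded by $C\epsilon$ and the potential term is bounded above by the total potential $-\int_{\Omega_M}^{\Omega(\bar t_1)}F^h\,dy$, which by Stage 2 equals $\tfrac12|H(t_1)|^2+\int_{t_1}^{\bar t_1}HF^-\,ds\leq\tfrac12|H(t_1)|^2+C\epsilon$. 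Combining gives $|H(t)|^2\leq|H(t_1)|^2+4\epsilon C$ on the entire $(t_1,t_1^r)$.

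The main obstacle I anticipate is the existence of $t_1^r$: one must rule out the scenario in which $\Omega$ lingers forever above $\Omega_M$, which requires combining the sign of $F^-+F^h$ with an asymptotic argument on $H$, analogous to the end of Lemma \ref{lemma 5.4}. Once that is in place, the bounds on $|H|$ follow cleanly from the energy identity and the potential estimate already obtained in Stage 2.
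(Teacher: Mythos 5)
Your proposal is correct and follows essentially the same route as the paper: Lemma \ref{lemma 5.4} for the stopping time $\bar t_1$, the energy identity of Lemma \ref{lemma 5.1} combined with the $C\epsilon$ bound from Lemma \ref{lemma 5.2} on $[t_1,\bar t_1]$ and $[\bar t_1,t_1^r]$ for \eqref{eq 5.111} and the return-time estimate. The only cosmetic difference is in the final $|H(t)|$ bound: you apply the energy identity pointwise at an intermediate $t$ on each half-interval, whereas the paper invokes monotonicity of $H$ on $(t_1,t_1^r)$ to reduce to the endpoint values; both give the same constant, and your explicit argument for the finiteness of $t_1^r$ (which the paper only sketches, mirroring the end of Lemma \ref{lemma 5.4}) is sound.
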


\begin{proof}
By Lemma \ref{lemma 5.4}, there is $\bar{t}_1>t_1$ such that
\[H(\bar{t}_1)=0\]
 and  $H(t)>0$ on $t\in (t_1,\bar{t}_1)$.

By Lemmas \ref{lemma 5.1} and \ref{lemma 5.2},
\begin{eqnarray*}
\frac{1}{2}|H(\bar{t}_1)|^2+\int^{\frac{\epsilon}{2}}_{\Omega(\bar{t}_1)}F^h(y)dy
& =&
\frac{1}{2}|H(t_1)|^2+\int_{\Omega(t_1)}^{\frac{\epsilon}{2}}F^h(y)dy+\int_{t_1}^{\bar{t}_1} H(s)F^-(s,X(s),\Omega(s))ds
\end{eqnarray*}
and
\begin{eqnarray*}
\Big|\int_{t_1}^{\bar{t}_1} H(s)F^-(s,X(s),\Omega(s))ds\Big|& \leq& C\epsilon.
\end{eqnarray*}

Since we assume that $F^h(x)> 0$ on $\displaystyle 0< x<\frac{\epsilon}{2}$,  $\displaystyle F^h(x)< 0$ on $\displaystyle \frac{\epsilon}{2}< x<\epsilon$,

\[\int^{\frac{\epsilon}{2}}_{\omega}F^h(y)dy
\geq 0,\quad \mbox{for any $\omega\in (0,\epsilon)$.
}\]

Therefore, we have
\begin{eqnarray}\label{eq 5.7}
\frac{1}{2}|H(t_1)|^2+I_M-\epsilon C\leq \int^{\frac{\epsilon}{2}}_{\Omega(\bar{t}_1)}F^h(y)dy
 \leq
\frac{1}{2}|H(t_1)|^2+I_M+\epsilon C,
\end{eqnarray}
where
\[I_M=\int_{\Omega_M}^{\frac{\epsilon}{2}}F^h(y)dy, \quad \mbox{and $\Omega_M=\Omega(t_1)$.
}\]

Clearly, $H(\bar{t}_1)=0$ and $\Omega_M<\Omega(\bar{t}_1)<\epsilon$.  Moreover, $\ddot{\Omega}(t)=\dot{H}(t)<0$, if $\Omega_M<\Omega(\bar{t}_1)<\epsilon$.
Therefore, there is $t_1^r>\bar{t}_1$ such that
\begin{eqnarray}\label{eq 5.5}\Omega(t_1^r)=\Omega_M
\end{eqnarray}
 and
\begin{eqnarray}\label{eq 5.6} H(t)<0,\quad \mbox{on} \quad t\in (\bar{t}_1, t_1^r).  \end{eqnarray}

By Lemma \ref{lemma 5.1}  and \eqref{eq 5.5},
\begin{align}
\begin{aligned}\label{eq 5.77}
\frac{1}{2}|H(t_1^r)|^2+I_M&=
\frac{1}{2}|H(t_1^r)|^2+\int^{\frac{\epsilon}{2}}_{\Omega(t_1^r)}F^h(y)dy
\\& =
\frac{1}{2}|H(\bar{t}_1)|^2+\int_{\Omega(\bar{t}_1)}^{\frac{\epsilon}{2}}F^h(y)dy+\int_{\bar{t}_1}^{t_1^r} H(s)F^-(s,X(s),\Omega(s))ds.
\end{aligned}
\end{align}
By Lemma \ref{lemma 5.2} and \eqref{eq 5.6},
\begin{eqnarray*}
\Big|\int_{\bar{t}_1}^{t_1^r} H(s)F^-(s,X(s),\Omega(s))ds\Big|& \leq& C\epsilon.
\end{eqnarray*}

Therefore, the above inequality, \eqref{eq 5.7} and \eqref{eq 5.77} imply
\begin{eqnarray*}
\frac{1}{2}|H(t_1^r)|^2+I_M&\leq &
\frac{1}{2}|H(\bar{t}_1)|^2+\int_{\Omega(\bar{t}_1)}^{\frac{\epsilon}{2}}F^h(y)dy+\epsilon C\\
&\leq &
\frac{1}{2}|H(\bar{t}_1)|^2+\frac{1}{2}|H(t_1)|^2+I_M+2\epsilon C\\
&=&\frac{1}{2}|H(t_1)|^2+I_M+2\epsilon C.
\end{eqnarray*}
Here we used $H(\bar{t}_1)=0$.

Thus,  we have
\begin{eqnarray*}
|H(t_1^r)|
\leq \big(|H(t_1)|^2+4\epsilon C\big)^{\frac{1}{2}}.
\end{eqnarray*}
Since $H(\cdot)$ is monotone decreasing and negative on $(t_1,t_1^r)$,
$H(t_1^r)<0$ and
\[|H(t)|
\leq \big(|H(t_1)|^2+4\epsilon C\big)^{\frac{1}{2}},\quad \mbox{for all $t\in (t_1,t_1^r)$.
}\]

\end{proof}

\begin{lemma}\label{lemma 5.6s}
We assume that there is $C>0$ such that
\[\|F^-(t,\cdot,\cdot)\|_{L^\infty_{x,\omega}}< C,\quad t\geq 0. \]
Let $\Omega_m$ and $\Omega_M$ be   constants satisfying
\[F^h(\Omega_m)=C,\quad 0<\Omega_m<\frac{\epsilon}{2}<\epsilon. \]

We assume that for some $t_1>0$,
\[\Omega(t_1)=\Omega_m , \quad H(t_1)<0,\]
and  there is $t_1^*>0$ such that
\begin{eqnarray*}
\Omega(t)>\Omega_m
\end{eqnarray*}
on $t\in (t_1^*,t_1)$.

Then
\begin{eqnarray*}
\frac{1}{2}|H(t_1)|^2+I_m-\epsilon C\leq \int^{\frac{\epsilon}{2}}_{\Omega(\bar{t}_1)}F^h(y)dy
 \leq
\frac{1}{2}|H(t_1)|^2+I_m+\epsilon C,
\end{eqnarray*}
where
\[I_m=\int_{\Omega_m}^{\frac{\epsilon}{2}}F^h(y)dy, \]
and there is $t_1^r>t_1$ such that
\[\Omega(t_1^r)=\Omega_m, \quad H(t_1^r)>0,\]
and
\[|H(t)|
\leq \big(|H(t_1)|^2+4\epsilon C\big)^{\frac{1}{2}},\quad \mbox{for all $t\in (t_1,t_1^r)$.
}\]

\end{lemma}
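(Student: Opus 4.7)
The plan is to mirror the four-step strategy used in Lemma~\ref{lemma 5.6}, with the turning pocket $(\Omega_M,\epsilon)$ replaced by its reflection $(0,\Omega_m)$ about the equilibrium $\omega=\epsilon/2$ and the sign of $H$ reversed throughout. Conditions (H1)--(H4), and in particular the odd symmetry of $F^h$ about $\epsilon/2$, make this reflection essentially automatic.

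First, I invoke Lemma~\ref{lemma 5.4s} to produce a first stopping time $\bar t_1>t_1$ with $H(\bar t_1)=0$ and $H(t)<0$ on $(t_1,\bar t_1)$; in particular $\Omega$ strictly decreases on this interval and stays in $(0,\Omega_m]$. Next I apply the energy identity of Lemma~\ref{lemma 5.1} on $[t_1,\bar t_1]$: since $H$ keeps a constant sign there, Lemma~\ref{lemma 5.2} bounds the $F^-$ contribution by $\epsilon C$. Setting $H(\bar t_1)=0$, and using the splitting
\[
\int_{\Omega(t_1)}^{\Omega(\bar t_1)}F^h(y)\,dy
= I_m - \int_{\Omega(\bar t_1)}^{\epsilon/2}F^h(y)\,dy,
\]
I obtain the claimed two-sided bound on $\int_{\Omega(\bar t_1)}^{\epsilon/2}F^h(y)\,dy$.

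Third, at $t=\bar t_1$ the acceleration $\dot H=F^-+F^h(\Omega(\bar t_1))$ is strictly positive because $\Omega(\bar t_1)<\Omega_m$ makes $F^h(\Omega(\bar t_1))>C\ge|F^-|$ by (H1) and the assumption on $F^-$. Hence $H$ immediately becomes positive and $\Omega$ starts to climb; a contradiction argument identical to the one used in the last paragraph of Lemma~\ref{lemma 5.4}'s proof (assuming the return time is $+\infty$ forces $H\to+\infty$, contradicting $\Omega\in(0,\epsilon)$) then yields a finite first return time $t_1^r>\bar t_1$ with $\Omega(t_1^r)=\Omega_m$ and $H>0$ on $(\bar t_1,t_1^r)$.

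Finally, one more application of Lemma~\ref{lemma 5.1} together with Lemma~\ref{lemma 5.2} on $[\bar t_1,t_1^r]$, combined with the bound from step two, gives
\[
\tfrac12|H(t_1^r)|^2+I_m\le\tfrac12|H(t_1)|^2+I_m+2\epsilon C,
\]
so $|H(t_1^r)|\le(|H(t_1)|^2+4\epsilon C)^{1/2}$. Because $H$ vanishes at $\bar t_1$ and is monotone on each of $(t_1,\bar t_1)$ and $(\bar t_1,t_1^r)$ (by monotonicity of the restoring force, as in Lemma~\ref{lemma 5.4}), the pointwise bound propagates to every $t\in(t_1,t_1^r)$. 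The only real obstacle is a careful bookkeeping of signs, since $\Omega_m$ lies below $\epsilon/2$ rather than above it and $F^h$ is therefore positive (not negative) in the pocket; once this reflection is tracked via (H3)--(H4), the estimates of Lemma~\ref{lemma 5.6} transcribe verbatim, which is why the authors are justified in omitting the details.
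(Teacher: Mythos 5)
Your proof is correct and follows essentially the same route the paper intends: the paper's own proof of Lemma~\ref{lemma 5.6s} is just the one-line remark that Lemma~\ref{lemma 5.4s} combined with the method of Lemma~\ref{lemma 5.6} yields the result, and your write-up carries out precisely that reflection (turning pocket $(0,\Omega_m)$ in place of $(\Omega_M,\epsilon)$, signs of $H$ reversed and $F^h>0$ rather than $<0$ in the pocket, tracked via (H1) and (H3)) with the sign bookkeeping verified at each stage, including the splitting $\int_{\Omega(t_1)}^{\Omega(\bar t_1)}F^h = I_m - \int_{\Omega(\bar t_1)}^{\epsilon/2}F^h$ and the two applications of Lemmas~\ref{lemma 5.1}--\ref{lemma 5.2}.
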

\begin{proof}By using Lemma \ref{lemma 5.4s} with a similar method as in the proof of Lemma \ref{lemma 5.6},
we can easily obtain  this result.
\end{proof}
For the next lemma, we introduce the following integrals related to the oscillatory energy.
\begin{definition}
We define $h_M(x):[\frac{\epsilon}{2},\epsilon)\mapsto [0,\infty)$ by
\[h_M(x)=\int^{\frac{\epsilon}{2}}_xF^h(y)dy,\]

and
 $h_m(x):(0,\frac{\epsilon}{2}]\mapsto [0,\infty)$ by
\[h_m(x)=\int^{\frac{\epsilon}{2}}_xF^h(y)dy.\]

\end{definition}

\begin{remark}
The functions $h_M$ and $h_m$ are monotone and continuous. Therefore, the inverses to $h_M$ and $h_m$ exist and they are monotone and continuous.
\end{remark}

The following lemmas are crucially used to control the large oscillatory velocity in Section \ref{sec5}. (See (\ref{eq 6.3}) and (\ref{f3})).

\begin{lemma}\label{lemma 5.7}
We assume that there is $C>0$ such that
\[\|F^-(t,\cdot,\cdot)\|_{L^\infty_{x,\omega}}< C,\quad t\geq 0. \]
Let  $\Omega_M$ be  a  constant satisfying
\[F^h(\Omega_M)=-C,\quad 0<\frac{\epsilon}{2}<\Omega_M<\epsilon. \]

We assume that for some $t_1>0$,
\[\Omega(t_1)=\Omega_M , \quad H(t_1)>0,\]
and  there is $t_1^*>0$ such that
\begin{eqnarray*}
\Omega(t)<\Omega_M, \quad \mbox{on $t\in (t_1^*,t_1)$.
}
\end{eqnarray*}
Then
\[2\frac{h_M^{-1}\Big(\frac{1}{2}|H(t_1)|^2+I_M-\epsilon C\Big)-\Omega_M}{\sqrt{|H(t_1)|^2+4\epsilon C}}\leq t^r_1- t_1,\]
where $t_1^r$ and $I_M$ are defined as in Lemma \ref{lemma 5.6}.

Moreover, if  $h_M^{-1}\Big(\frac{1}{2}|H(t_1)|^2+I_M-\epsilon C\Big)-\Omega_M>0$, then
\begin{eqnarray*}
\frac{\big||H(t_1^r)|-|H(t_1)|\big|}{t_1^r-t_1}
&\leq&\frac{2\epsilon C}{h_M^{-1}\Big(\frac{1}{2}|H(t_1)|^2+I_M-\epsilon C\Big)-\Omega_M}.
\end{eqnarray*}
\end{lemma}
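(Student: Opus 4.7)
The plan is to combine Lemma 5.1 (the energy identity for $|H|^2$) with a sharp lower bound on the return time $t_1^r - t_1$ that exploits the strict monotonicity of $H$ on each half-period. For brevity write $\Omega^* := h_M^{-1}\!\bigl(\tfrac12|H(t_1)|^2 + I_M - \epsilon C\bigr)$. Lemma \ref{lemma 5.6} already supplies the turning time $\bar t_1$ with $H(\bar t_1)=0$, the return time $t_1^r$, the inequality $\Omega(\bar t_1)\geq \Omega^*$, and the uniform speed bound $|H(t)|\leq \sqrt{|H(t_1)|^2 + 4\epsilon C}$ on $[t_1,t_1^r]$.

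The first inequality is immediate: on $[t_1,\bar t_1]$ one has $H\geq 0$, so
\[
\Omega^* - \Omega_M \;\leq\; \Omega(\bar t_1)-\Omega_M \;=\; \int_{t_1}^{\bar t_1} H(s)\,ds \;\leq\; (\bar t_1 - t_1)\sqrt{|H(t_1)|^2+4\epsilon C},
\]
and a symmetric estimate on $[\bar t_1,t_1^r]$ (where $H\leq 0$ and $\Omega$ decreases from $\Omega(\bar t_1)$ back to $\Omega_M$) supplies the other half. Adding the two gives the first bound.

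For the second inequality I would start from Lemma \ref{lemma 5.1} on $[t_1,t_1^r]$. Because $\Omega(t_1)=\Omega(t_1^r)=\Omega_M$ the $F^h$ integral drops out, and splitting the remaining $\int H F^-\,ds$ at $\bar t_1$ (where $H$ has constant sign on each piece) and applying Lemma \ref{lemma 5.2} yields
\[
\bigl||H(t_1^r)|^2 - |H(t_1)|^2\bigr| \leq 4\epsilon C,\qquad \text{so}\qquad \bigl||H(t_1^r)|-|H(t_1)|\bigr|\;\leq\;\frac{4\epsilon C}{|H(t_1)|+|H(t_1^r)|}.
\]
The key remaining step is to replace the crude lower bound from the first conclusion by a sharper one. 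On $(\Omega_M,\epsilon)$ we have $F^h(\Omega)<-C\leq -|F^-|$, so $\dot H = F^-+F^h<0$ throughout $[t_1,t_1^r]$; thus $H$ strictly decreases from $|H(t_1)|$ to $-|H(t_1^r)|$. Letting $t^*,t^{**}$ be the times at which $\Omega=\Omega^*$ on the ascending and descending branches, monotonicity gives $H(t)\leq |H(t_1)|$ on $[t_1,t^*]$ and $|H(t)|\leq |H(t_1^r)|$ on $[t^{**},t_1^r]$, so integrating $\dot\Omega=\pm|H|$ delivers
\[
t_1^r - t_1 \;\geq\; (\Omega^*-\Omega_M)\,\frac{|H(t_1)|+|H(t_1^r)|}{|H(t_1)|\,|H(t_1^r)|}.
\]
Combining this with the previous display and then applying the AM--GM inequality $|H(t_1)|\,|H(t_1^r)|\leq \tfrac14(|H(t_1)|+|H(t_1^r)|)^2$ makes the common factor $|H(t_1)|+|H(t_1^r)|$ cancel cleanly and produces the claimed bound.

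The principal obstacle is exactly this last step: using the uniform maximum speed $\sqrt{|H(t_1)|^2+4\epsilon C}$ from Lemma \ref{lemma 5.6} in place of the endpoint values would dominate the factor $(|H(t_1)|+|H(t_1^r)|)$ in the wrong way and fail to give a denominator of $\Omega^*-\Omega_M$. Recognising that $\dot H$ is sign-definite on the whole return interval (because $F^h$ dominates $F^-$ strictly for $\Omega>\Omega_M$) is what allows the replacement of the uniform speed by the sharper endpoint values on each half-period and makes the AM--GM cancellation possible.
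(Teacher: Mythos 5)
Your proof of the first inequality is the same as the paper's: bound $\Omega(\bar t_1)-\Omega_M$ from above by the maximal speed times the elapsed time and from below by $h_M^{-1}(\tfrac12|H(t_1)|^2+I_M-\epsilon C)-\Omega_M$ via \eqref{eq 5.111}, on each half of $[t_1,t_1^r]$, and add.

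For the second inequality you take a genuinely different route, and it is correct. The paper's argument is cruder: it bounds $\bigl||H(t_1^r)|-|H(t_1)|\bigr|$ by $\sqrt{|H(t_1)|^2+4\epsilon C}-|H(t_1)|$ (the uniform speed bound of Lemma~\ref{lemma 5.6}), divides by the already-established lower bound $t_1^r-t_1\geq 2(\Omega^*-\Omega_M)/\sqrt{|H(t_1)|^2+4\epsilon C}$, rewrites the numerator as $4\epsilon C/(\sqrt{\cdot}+|H(t_1)|)$, and discards the ratio $\sqrt{\cdot}/(\sqrt{\cdot}+|H(t_1)|)\leq 1$. You instead apply Lemma~\ref{lemma 5.1} on the whole of $[t_1,t_1^r]$, where the potential term vanishes because the endpoints coincide, to get the exact energy balance $\bigl||H(t_1^r)|^2-|H(t_1)|^2\bigr|\leq 4\epsilon C$, and you replace the first conclusion by the sharper endpoint-velocity lower bound
\[
t_1^r-t_1\;\geq\;(\Omega^*-\Omega_M)\Bigl(\tfrac{1}{|H(t_1)|}+\tfrac{1}{|H(t_1^r)|}\Bigr),
\]
valid because $\dot H=F^-+F^h<0$ on all of $[t_1,t_1^r]$ (indeed $F^-<C$ strictly and $F^h\leq -C$ there), so on the ascending branch $H\leq H(t_1)$ and on the descending branch $|H|\leq |H(t_1^r)|$. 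The AM--GM step then yields $\epsilon C/(\Omega^*-\Omega_M)$, a bound tighter by a factor of $2$ than the stated one; the stated inequality follows a fortiori. The extra ingredient you need that the paper does not is the sign-definiteness of $\dot H$ on the whole return interval; in exchange you avoid the lossy step where the paper converts the uniform speed bound into a velocity-difference estimate. Both arguments close, and yours produces a marginally sharper constant.
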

\begin{proof}
Note that
\[\Omega(t)-\Omega_M=\Omega(t)-\Omega(t_1)=\int_{t_1}^{t}H(s)ds.\]
By Lemma \ref{lemma 5.6},
\[\Omega(\bar{t}_1)-\Omega_M\leq \int_{t_1}^{\bar{t}_1}|H(s)|ds\leq (\bar{t}_1-t_1)\sqrt{|H(t_1)|^2+4\epsilon C}.\]
Inequality \eqref{eq 5.111} in Lemma \ref{lemma 5.6} yields that
\[\frac{1}{2}|H(t_1)|^2+I_M-\epsilon C\leq h_M(\Omega(\bar{t}_1)).\]
Since $h_M$ is increasing and continuous,
\[h_M^{-1}\Big(\frac{1}{2}|H(t_1)|^2+I_M-\epsilon C\Big)\leq \Omega(\bar{t}_1).\]
Therefore,
\begin{eqnarray}\label{eq 4.10}
\frac{h_M^{-1}\Big(\frac{1}{2}|H(t_1)|^2+I_M-\epsilon C\Big)-\Omega_M}{\sqrt{|H(t_1)|^2+4\epsilon C}}\leq  \bar{t}_1-t_1.\end{eqnarray}

Similarly, we have
\[\Omega(\bar{t}_1)-\Omega_M\leq \int_{\bar{t}_1}^{t_1^r}|H(s)|ds\leq (t_1^r-\bar{t}_1)\sqrt{|H(t_1)|^2+4\epsilon C},\]
and
\[h_M^{-1}\Big(\frac{1}{2}|H(t_1)|^2+I_M-\epsilon C\Big)\leq \Omega(\bar{t}_1).\]

Therefore,
\begin{eqnarray}\label{eq 4.11}\frac{h_M^{-1}\Big(\frac{1}{2}|H(t_1)|^2+I_M-\epsilon C\Big)-\Omega_M}{\sqrt{|H(t_1)|^2+4\epsilon C}}\leq t^r_1- \bar{t}_1.\end{eqnarray}

By \eqref{eq 4.10} and \eqref{eq 4.11}, we have
\[2\frac{h_M^{-1}\Big(\frac{1}{2}|H(t_1)|^2+I_M-\epsilon C\Big)-\Omega_M}{\sqrt{|H(t_1)|^2+4\epsilon C}}\leq t^r_1- t_1.\]

By Lemma \ref{lemma 5.6},
\begin{eqnarray*}
\frac{\big||H(t_1^r)|-|H(t_1)|\big|}{t_1^r-t_1}\leq\frac{\sqrt{|H(t_1)|^2+4\epsilon C}-|H(t_1)|}{t_1^r-t_1}.
\end{eqnarray*}
Therefore, if  $h_M^{-1}\Big(\frac{1}{2}|H(t_1)|^2+I_M-\epsilon C\Big)-\Omega_M>0$, then we have
\begin{eqnarray*}
\frac{\big||H(t_1^r)|-|H(t_1)|\big|}{t_1^r-t_1}
&\leq&\frac{\sqrt{|H(t_1)|^2+4\epsilon C}-|H(t_1)|}{2}\frac{\sqrt{|H(t_1)|^2+4\epsilon C}}{h_M^{-1}\Big(\frac{1}{2}|H(t_1)|^2+I_M-\epsilon C\Big)-\Omega_M}\\
&\leq&\frac{4\epsilon C}{2(\sqrt{|H(t_1)|^2+4\epsilon C}+|H(t_1)|)}\frac{\sqrt{|H(t_1)|^2+4\epsilon C}}{h_M^{-1}\Big(\frac{1}{2}|H(t_1)|^2+I_M-\epsilon C\Big)-\Omega_M}
\\
&\leq&\frac{2\epsilon C}{h_M^{-1}\Big(\frac{1}{2}|H(t_1)|^2+I_M-\epsilon C\Big)-\Omega_M}
.\end{eqnarray*}

\end{proof}

\begin{lemma}

We assume that there is $C>0$ such that
\[\|F^-(t,\cdot,\cdot)\|_{L^\infty_{x,\omega}}< C,\quad t\geq 0. \]
Let $\Omega_m$  be the constant satisfying
\[F^h(\Omega_m)=C,\quad 0<\Omega_m<\frac{\epsilon}{2}<\epsilon. \]

We assume that
\[\Omega(t_1)=\Omega_m , \quad H(t_1)<0,\]
and  there is $t_1^*>0$ such that
\begin{eqnarray*}
\Omega(t)>\Omega_m,\quad \mbox{on $t\in (t_1^*,t_1)$.
}
\end{eqnarray*}
Then
\[2\frac{\Omega_m-h_m^{-1}\Big(\frac{1}{2}|H(t_1)|^2+I_m-\epsilon C\Big)}{\sqrt{|H(t_1)|^2+4\epsilon C}}\leq t^r_1- t_1,\]
where $t_1^r$ and $I_m$ are defined in Lemma \ref{lemma 5.6s}.

Moreover, if  $\Omega_m-h_m^{-1}\Big(\frac{1}{2}|H(t_1)|^2+I_m-\epsilon C\Big)>0$, then
\begin{eqnarray*}
\frac{\big||H(t_1^r)|-|H(t_1)|\big|}{t_1^r-t_1}
&\leq&\frac{2\epsilon C}{\Omega_m-h_m^{-1}\Big(\frac{1}{2}|H(t_1)|^2+I_m-\epsilon C\Big)}.
\end{eqnarray*}

\end{lemma}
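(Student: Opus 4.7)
The statement is the symmetric counterpart of Lemma \ref{lemma 5.7}, with the roles of $\Omega_M$, $h_M$, and the upper region $(\epsilon/2,\epsilon)$ replaced by $\Omega_m$, $h_m$, and the lower region $(0,\epsilon/2)$. I plan to mirror the proof of Lemma \ref{lemma 5.7}, using Lemma \ref{lemma 5.6s} in place of Lemma \ref{lemma 5.6} as the underlying energy/return-time tool.

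First, I would invoke Lemma \ref{lemma 5.6s} to produce the turning time $\bar t_1>t_1$ at which $H(\bar t_1)=0$ (so $\Omega(\bar t_1)$ is the leftmost point reached), the subsequent return time $t_1^r>\bar t_1$ with $\Omega(t_1^r)=\Omega_m$ and $H(t_1^r)>0$, the potential estimate $h_m(\Omega(\bar t_1))\ge \tfrac12|H(t_1)|^2+I_m-\epsilon C$, and the uniform bound $|H(t)|\le\sqrt{|H(t_1)|^2+4\epsilon C}$ on $(t_1,t_1^r)$. The key monotonicity observation is that $h_m'(x)=-F^h(x)<0$ on $(0,\epsilon/2)$, so $h_m$ is \emph{strictly decreasing} (opposite to $h_M$), and therefore $h_m^{-1}$ is decreasing as well.

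Second, from the potential estimate and the monotonicity of $h_m^{-1}$ I would rearrange to
\[
\Omega(\bar t_1)\le h_m^{-1}\!\Big(\tfrac12|H(t_1)|^2+I_m-\epsilon C\Big),
\qquad
\Omega_m-\Omega(\bar t_1)\ge \Omega_m - h_m^{-1}\!\Big(\tfrac12|H(t_1)|^2+I_m-\epsilon C\Big).
\]
Then, splitting the excursion at $\bar t_1$, on $(t_1,\bar t_1)$ one has $H<0$ and $\Omega_m-\Omega(\bar t_1)=\int_{t_1}^{\bar t_1}|H(s)|\,ds\le(\bar t_1-t_1)\sqrt{|H(t_1)|^2+4\epsilon C}$, while on $(\bar t_1,t_1^r)$ one has $H>0$ and $\Omega_m-\Omega(\bar t_1)=\int_{\bar t_1}^{t_1^r}H(s)\,ds\le(t_1^r-\bar t_1)\sqrt{|H(t_1)|^2+4\epsilon C}$. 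Combining these two lower bounds for $\bar t_1-t_1$ and $t_1^r-\bar t_1$ gives the first claimed inequality.

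Third, for the quantitative bound on the velocity change, I would use the energy estimate of Lemma \ref{lemma 5.6s} in the form $|H(t_1^r)|^2\le|H(t_1)|^2+4\epsilon C$, so that $\bigl||H(t_1^r)|-|H(t_1)|\bigr|\le\sqrt{|H(t_1)|^2+4\epsilon C}-|H(t_1)|$, and rationalize:
\[
\sqrt{|H(t_1)|^2+4\epsilon C}-|H(t_1)|=\frac{4\epsilon C}{\sqrt{|H(t_1)|^2+4\epsilon C}+|H(t_1)|}.
\]
Dividing by $t_1^r-t_1$ and substituting the lower bound on $t_1^r-t_1$ from the first part yields, after cancelling the factor $\sqrt{|H(t_1)|^2+4\epsilon C}$ against a matching factor in the denominator, the desired $\dfrac{2\epsilon C}{\Omega_m-h_m^{-1}(\tfrac12|H(t_1)|^2+I_m-\epsilon C)}$ bound, valid exactly when the denominator is positive. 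I do not anticipate a genuine obstacle: the only care required is the reversed monotonicity of $h_m$ versus $h_M$, which swaps the direction of every inequality involving $h_m^{-1}$ relative to the previous lemma.
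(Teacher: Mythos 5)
Your proposal is correct and is essentially the proof the paper has in mind: the paper's own proof of this lemma is the single sentence ``The proof of this lemma is similar to Lemma \ref{lemma 5.7},'' and your mirror of that argument—using Lemma \ref{lemma 5.6s} in place of Lemma \ref{lemma 5.6} and carefully tracking the reversed monotonicity of $h_m$ (so that $h_m(\Omega(\bar t_1))\ge \tfrac12|H(t_1)|^2+I_m-\epsilon C$ gives $\Omega(\bar t_1)\le h_m^{-1}(\cdot)$, hence $\Omega_m-\Omega(\bar t_1)\ge\Omega_m-h_m^{-1}(\cdot)$)—is exactly what is required. The split at $\bar t_1$, the two lower bounds on $\bar t_1-t_1$ and $t_1^r-\bar t_1$, and the rationalization step $\sqrt{|H(t_1)|^2+4\epsilon C}-|H(t_1)|=\tfrac{4\epsilon C}{\sqrt{|H(t_1)|^2+4\epsilon C}+|H(t_1)|}$ all check out, so no gaps remain.
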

\begin{proof}
The proof of this lemma is similar to Lemma \ref{lemma 5.7}.
\end{proof}

When the atom lies in the non-autonomous chaotic region $[\Omega_m, \Omega_M]$ where the two stretching forces are comparable, descriptive analysis as in the previous is not available. However,
the following rough bound is sufficient in this case:
\begin{lemma}\label{lemma 5.9} We assume that there is $C>0$ such that
\[\|F^-(t,\cdot,\cdot)\|_{L^\infty_{x,\omega}}<C,\quad t\geq 0. \]
Let  $\Omega_m$ and $\Omega_M$ be  constants satisfying
\[F^h(\Omega_m)=F^h(\Omega_M)=C,\quad 0<\Omega_m<\frac{\epsilon}{2}<\Omega_M<\epsilon. \]

If
\begin{eqnarray}\label{assumption}
\Omega_m<\Omega(t)<\Omega_M,\quad \mbox{on}~ t\in [t_1,t_2],
\end{eqnarray}
then\begin{eqnarray*}
|H(t)|\leq 2C (t-t_1)+|H(t_1)|,\quad \mbox{ on $t\in [t_1,t_2]$.
}
\end{eqnarray*}

\end{lemma}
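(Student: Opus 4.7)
The plan is to bound $|H(t)|$ by controlling both forcing terms in the ODE $\frac{d}{ds}H(s) = F^{-}(s,X(s),\Omega(s)) + F^{h}(\Omega(s))$ uniformly on the time interval where $\Omega(s)$ remains trapped in the chaotic region $[\Omega_m,\Omega_M]$. The hypothesis $\|F^{-}(t,\cdot,\cdot)\|_{L^{\infty}_{x,\omega}} < C$ handles the first term immediately, so the only real content is to control $F^{h}(\Omega(s))$ on $[\Omega_m,\Omega_M]$.

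The key observation (noting that the displayed identity $F^{h}(\Omega_m)=F^{h}(\Omega_M)=C$ should be read in accordance with the earlier lemmas as $F^{h}(\Omega_m)=C$ and $F^{h}(\Omega_M)=-C$, which is consistent with (H1) since $F^{h}$ is monotone decreasing) is that on the interval $[\Omega_m,\Omega_M]$ the function $F^{h}$ takes values in $[-C,C]$. Indeed, by (H1), $F^{h}$ is monotone decreasing, so for $\Omega_m \le \omega \le \Omega_M$ one has $-C = F^{h}(\Omega_M) \le F^{h}(\omega) \le F^{h}(\Omega_m) = C$, hence $|F^{h}(\omega)| \le C$.

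Combining this with the assumption \eqref{assumption} that $\Omega(s)\in(\Omega_m,\Omega_M)$ for all $s\in[t_1,t_2]$, we obtain
\[
\Big|\frac{d}{ds}H(s)\Big| \le |F^{-}(s,X(s),\Omega(s))| + |F^{h}(\Omega(s))| \le C + C = 2C, \qquad s \in [t_1,t_2].
\]
Integrating this inequality from $t_1$ to $t$ and applying the triangle inequality $|H(t)| \le |H(t_1)| + \big|\int_{t_1}^{t}H'(s)\,ds\big|$ yields the claim $|H(t)| \le |H(t_1)| + 2C(t-t_1)$.

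There is essentially no obstacle here: the lemma is a direct consequence of the uniform bound on $F^{-}$ and the monotonicity of $F^{h}$. The content of the lemma lies not in its proof but in the fact that it provides exactly the rough growth estimate needed in the chaotic region, to be paired with the sharper descriptive estimates of Lemmas \ref{lemma 5.6}--\ref{lemma 5.7} in the almost autonomous region when the global existence argument of Section \ref{sec5} is assembled.
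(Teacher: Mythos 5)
Your proof is correct and follows essentially the same route as the paper: bound $|F^-|$ by $C$ from the hypothesis, use the monotonicity (H1) to get $|F^h|\le C$ on $(\Omega_m,\Omega_M)$, deduce $\frac{d}{dt}|H(t)|\le 2C$, and integrate. You also correctly noticed and resolved the misprint in the statement (the conditions should read $F^h(\Omega_m)=C$ and $F^h(\Omega_M)=-C$, as in Lemmas \ref{lemma 5.4}--\ref{lemma 5.7}), which the paper's own proof relies on implicitly.
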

\begin{proof}
Note that
\begin{eqnarray*}
 \frac{d}{ds}H(s) =
F^-(s,X(s),\Omega(s)) +F^h(\Omega(s)).
\end{eqnarray*}
Assumption \eqref{assumption} implies that
\begin{eqnarray*}
2|H(t)|\frac{d|H(t)|}{dt}=\frac{d}{dt}|H(t)|^2=2H(t)\frac{d H(t)}{dt}
& \leq &2|H(t)|~|F^-(s,X(s),\Omega(s)) +F^h(\Omega(s))|.
\end{eqnarray*}
Thus,
\begin{eqnarray*}
\frac{d|H(t)|}{dt}
& \leq &|F^-(s,X(s),\Omega(s)) +F^h(\Omega(s))|\leq |F^-(s,X(s),\Omega(s))| +|F^h(\Omega(s))|.
\end{eqnarray*}

On $t\in [t_1,t_2]$, the following holds:
\begin{eqnarray*}
\frac{d|H(t)|}{dt}
& \leq & |F^-(s,X(s),\Omega(s))| +|F^h(\Omega(s))|
\\
& \leq & C +C=2C
.
\end{eqnarray*}
This yields
\begin{eqnarray*}
|H(t)|\leq 2C (t-t_1)+|H(t_1)|.
\end{eqnarray*}

\end{proof}

\section{Existence of the global solution}\label{sec5}
\setcounter{equation}{0}
In this part, we  show that the continuation criterion in Theorem \ref{local} is satisfied under the assumptions of Theorem \ref{main result}, leading to the existence of the global solution to the main system.
We assume that initial data $\mathring{f}$ has compact support, i.e., there is a compact set $\mathcal{D}_c(0)=[-x_M,x_M]\times [-v_M,v_M]\times [\omega_m,\omega_M]\times [-\eta_M,\eta_M]\subset \bbr\times\bbr\times (0,\epsilon)\times \bbr$ such that
\[\mathring{f}(x,v,\omega,\eta)=0, \quad \mbox{for}\quad (x,v,\omega,\eta)  \notin \mathcal{D}_c(0). \]

By the continuation theorem, it suffices to prove that for a given time $T>0$, there is a compact set $\mathcal{D}_c(T)$ such that  the particle trajectory $[X(t), V(t),\Omega(t), H(t)]\in \mathcal{D}_c(T)$, for all  initial data $(x,v,\omega,\eta)\in \mathcal{D}_c$ and $t\in [0,T]$.

Note that by the same argument in Lemma \ref{lemma 4.4}, for given initial data $\mathring{f}$,
\begin{eqnarray*}\|F^{\pm}(t)\|_{L^\infty_{x,\omega}}\leq 2\|\mathring{f}\|_{L^1_{z}}.\end{eqnarray*}
Here we take a sufficiently large constant $C\gg \|\mathring{f}\|_{L^1_{z}}>0$ and constants $\Omega_m$, $\Omega_M$ such that
\[F^h(\Omega_m)=C,\quad F^h(\Omega_M)=-C,\quad 0<\Omega_m<\frac{\epsilon}{2}<\Omega_M<\epsilon, \]
and
\begin{eqnarray}\label{eq 6.2}
[\omega_m,\omega_M]\subset(\Omega_m,\Omega_M).
\end{eqnarray}

Consider the particle trajectory
 \[[X(s;t,z), V(s;t,z)]=[X(s;t,x,v,\omega,\eta), V(s;t,x,v,\omega,\eta)]\]
 and
 \[[\Omega(s;t,z), H(s;t,z)]=[\Omega(s;t,x,v,\omega,\eta), H(s;t,x,v,\omega,\eta)]\] passing through $(x,v)$ and $(\omega,\eta)$  at time $t=0$, respectively, satisfying  the following   ODE system.

\begin{align}
\begin{aligned}\nonumber 
\frac{d}{ds}X(s;0,z) &= V(s;0,z),\\
 \frac{d}{ds}V(s;0,z)& = F^+ (s,X(s;0,z),\Omega(s;0,z)),\\
\frac{d}{ds}\Omega(s;0,z) &= H(s;0,z),\\
 \frac{d}{ds}H(s;0,z) &=
F^-(s,X(s;0,z),\Omega(s;0,z)) +F^h(\Omega(s;0,z)),
\end{aligned}
\end{align}

with initial data
\begin{align}
\begin{aligned}\nonumber
X(0;0,z)& = x\in [-x_M,x_M], \\
V(0;0,z)& = v \in [-v_M,v_M],\\
\Omega(0;0,z)& = \omega\in [\omega_m,\omega_M]\subset (0,\epsilon),\\
 H(0;0,z) &= \eta \in [-\eta_M,\eta_M],
\end{aligned}
\end{align}
where  $F^\pm(t, \cdot,\cdot)$ is the self-consistence vector field satisfying \eqref{pVP}.
Clearly,
\[|X(t;0,z)|\leq x_M+v_M t+ \frac{C}{2}t^2\]
and
\[|V(t;0,z)|\leq v_M+ C t.\]

However, the cases for  $\Omega(t;0,z)$ and $H(t;0,z)$ are non-trivial. We define $S$ as
\[S=\{t>0: \Omega(t;0,z)=\Omega_m\quad \mbox{or}\quad  \Omega_M\}.\]
If $S$ is empty, then $\Omega(t;0,z)\in [\Omega_m,\Omega_M]$, for all $t>0$.
Then
\[\Omega_m\leq \Omega(t;0,z)\leq \Omega_M, \]
and
\begin{align}\label{f1}
|H(t;0,z)|\leq 2C t+\eta_M.
\end{align}
Therefore, we may assume that $S$ is nonempty.  From the continuity of the particle trajectories,  $S$ consists of countable points and closed intervals.

Let $\{t_n,~n\in \bbn\}$ be the set of  isolated points in $S$ and the end points of closed intervals in $S$. We may assume that
 \[t_1<t_2<\ldots<t_i<\ldots~.\] Note that the particle trajectory is continuous. Thus, by  \eqref{eq 6.2}, there is a $t_1>0$ such that
\[\Omega(t_1;0,z)=\Omega_m\quad \mbox{or}\quad  \Omega_M.\]
Without loss of generality, we assume that
\[
\Omega(t_1;0,z)= \Omega_M,
\]
since the case $\Omega(t;0,z)=\Omega_m$ can be treated identically.

This assumption implies that $\Omega(t;0,z)\in [\Omega_m,\Omega_M]$ on  $t\in [0,t_1]$.
By Lemma \ref{lemma 5.9},
\begin{eqnarray*}
|H(t)|\leq 2C t+|H(0)|\leq  2C t+\eta_M,\quad \mbox{on  $t\in [0,t_1]$.}
\end{eqnarray*}

By Lemmas \ref{lemma 5.4} and \ref{lemma 5.6}, there is $t_2>0$ such that $t_2>t_1$ and $t_2\in S$, i.e., \[\Omega(t_2;0,z)=\Omega(t_1;0,z)=\Omega_M.\]

Since $\Omega(t;0,z)\in (\Omega_M,\epsilon)$ on  $t\in [t_1,t_2]$,  if  $h_M^{-1}\Big(\frac{1}{2}|H(t_1)|^2+I_M-\epsilon C\Big)-\Omega_M>0$, then
\begin{eqnarray}\label{eq 6.3}
\frac{\big||H(t_1^r)|-|H(t_1)|\big|}{t_1^r-t_1}
&\leq&\frac{2\epsilon C}{h_M^{-1}\Big(\frac{1}{2}|H(t_1)|^2+I_M-\epsilon C\Big)-\Omega_M}.
\end{eqnarray}

We consider the following two cases.

\begin{itemize}
\item[$\circ$] $|H(t_1)|^2<4\epsilon C$: For this case, we can apply Lemma \ref{lemma 5.6} to obtain
\begin{align}\label{f2}
|H(t)|
\leq \big(|H(t_1)|^2+4\epsilon C\big)^{\frac{1}{2}}\leq  (8\epsilon C)^{\frac{1}{2}},\quad \mbox{for all $t\in [t_1,t_2]$.
}
\end{align}

\item[$\circ$] $|H(t_1)|^2\geq 4\epsilon C$: Since $h_M$ is monotone increasing, we have
\begin{align}\label{f3}
h_M^{-1}\Big(\frac{1}{2}|H(t_1)|^2+I_M-\epsilon C\Big)-\Omega_M\geq h_M^{-1}\Big(\epsilon C+I_M\Big)-\Omega_M>0.
\end{align}

 \eqref{eq 6.3} and \eqref{f3} yield that
\begin{eqnarray*}
|H(t)|
\leq |H(t_1)|+ C_1(t_2-t_1),\quad \mbox{for all $t\in [t_1,t_2]$,}
\end{eqnarray*}
where
\[C_1=\frac{2\epsilon C}{h_M^{-1}\Big(\epsilon C+I_M\Big)-\Omega_M}.\]

\end{itemize}
We can apply this argument for $t_3,t_4,\ldots$~.

Finally, we conclude from (\ref{f1}), (\ref{f2}) and (\ref{f3}) that on $0<t<T$,
\[|H(t;0,z)|\leq \sqrt{(C_2T+\eta_M)^2+4\epsilon C},\]
where $C_2=\max\{2C,C_1\}$.

Now we use Lemma \ref{lemma 5.1}: for $0<t<T$,
\begin{eqnarray*}
\int^{\frac{\epsilon}{2}}_{\Omega(t)}F^h(y)dy
& =&
\int_{0}^{t} H(s)F^-(s,X(s),\Omega(s))ds +\int_{\Omega(0)}^{\frac{\epsilon}{2}}F^h(y)dy+\frac{1}{2}|H(0)|^2-\frac{1}{2}|H(t)|^2\\
& \leq &
\int_{0}^{t} |H(s)|~|F^-(s,X(s),\Omega(s))|ds +\int_{\Omega(0)}^{\frac{\epsilon}{2}}F^h(y)dy+\frac{1}{2}|H(0)|^2\\
& \leq &
C\int_{0}^{T}\sqrt{(C_2T+\eta_M)^2+4\epsilon C} ds +\int_{\Omega(0)}^{\frac{\epsilon}{2}}F^h(y)dy+\frac{1}{2}|H(0)|^2
\\
& \leq &
CT\sqrt{(C_2T+\eta_M)^2+4\epsilon C} +\int_{\Omega(0)}^{\frac{\epsilon}{2}}F^h(y)dy+\frac{1}{2}|H(0)|^2.
\end{eqnarray*}
Thus, there are $\Omega^T_m$, $\Omega^T_M\in \bbr$ such that
for $0< t< T$,
\[0<\Omega^T_m\leq \Omega(t;0,z)\leq \Omega^T_M<\epsilon.\]
Thus, we can conclude that there is the unique global solution to \eqref{pVP} by the continuation criterion in Theorem \ref{local}.

\bibliographystyle{amsplain}

\end{document}